\documentclass[11pt]{amsart}

\usepackage{amsfonts,amsmath,amssymb,amscd}

\usepackage{amsthm}
\usepackage{latexsym}
\usepackage[dvips]{graphicx}
\usepackage[dvips]{psfrag}
\usepackage[dvips]{color}
\usepackage{xypic}
\usepackage[abs]{overpic}
\usepackage{caption}
\usepackage[all]{xy}

\copyrightinfo{2006}{American Mathematical Society}

\newtheorem{theorem}{Theorem}[section]
\newtheorem{lemma}[theorem]{Lemma}

\theoremstyle{definition}

\newtheorem{remark}[theorem]{Remark}

\newcommand{\Nbd}{\operatorname{Nbd}}
\newcommand{\cl}{\operatorname{cl}}

\numberwithin{equation}{section}

\textwidth=5.8in
\voffset=0.25in
\oddsidemargin.25in
\evensidemargin.25in
\marginparwidth=.85in

\begin{document}

\title[Primitive disk complexes and Goeritz groups]{Connected primitive disk complexes and genus two Goeritz groups of lens spaces}

\author{Sangbum Cho}\thanks{The first-named author is supported in part by Basic Science Research Program through the National Research Foundation of Korea(NRF) funded by the Ministry of Education, Science and Technology (2012006520).}

\address{
Department of Mathematics Education \newline
\indent Hanyang University, Seoul 133-791, Korea}
\email{scho@hanyang.ac.kr}

\author{Yuya Koda}\thanks{The second-named author is supported in part by
Grant-in-Aid for Young Scientists (B) (No. 26800028), Japan Society for the Promotion of Science.}

\address{
Department of Mathematics \newline
\indent Hiroshima University, 1-3-1 Kagamiyama, Higashi-Hiroshima, 739-8526, Japan}
\email{ykoda@hiroshima-u.ac.jp}

\subjclass[2000]{Primary 57N10; 57M60.}

\date{\today}

\begin{abstract}
Given a stabilized Heegaard splitting of a $3$-manifold, the primitive disk complex for the splitting is the subcomplex of the disk complex for a handlebody in the splitting spanned by the vertices of the primitive disks. In this work, we study the structure of the primitive disk complex for the genus two Heegaard splitting of each lens space. In particular, we show that the complex for the genus two splitting for the lens space $L(p, q)$ with $1\leq q \leq p/2$ is connected if and only if $p \equiv \pm 1 \pmod q$, and describe the combinatorial structure of each of those complexes. As an application, we obtain a finite presentation of the genus two Goeritz group of each of those lens spaces, the group of isotopy classes of orientation preserving homeomorphisms of the lens space that preserve the genus two Heegaard splitting of it.
\end{abstract}

\maketitle

\section{Introduction}
\label{sec:introduction}

Every closed orientable $3$-manifold can be decomposed into two handlebodies of the same genus, which is called a {\it Heegaard splitting} of the manifold.
The genus of the handlebodies is called the {\it genus} of the splitting.
The $3$-sphere admits a Heegaard splitting of each genus $g \geq 0$, and  lens spaces and $\mathbb S^2 \times \mathbb S^1$ admit Heegaard splittings of each genus $g \geq 1$.

There is a well known simplicial complex, called the {\it disk complex}, for a handlebody and in general for an arbitrary irreducible $3$-manifold with compressible boundary.
The vertices of a disk complex are the isotopy classes of essential disks in the manifold.
When a given a Heegaard splitting is stabilized,
we can define the {\it primitive disk complex} for the splitting, which is the full subcomplex of the disk complex
for a handlebody in the splitting
spanned by the vertices represented by the primitive disks in the handlebody.
Strictly speaking, for each stabilized Heegaard splitting, there are exactly two
primitive disk complexes depending on the choice of a handlebody of the splitting.
However, for all the Heegaard splittings we will consider in this paper, the two primitive disk complexes
are isomorphic. So we simply call it {\it the} primitive disk complex for the splitting.

The first goal of this work is to reveal the combinatorial structure of the primitive disk complex for the genus-$2$ Heegaard splitting of each lens space $L(p, q)$.
For the $3$-sphere and $\mathbb S^2 \times \mathbb S^1$, the structure of the primitive disk complex for the genus-$2$ splitting is well understood from the works \cite{C} and \cite{CK14}. They are both contractible, and further the complex for the $3$-sphere is $2$-dimensional and deformation retracts to a tree in its barycentric subdivision, while the complex for $\mathbb S^2 \times \mathbb S^1$ itself is a tree.
In \cite{C2}, the structure of the primitive disk complex for the genus-$2$ splitting of the lens space $L(p, 1)$ was fully studied.
In addition, a generalized version of a primitive disk complex is also studied in \cite{Kod} for a genus-$2$ handlebody embedded in the $3$-sphere.
In this work, including the case of $L(p, 1)$, we describe the structure of the primitive disk complex for the genus-$2$ splitting in detail for every lens space.
An interesting fact is that not all lens spaces admit connected primitive disk complexes for their genus-$2$ splitting.
In Section \ref{sec:the_structure_of_primitive_disk_complexes}, we find all lens spaces having connected primitive disk complexes for their genus-$2$ splittings (Theorem \ref{thm:contractible}), and then describe the structure of the complex for each lens spaces (Theorem \ref{thm:structure}).

The next goal is to show that the genus-$2$ Goeritz group of the lens space having connected primitive disk complex is finitely presented by giving an explicit presentation of each of them.
Given a Heegaard splitting of a $3$-manifold, the {\it Goeritz group} of the splitting is the group of isotopy classes of orientation preserving homeomorphisms of the manifold that preserve the splitting.
When a genus-$g$ Heegaard splitting for a manifold is unique up to isotopy, we call the Goeritz group of the splitting the {\it genus-$g$ Goeritz group} of the manifold without mentioning a specific splitting of the manifold.
The presentations of those groups have been obtained for some manifolds.
For example, from the works \cite{Go}, \cite{Sc}, \cite{Ak} and \cite{C}, a finite presentation of the genus-$2$ Goeritz group of the $3$-sphere was obtained and from \cite{CK14}, that of $\mathbb S^2 \times \mathbb S^1$ was obtained.
We refer the reader to \cite{Joh10}, \cite{Joh11}, \cite{Sc13}, \cite{CK15}, \cite{CKA}
for finite presentations or finite generating sets of the Goeritz groups of several Heegaard splittings.
For the genus-$2$ Goeriz groups of lens spaces, the finite presentations are obtained only for the lens spaces $L(p, 1)$ in \cite{C2}.
In this work, we show that the genus-$2$ Goeriz group of each lens space having connected primitive disk complex is finitely presented and obtain a presentation of each of them (Theorem \ref{thm:presentations of the Goeritz groups}).
Such a lens space $L(p, q)$ with $1\leq q \leq p/2$ is exactly the one satisfying $p \equiv \pm 1 \pmod q$, which includes the case of $L(p, 1)$.
The basic idea is to investigate the action of the Georitz group on the connected primitive disk complex of each of the lens spaces, and then calculate the isotropy subgroups of its simplices up to the action of the Goeritz group.


We use the standard notation $L = L(p, q)$ for a lens space in standard textbooks.
For example, we refer \cite{Ro} to the reader.
That is, there is a genus one Heegaard splitting of $L$ such that an oriented meridian circle of a solid torus in the splitting is identified with a $(p, q)$-curve on the boundary torus of the other solid torus (fixing oriented longitude and meridian circles of the torus), where $\pi_1(L(p, q))$ is isomorphic to the cyclic group of order $|p|$.
The integer $p$ can be assumed to be positive, and it is well known that two lens spaces $L(p, q)$ and $L(p', q')$ are homeomorphic if and only if $p = p'$ and  $q'q^{\pm 1} \equiv \pm 1 \pmod p$.
Thus we will assume $1 \leq q \leq p/2$ for the lens space $L(p, q)$, or $0 < q < p$ sometimes.
Further, there is a unique integer $q'$ satisfying $1 \leq q' \leq p/2$ and $qq' \equiv \pm 1 \pmod p$, and so, for any other genus one Heegaard splitting of $L(p, q)$, we may assume that an oriented meridian circle of a solid torus of the splitting is identified with a $(p, \bar q)$-curve on the boundary torus of the other solid torus for some $\bar q \in \{q, q', p-q', p-q\}$.

\smallskip

Throughout the paper, $(V, W; \Sigma)$ will denote a genus-$2$ Heegaard splitting of a lens space $L = L(p, q)$.
That is, $V$ and $W$ are genus-$2$ handlebodies such that $V \cup W = L$ and $V \cap W = \partial V = \partial W = \Sigma$ is a genus-$2$ closed orientable surface, which is called a Heegaard surface in $L$.
Any disks in a handlebody are always assumed to be properly embedded, and their intersection is transverse and minimal up to isotopy.
In particular, if a disk $D$ intersects a disk $E$, then $D \cap E$ is a collection of pairwise disjoint arcs that are properly embedded in both $D$ and $E$.
For convenience, we will not distinguish disks (or union of disks) and homeomorphisms from their isotopy
classes in their notation.
Finally, $\Nbd(X)$ will denote a regular neighborhood of $X$ and $\cl(X)$ the closure of $X$ for a subspace $X$ of a polyhedral space, where the ambient space will always be clear from the context.

\section{Primitive disk complexes}
\label{sec:primitive_disk_complexes}
Let $M$ be an irreducible $3$-manifold with compressible boundary.
The {\it disk complex} of $M$ is a simplicial complex defined as follows.
The vertices are the isotopy classes of essential disks in $M$, and a collection of $k+1$ vertices spans a $k$-simplex if and only if it admits a collection of representative disks which are pairwise disjoint.
In particular, if $M$ is a handlebody of genus $g \geq 2$, then the disk complex is $(3g - 4)$-dimensional and is not locally finite.

Let $D$ and $E$ be essential disks in $M$, and suppose that $D$ intersects $E$ transversely and minimally.
Let $C \subset D$ be a disk cut off from $D$ by an outermost arc $\alpha$ of $D \cap E$ in $D$ such that $C \cap E= \alpha$.
We call such a $C$ an {\it outermost subdisk} of $D$ cut off by $D \cap E$.
The arc $\alpha$ cuts $E$ into two disks, say $G$ and $H$.
Then we have two disjoint disks $E_1$ and $E_2$ which are isotopic to disks $G \cup C$ and $H \cup C$ respectively.
We call $E_1$ and $E_2$ the {\it disks from surgery} on $E$ along the outermost subdisk $C$ of $D$.
Since $E$ and $D$ are assumed to intersect minimally, $E_1$ (and $E_2$) is isotopic to neither $E$ nor $D$.
Also at least one of $E_1$ and $E_2$ is non-separating if $D$ is non-separating.
Observe that each of $E_1$ and $E_2$ has fewer arcs of intersection with $D$ than $E$ had since at least the arc $\alpha$ no longer counts.
For an essential disk $D$ in $M$ intersecting  transversely and minimally the union of two disjoint essential disks $E$ and $F$, we define similarly the disks from surgery on $E \cup F$ along an outermost subdisk of $D$ cut off by $D \cap (E \cup F)$.
The following is a key property of a disk complex.

\begin{theorem}
If $\mathcal K$ is a full subcomplex of the disk complex satisfying the following condition, then $\mathcal K$ is contractible.

\begin{itemize}
 \item Let $E$ and $D$ be disks in $M$ representing vertices of $\mathcal K$.
 If they intersect each other transversely and minimally, then at least one of the disks from surgery on $E$ along an outermost subdisk of $D$ cut off by $D \cap E$ represents a vertex of $\mathcal K$.
\end{itemize}
\label{thm:surgery}
\end{theorem}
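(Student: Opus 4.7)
The plan is to show that every continuous map $f \colon S^n \to \mathcal{K}$ is null-homotopic. By simplicial approximation (after possibly passing to a barycentric subdivision) we may assume $f$ is simplicial with respect to some triangulation $K$ of $S^n$, and its image lies in some finite subcomplex of $\mathcal{K}$. Fix once and for all a vertex $E_0$ of $\mathcal{K}$, and define the complexity
\[
c(f) \;=\; \sum_{v \in K^{(0)}} |E_0 \cap f(v)|,
\]
where $|E_0 \cap f(v)|$ is the minimal geometric intersection number between $E_0$ and any representative of $f(v)$. The proof proceeds by induction on $c(f)$.

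For the base case $c(f) = 0$, every vertex of $K$ is mapped to a disk disjoint from $E_0$. Whenever $\{v_0,\dots,v_k\}$ is a simplex of $K$, the disks $f(v_0),\dots,f(v_k)$ are pairwise disjoint, and adjoining $E_0$ keeps the collection pairwise disjoint. Hence $\{E_0, f(v_0),\dots,f(v_k)\}$ spans a simplex in the ambient disk complex, and by the fullness of $\mathcal{K}$ it spans a simplex in $\mathcal{K}$ as well. Therefore $f$ extends simplicially to the cone $K * \{\ast\} \to \mathcal{K}$ by sending the apex $\ast$ to $E_0$, producing the desired null-homotopy.

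For the inductive step, suppose $c(f) > 0$, and choose a vertex $v_0$ of $K$ maximizing $|E_0 \cap f(v_0)|$. Set $D = f(v_0)$, and pick an outermost subdisk $C \subset D$ cut off by $D \cap E_0$ along an outermost arc $\alpha$. The surgery hypothesis furnishes a disk $E^*$ representing a vertex of $\mathcal{K}$, obtained from $E_0$ by surgery along $C$, and the standard push-off argument gives $E^* \cap E_0 = \emptyset$ together with $|E^* \cap D| < |E_0 \cap D|$.

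The main obstacle is that we have tight control over $|E^* \cap E_0|$ and $|E^* \cap D|$ but no a priori control over $|E^* \cap f(w)|$ for the other vertices $w$ of $K$, so one cannot simply redefine $f(v_0) := E^*$. The resolution is the standard local simplicial modification (in the style of McCullough's contractibility argument for the disk complex): one subdivides the star of $v_0$ in $K$ by introducing a new vertex whose image is $E^*$, whereby the new edge from $v_0$ to this vertex maps into $\mathcal{K}$ because $E^*$ and $D$ are disjoint and $\mathcal{K}$ is full. One then iterates the surgery on those neighbors $f(w)$ whose intersection with $E_0$ was not yet reduced, each iteration being justified again by the surgery hypothesis. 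The delicate part is the combinatorial bookkeeping showing that this sequence of moves preserves the homotopy class of $f$ while strictly decreasing $c(f)$ after finitely many steps; once this is in hand, the induction closes and $f$ is null-homotopic, proving $\mathcal{K}$ is contractible.
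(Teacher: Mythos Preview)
The paper does not give its own proof of this theorem; it simply cites \cite{C} and remarks that the argument there carries over. So the comparison is really against the standard surgery argument in that reference, and your attempt has genuine problems beyond the level of bookkeeping.

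First, a concrete error. You claim that the surgered disk $E^*$, obtained from $E_0$ by surgery along the outermost subdisk $C\subset D$, is disjoint from $D$. It is not. The disk $E^*$ is (a push-off of) $G\cup C$, where $G$ is one of the two pieces of $E_0$ cut by $\alpha$; the push-off makes $E^*$ disjoint from $E_0$, but $G$ still meets $D$ in whatever arcs of $E_0\cap D$ lie inside $G$. All one obtains is $|E^*\cap D|<|E_0\cap D|$. So your ``new edge from $v_0$ to this vertex'' need not exist in $\mathcal K$.

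Second, the surgery is pointed the wrong way for the complexity you chose. Since $c(f)=\sum_v|E_0\cap f(v)|$ is measured against the \emph{fixed} disk $E_0$, reducing it requires replacing some $f(v)$ by a disk meeting $E_0$ less---that is, surgering on $D=f(v_0)$ along an outermost subdisk of $E_0$, not the reverse. Producing a disk $E^*$ disjoint from $E_0$ and then inserting it as a new vertex adds $0$ to $c(f)$ but removes nothing; $c(f)$ does not decrease.

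Finally, even with the surgery reversed, the step you label ``delicate combinatorial bookkeeping'' is the entire content of the proof, and you have not supplied it. One must arrange that the replacement disk is disjoint from $f(w)$ for \emph{every} $w$ in the link of $v_0$, so that the star of $v_0$ can be re-coned without leaving $\mathcal K$. The device that makes this work in \cite{C} is to choose the outermost subdisk of $E_0$ not merely with respect to $E_0\cap D$ but with respect to $E_0\cap\bigl(\bigcup_v f(v)\bigr)$; its bounding arc then lies on some $f(w)$, the resulting surgered disk is automatically disjoint from every $f(v)$ adjacent to $w$, and one re-cones the star of $w$ rather than of $v_0$. Without this idea the induction does not close.
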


In \cite{C}, the above theorem is proved in the case where $M$ is a handlebody, but the proof is still valid for an arbitrary irreducible manifold with compressible boundary. From the theorem, we see that the disk complex itself is contractible, and the {\it non-separating disk complex} is also contractible, which is the full subcomplex spanned by the vertices of non-separating disks.
We denote by $\mathcal D(M)$ the non-separating disk complex of $M$.

Consider the case that $M$ is a genus-$2$ handlebody $V$.
Then the complex $\mathcal D(V)$ is $2$-dimensional, and every edge of $\mathcal D(V)$ is contained in infinitely but countably many $2$-simplices.
For any two non-separating disks in $V$ which intersect each other transversely and minimally, it is easy to see that ``both'' of the two disks obtained from surgery on one along an outermost subdisk of another cut off by their intersection are non-separating.
This implies, from Theorem \ref{thm:surgery}, that $\mathcal D(V)$ and the link of any vertex of $\mathcal D(V)$ are all contractible.
Thus the complex $\mathcal D(V)$ deformation retracts to a tree in the barycentric subdivision of it.
Actually, this tree is a dual complex of $\mathcal D(V)$.
A portion of the non-separating disk complex of $V$ together with its dual tree is described  in Figure \ref{disk_complex}.

\begin{center}
\begin{overpic}[width=7cm, clip]{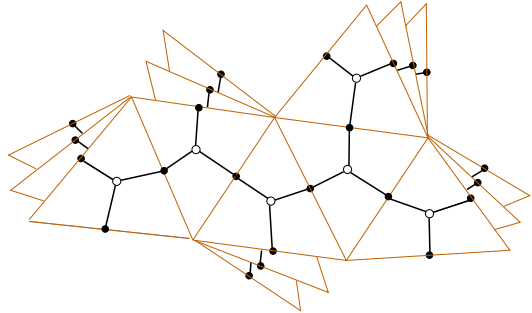}
  \linethickness{3pt}
\end{overpic}
\captionof{figure}{A portion of the non-separating disk complex $\mathcal D(V)$ of a genus-$2$ handlebody $V$ with its dual complex, a tree.}
\label{disk_complex}
\end{center}

Now we return to the genus-$2$ Heegaard splitting $(V, W; \Sigma)$ of a lens space $L = L(p, q)$.
An essential disk $E$ in $V$ is called {\it primitive} if there exists an essential disk $E'$ in $W$ such that $\partial E$ intersects $\partial E' $ transversely in a single point.
Such a disk $E'$ is called a {\it dual disk} of $E$, which is also primitive in $W$ having a dual disk $E$.
Note that both $W \cup \Nbd(E)$ and $V \cup \Nbd(E')$ are solid tori.
Primitive disks are necessarily non-separating.

The {\it primitive disk complex} $\mathcal P(V)$ for the splitting $(V, W; \Sigma)$ is defined to be the full subcomplex of $\mathcal D(V)$ spanned by the vertices of primitive disks in $V$.
From the structure of $\mathcal D(V)$, we observe that every connected component of any full subcomplex of $\mathcal D(V)$ is contractible.
Thus $\mathcal P(V)$ is contractible if it is connected or each of its connected components is contractible otherwise.
In Section \ref{sec:the_structure_of_primitive_disk_complexes}, we describe the complete combinatorial structure of the primitive disk complex $\mathcal P(V)$ for the genus-$2$ Heegaard splitting of each lens space. In particular, we find all lens spaces whose primitive disk complexes for the genus-$2$ splittings are connected, and so contractible.
We first develop several properties of the primitive disks in the following section, which will play a key role throughout the paper.

\section{Primitive disks}
\label{sec:primitive_disks}

\subsection{Primitive elements of the free group of rank two}
\label{subsec:primitive_elements}
The fundamental group of the genus-$2$ handlebody is the free group $\mathbb Z \ast \mathbb Z$ of rank two.
We call an element of $\mathbb Z \ast \mathbb Z$ {\it primitive} if it is a member of a generating pair of $\mathbb Z \ast \mathbb Z$.
Primitive elements of $\mathbb Z \ast \mathbb Z$ have been well understood.
For example, given a generating pair $\{y, z\}$ of $\mathbb Z \ast \mathbb Z$, a cyclically reduced form of any primitive element $w$ can be written as a product of terms each of the form $y^\epsilon z^n$ or $y^\epsilon z^{n+1}$, or else a product of terms each of the form $z^\epsilon y^n$ or $z^\epsilon y^{n+1}$, for some $\epsilon \in \{1,-1\}$ and some $n \in \mathbb Z$.
Consequently, no cyclically reduced form of $w$ in terms of $y$ and $z$ can contain $y$ and $y^{-1}$ $($and $z$ and $z^{-1})$ simultaneously.
Furthermore, we have an explicit characterization of primitive elements containing only positive powers of $y$ and $z$ as follows, which is given in
Osborne-Zieschang \cite{OZ}.

\begin{lemma}
Suppose that $w$ consists of exactly $m$ $z$'s and $n$ $y$'s where $1 \leq m \leq n$.
Then $w$ is primitive if and only if $(m, n) = 1$ and $w$ has the following cyclically reduced form
$$w = w(m, n) = g(1)g(1+m)g(1+2m)\cdots g(1+(m+n-1)m)$$
where the function $g:\mathbb Z \rightarrow \{z, y\}$ is defined by
$$g(i)= g_{m, n}(i) =
\begin{cases}
~z &  \ \text{if~ ~$i \equiv 1, 2, \cdots, m  \pmod{(m+n)}$} \\
~y &  \ \text{otherwise.}
\end{cases}
$$
\label{lem:primitive}
\end{lemma}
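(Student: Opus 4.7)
The plan is to prove both directions of the equivalence together by induction on $m + n$, with the reduction step driven by the Euclidean algorithm on the pair $(m, n)$. Necessity of $\gcd(m, n) = 1$ is immediate from abelianization: if $w$ is primitive, there is a partner $w' \in \Integer \ast \Integer$ with $\{w, w'\}$ a generating pair, so their images form a basis of the abelianization $\Integer^2$; in particular the image $(m, n)$ of $w$ must itself be a primitive vector of $\Integer^2$, so $\gcd(m, n) = 1$.

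The base case $m = 1$ is direct: the defining sequence of indices $1, 2, \dots, n+1$ runs through $g_{1, n}$ as a single $z$ followed by $n$ copies of $y$, so $w(1, n) = z y^n$, which is plainly primitive with partner $y$; and any positive word with one $z$ and $n$ copies of $y$ is cyclically equal to $zy^n$, giving uniqueness of the form in this case.

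For the inductive step, assume $2 \leq m \leq n$ and write $n = qm + r$ with $1 \leq r < m$ (where $r \geq 1$ by $\gcd(m,n) = 1$), and consider the Nielsen automorphism $\phi$ of $\Integer \ast \Integer$ sending $z \mapsto zy^{-q}$, $y \mapsto y$. The combinatorial heart of the argument is the claim that $w(m, n)$, written cyclically, has the block form $z y^{a_1} z y^{a_2} \cdots z y^{a_m}$ with each $a_i \in \{q, q+1\}$, exactly $r$ of them equal to $q+1$, and the cyclic pattern of $q$'s versus $q+1$'s itself coinciding with the cutting sequence $g_{r, m}$. Applying $\phi$ then collapses each $z y^q$ block to $z$ and each $z y^{q+1}$ block to $zy$, so $\phi(w(m, n))$ equals, up to cyclic permutation and the $y \leftrightarrow z$ swap, the word $w(r, m)$. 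Since $r + m < m + n$ and $\gcd(r, m) = \gcd(m, n) = 1$, the inductive hypothesis yields the primitivity of $w(r, m)$, and since primitivity is invariant under automorphisms of $\Integer \ast \Integer$, so is $w(m, n)$. For uniqueness, any positive primitive word with abelianization $(m, n)$ is transformed by $\phi$ into a positive primitive word with abelianization $(m, r)$, which after the swap and cyclic normalization must, by induction, equal $w(r, m)$; reversing the reduction recovers the canonical form.

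The hardest step is the combinatorial identification of $\phi(w(m, n))$ with (the $y$--$z$ swap of) $w(r, m)$ up to cyclic rotation: this is the assertion that the cutting sequence defining $g_{m, n}$ is compatible with one step of the continued-fraction expansion of $n/m$. Carrying it out requires tracking residues modulo $m + n$ against those modulo $m + r$ together with the cyclic shift introduced upon linearizing the words, and although each individual step is elementary, this is where all of the careful bookkeeping lies. A more abstract alternative would be to invoke the Whitehead algorithm for primitive elements of the free group of rank two, or the classification of simple closed curves on the once-punctured torus parameterized by slopes $(m, n)$ with $\gcd(m, n) = 1$, but the explicit Christoffel-type description of $w(m, n)$ is what will be needed for the disk-complex analysis in the sequel.
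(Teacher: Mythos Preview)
The paper does not give its own proof of this lemma; it simply attributes the result to Osborne--Zieschang \cite{OZ}. Your proposal therefore supplies an argument where the paper supplies only a citation. The strategy you outline --- reduce by a Nielsen automorphism $z \mapsto zy^{-q}$ corresponding to one step of the Euclidean algorithm on $(m,n)$, and identify the resulting word with $w(r,m)$ after a $y\leftrightarrow z$ swap --- is essentially the classical approach, and is in the spirit of \cite{OZ}. The base case, the abelianization argument for $\gcd(m,n)=1$, and the well-foundedness of the induction are all fine.

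There is one point in the uniqueness direction that you pass over too quickly. You assert that ``any positive primitive word with abelianization $(m,n)$ is transformed by $\phi$ into a positive primitive word with abelianization $(m,r)$''. Primitivity is preserved since $\phi$ is an automorphism, and the abelianization count is clear, but positivity is not automatic: applying $z \mapsto zy^{-q}$ to an arbitrary positive word can certainly introduce $y^{-1}$. What you need is that in a cyclically reduced positive primitive word with $m$ $z$'s and $n$ $y$'s, every maximal $y$-block between consecutive $z$'s has length at least $q = \lfloor n/m \rfloor$. This follows from the structural fact recorded in the paper just before the lemma (that a primitive element is a product of terms $y^\epsilon z^k$ and $y^\epsilon z^{k+1}$, or the symmetric statement), and you should invoke it explicitly. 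Once you do, the image under $\phi$ is indeed a positive word with $m$ $z$'s and $r$ $y$'s, the swap puts you in the range $1 \le r < m$ of the inductive hypothesis, and the argument closes. You also honestly flag that the combinatorial identification $\phi(w(m,n)) \sim w(r,m)$ is the bookkeeping-heavy step; that is accurate, and it is exactly the Christoffel-word recursion, so referring to the Sturmian/cutting-sequence literature (or to \cite{OZ} directly) is reasonable if you do not wish to write it out.
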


For example, $w(3, 5) = zy^2zy^2zy$ and
$w(3, 10) = zy^4zy^3zy^3$.

Let $\{z, y\}$ be a generating pair of the free group of rank two.
Given relatively prime integers $p$ and $q$ with $0 < q <p$, we define a sequence of $(p+1)$ elements $w_0, w_1, \cdots, w_{p-1}, w_p$ in term of $z$ and $w$ as follows.

Define first $w_0$ to be $y^p$.
For each $j \in \{1, 2, \cdots, p\}$, let $f_j:\mathbb Z \rightarrow \{z, y\}$ be the function given by
$$f_j(i)=
\begin{cases}
~z &  \ \text{if ~$i \equiv 1, 1+q, 1+2q, \cdots, 1+(j-1)q \pmod{p}$} \\
~y &  \ \text{otherwise,}
\end{cases}
$$
and then define $w_j = f_j(1)f_j(2)\cdots f_j(p)$.
Each of $w_j$ has length $p$ and consists of $j$ $z$'s and $(p-j)$ $y$'s.
In particular, $w_1 = zy^{p-1}$, $w_{p-1} = z^{p-q}yz^{q-1}$ and $w_p = z^p$.
We call the sequence $w_0, w_1, \cdots w_p$ the {\it $(p, q)$-sequence} of the pair $(z, y)$.
For example, the $(8, 3)$-sequence is given by
\begin{alignat*}{3}
w_0 &= yyyyyyyy &\qquad
w_1 &= zyyyyyyy &\qquad
w_2 &= zyyzyyyy \\
w_3 &= zyyzyyzy &\qquad
w_4 &= zzyzyyzy &\qquad
w_5 &= zzyzzyzy \\
w_6 &= zzyzzyzz &\qquad
w_7 &= zzzzzyzz &\qquad
w_8 &= zzzzzzzz
\end{alignat*}
Observe that $w_{p-j}$ is a cyclic permutation of $\overline{\psi(w_j)}$ for each $j$, where $\psi$ is the automorphism exchanging $z$ and $y$, and $\overline{w}$ is the reverse of $w$.
Thus $w_j$ is primitive if and only if $w_{p-j}$ is primitive.
We can find all primitive elements in the sequence as follows.

\begin{lemma}
Let $w_0, w_1, \cdots, w_p$ be the $(p, q)$-sequence of the generating pair $\{z, y\}$ with $0< q < p$.
Let $q'$ be the unique integer satisfying $1 \leq q' \leq p/2$ with $qq' \equiv \pm 1 \pmod{p}$.
Then $w_j$ is primitive if and only if $j \in \{1, q', p-q', p-1\}$.
\label{lem:four_primitives}
\end{lemma}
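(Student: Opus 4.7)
The plan is to combine Lemma~\ref{lem:primitive} (the Osborne--Zieschang normal form) with a short combinatorial count of adjacencies in $\mathbb{Z}/p$. First, a primitive element of $F_2$ has coprime abelianization, so $w_j$ primitive forces $\gcd(j,p)=1$; the extreme indices $j=1,p-1$ are easy, since $w_1=zy^{p-1}$ and $w_{p-1}$ (cyclically equal to $yz^{p-1}$) each sit in a Nielsen basis of $F_2$. Using the symmetry $w_j$ primitive $\iff w_{p-j}$ primitive noted just before the lemma, I only need to determine, among the remaining indices $j$ with $2\leq j\leq p/2$ and $\gcd(j,p)=1$, which give primitive $w_j$.

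For such $j$, Lemma~\ref{lem:primitive} applied with $(m,n)=(j,p-j)$ says $w_j$ is primitive if and only if $w_j$ coincides, up to cyclic permutation, with $w(j,p-j)$. Indexing the positions of a $p$-letter word by $0,1,\ldots,p-1$, the $z$-positions of $w_j$ form the set $Z_2=\{0,q,2q,\ldots,(j-1)q\}=qB$ in $\mathbb{Z}/p$, where $B=\{0,1,\ldots,j-1\}$, while the $z$-positions of $w(j,p-j)$ form $Z_1=\{k:kj\in B\}=j^{-1}B$. Hence $w_j$ being primitive is equivalent to $qB$ being a cyclic translate of $j^{-1}B$; multiplying through by $j\in(\mathbb{Z}/p)^{*}$, this becomes the statement that $rB$ is a cyclic translate of $B$, where $r\equiv qj\pmod p$.

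The remaining combinatorial claim is that, for $2\leq j\leq p/2$, the set $rB$ is a cyclic translate of $B$ in $\mathbb{Z}/p$ if and only if $r\equiv\pm 1\pmod p$. I plan to prove this by counting ordered pairs $(x,x+1)$ with both coordinates in the set (``adjacencies''); any cyclic translate of $B$ has exactly $j-1$ such pairs (since $j\leq p-2$ rules out a wraparound pair), and conversely a set of size $j$ with $j-1$ adjacencies must be a cyclic translate of $B$. An adjacency $(kr,lr)$ in $rB$ corresponds to a pair $(k,l)$ with $0\leq k,l\leq j-1$ and $l-k\equiv r^{-1}\pmod p$, so writing $t$ for the representative of $r^{-1}$ in $\{1,\ldots,p-1\}$ the total adjacency count equals $j-t$ for $t\in\{1,\ldots,j-1\}$, zero for $t\in\{j,\ldots,p-j\}$, and $j-p+t$ for $t\in\{p-j+1,\ldots,p-1\}$; these three ranges are disjoint precisely because $j\leq p/2$, which is exactly what made the symmetry reduction necessary. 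Equating the total to $j-1$ forces $t\in\{1,p-1\}$, i.e., $r\equiv\pm 1\pmod p$, whence $qj\equiv\pm 1\pmod p$ and so $j\equiv\pm q^{-1}\equiv\pm q'\pmod p$; in the range $[1,p/2]$ this picks out $j=q'$, and combined with $j\in\{1,p-1\}$ and the $j\leftrightarrow p-j$ symmetry we recover the full list $\{1,q',p-q',p-1\}$. The main obstacle is the adjacency count itself, which requires careful bookkeeping across the three ranges of $t$; once that is in hand, the remaining deductions are mechanical.
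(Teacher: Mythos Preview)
Your proof is correct. The overall architecture matches the paper's: both reduce via the $j\leftrightarrow p-j$ symmetry to $2\leq j\leq p/2$, both invoke Lemma~\ref{lem:primitive} to reformulate primitivity of $w_j$ as ``$w_j$ is a cyclic permutation of $w(j,p-j)$'', and both then show this forces $qj\equiv\pm 1\pmod p$. The difference lies in how that last combinatorial step is executed. The paper works directly with the shift: assuming $f_j(i)=g(1+(i-1+k)j)$ for some $k$, it tracks the residues $a_n\in\{1,\dots,j\}$ of $1+nr+kj$ (where $r\equiv qj$) and uses monotonicity of the resulting sequence to pin down $r\in\{1,p-1\}$. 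You instead recast the condition as ``$rB$ is a cyclic translate of $B$'' in $\mathbb Z/p$ and isolate an invariant---the number of adjacent pairs $(x,x+1)$---that equals $j-1$ exactly for arithmetic-progression blocks, then compute this invariant for $rB$ by counting solutions to $l-k\equiv r^{-1}$. Your approach is slightly more conceptual in that it names the invariant explicitly and makes the role of the hypothesis $j\leq p/2$ (disjointness of the three ranges for $t$) transparent; the paper's argument is more hands-on but shorter. Both routes are equally rigorous and arrive at the same conclusion $j=q'$ in the reduced range.
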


\begin{proof}
It is clear that $w_1$ and $w_{p-1}$ are primitive while $w_0$ and $w_p$ are not.

\medskip

{\noindent \sc Claim 1.} $w_{q'}$ is primitive.\\
{\it Proof of Claim 1.}
We write $w_{q'} = f_{q'}(1)f_{q'}(2) \cdots f_{q'}(p)$, and $w(q', p-q') = g(1)g(1+q')g(1+2q')\cdots g(1+(p-1)q')$ where $g = g_{q', p-q'}$ in the notation in Lemma \ref{lem:primitive}.
Since $f(i) = z$ if and only if $i \equiv 1+nq \pmod{p}$ for some $n \in \{0, 1, \cdots, q'-1\}$, it can be directly verified that
$$f_{q'}(i)=
\begin{cases}
 g(1+(i-1)q')&  \ \text{if~ $qq' \equiv 1 \pmod{p}$} \\
 g(1+(i+q)q')&  \ \text{if~ $qq' \equiv -1 \pmod{p}$}.
\end{cases}
$$
Thus $w_{q'}$ is $w(q', p-q')$ itself if $qq' \equiv 1 \pmod{p}$ or is a cyclic permutation of it if $qq' \equiv -1 \pmod{p}$.
In either cases, $w_{q'}$ is primitive.

\medskip

{\noindent \sc Claim 2.} If $1 < j \leq p/2$ and $j \neq q'$, then $w_j$ is not primitive.\\
{\it Proof of Claim 2.}
From the assumption, there is a unique integer $r$ satisfying $2 \leq r \leq p-2$ and $qj \equiv r \pmod{p}$.
Suppose, for contradiction, that $w_j$ is primitive.
Then, by Lemma \ref{lem:primitive}, $(p, j) = 1$ and $w_j$ is a cyclic permutation of $w(j, p-j)$.
We write $w_j = f_j(1) f_j(2) \cdots  f_j(p)$ and $w(j, p-j) = g(1)g(1+j)g(1+2j) \cdots g(1+(p-1)j)$ where $g = g_{j, p-j}$ as in Lemma \ref{lem:primitive}.
Then there is a constant $k$ such that $f_j(i) = g(1+ (i-1+k)j)$ for all $i \in \mathbb Z$.
In particular, $f_j(1 + nq) = z = g(1+(nq + k)j)$ for each $ n \in \{0, 1, \cdots, j-1\}$.

From the definition of $g = g_{j, p-j}$ and the choice of the integer $r$, we have $1+(nq + k)j \equiv 1+nr+kj \equiv 1, 2, \cdots, j \pmod{p}$.
Let $a_n$ be the unique integer satisfying $1+nr + kj \equiv a_n$ with $a_n \in \{1, 2, \cdots, j\}$ for each $n \in \{0, 1, \cdots, j-1\}$.
Observe that $a_n + r \equiv a_{n+1}$ for each $n \in \{0, 1, \cdots, j-2 \}$, and in particular, $a_0 + r \equiv a_1$.
Since $1 \leq a_0 \leq j < p$ and $2 \leq r \leq p-2 < p$, we have only two possibilities: either $a_0 + r = a_1$ or $a_0 + r = a_1 + p$.

First consider the case  $a_0 + r = a_1$.
Then $r \leq j-1$ and $a_n < a_{n+1}$, consequently $a_0 =1, a_1 = 2, \cdots, a_{j-1} = j$, which implies $r=1$, a contradiction.
Next, if $a_0 + r = a_1 + p$, then $p+1-j \leq r$ and $a_n > a_{n+1}$, thus we have $a_0 = j, a_1 = j-1, \cdots, a_{j-1} = 1$, and consequently $r = p-1$, a contradiction again.

\medskip

By the claims, if $1 \leq j \leq p/2$, then $w_j$ is primitive only when $j =1$ or $j=q'$.
If $p/2 \leq j \leq p$, due to the fact that $w_{p-j}$ is a cyclic permutation of $\overline{\psi(w_j)}$, the only primitive elements are $w_{p-q'}$ and $w_{p-1}$, which completes the proof.
\end{proof}

A simple closed curve in the boundary of a genus-$2$ handlebody $W$ represents elements of $\pi_1(W) = \mathbb Z \ast \mathbb Z$.
We call a pair of essential disks in $W$ a {\it complete meridian system} for $W$ if the union of the two disks cuts off $W$ into a $3$-ball.
Given a complete meridian system $\{D, E\}$, assign symbols $x$ and $y$ to the circles $\partial D$ and $\partial E$ respectively.
Suppose that an oriented simple closed curve $l$ on $\partial W$ that meets $\partial D \cup \partial E$ transversely and minimally.
Then $l$ determines a word in terms of $x$ and $y$ which can be read off from the the intersections of $l$ with $\partial D$ and $\partial E$ (after a choice of orientations of $\partial D$ and $\partial E$), and hence $l$ represents an element of the free group $\pi_1 (W) = \left< x, y\right>$.

In this set up, the following is a simple criterion for the primitiveness of the elements represented by such simple closed curves.

\begin{lemma}
With a suitable choice of orientations of $\partial D$ and $\partial E$, if a word corresponding to a simple closed curve $l$ contains one of the pairs of terms$:$ $(1)$  both of $xy$ and $xy^{-1}$ or $(2)$  both of $xy^nx$ and $y^{n+2}$ for $n \geq 0$, then the element of $\pi_1 (W)$ represented by $l$ cannot be $($a positive power of $)$ a primitive element.
\label{lem:key}
\end{lemma}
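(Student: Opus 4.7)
My plan is to argue by contradiction, leveraging two facts recalled in Section \ref{subsec:primitive_elements}: that no cyclically reduced form of a primitive element of $\langle x,y\rangle$ contains both $y$ and $y^{-1}$ (nor both $x$ and $x^{-1}$), and the explicit form given by Lemma \ref{lem:primitive} for primitive elements expressed in positive letters only. Suppose the conjugacy class of $l$ in $\pi_1(W)=\langle x,y\rangle$ equals that of $u^k$ for some primitive $u$ and some $k\geq 1$. After replacing $u$ by a cyclically reduced conjugate (primitiveness is conjugation-invariant), $u^k$ is cyclically reduced as well. Since $l$ meets $\partial D\cup\partial E$ transversely and minimally, no bigon can exist along $\partial D$ or $\partial E$, so the cyclic word $w$ read off from $l$ is also cyclically reduced and therefore coincides, as a cyclic word, with $u^k$.

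In case (1), the subwords $xy$ and $xy^{-1}$ of $w$ force both $y$ and $y^{-1}$ to appear in $w=u^k$, contradicting the first fact above (which is preserved under taking positive powers of a cyclically reduced primitive).

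In case (2), I would first reduce to positive letters only: if $w$ contains $y^{-1}$, then since $y^{n+2}$ forces $y$ into $w$, the case-(1) argument applies; symmetrically for $x^{-1}$ using the $x$'s of $xy^n x$. Once $w$ uses only positive powers of $x$ and $y$, Lemma \ref{lem:primitive}, applied to $u$ with the roles of $x$ and $y$ possibly swapped, tells me that, reading $u$ cyclically, every maximal run of $y$'s between consecutive $x$'s has length in $\{\ell,\ell+1\}$ for some integer $\ell\geq 0$. This two-value property is inherited cyclically by $u^k=w$. However, the subword $xy^n x$ exhibits a $y$-run of length exactly $n$, while $y^{n+2}$ lies inside a maximal $y$-run of length at least $n+2$; since these two values differ by at least $2$, this is the desired contradiction.

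The step I expect to be most delicate is the degenerate instance of case (2) where $u$ contains strictly more $x$'s than $y$'s, so that Lemma \ref{lem:primitive} must be applied with the roles of $x$ and $y$ swapped: the constraint then forces every $y$-run of $u$ to have length exactly $1$, which already precludes the subword $y^{n+2}$ with $n+2\geq 2$; the still more degenerate cases $u=x^a$ or $u=y^b$ are handled directly, as one of the hypothesized subwords is then visibly absent. Aside from this case split, the only other point requiring care is the claim that the cyclic run-length structure of $u$ is inherited by $u^k$, which follows immediately from the cyclic reducedness of $u$ since no cancellation occurs across the seams of successive copies.
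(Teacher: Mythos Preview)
Your proof is correct and takes a genuinely different route from the paper's. The paper argues geometrically in the $4$-holed sphere $\Sigma'$ obtained by cutting $\partial W$ along $\partial D\cup\partial E$: in case~(1) it exhibits four specific arcs $l_\pm,m_\pm$ whose presence forces every arc of $l\cap\Sigma'$ to join distinct boundary circles (hence the word is cyclically reduced) and then reads off both $x$ and $x^{-1}$ (or $y$ and $y^{-1}$); in case~(2) it treats $n=0$ by a similar arc argument and handles $n\geq 1$ by an induction that replaces $E$ with a new meridian disk $E^\ast$ to lower $n$. You instead front-load the cyclic reducedness of $w$ as a general consequence of minimal position, and then argue entirely algebraically from the structural facts about primitive elements recalled before Lemma~\ref{lem:primitive} (no letter together with its inverse; $y$-run lengths confined to two consecutive values). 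This is cleaner and avoids the inductive disk-swap, at the cost of invoking the Osborne--Zieschang description more heavily. One point to tighten: your step ``no bigons $\Rightarrow$ $w$ is cyclically reduced'' is true, but it is not literally the bigon criterion---an arc of $l\cap\Sigma'$ with both endpoints on the same boundary circle need not bound a bigon directly; one needs a short innermost-arc and connectivity argument in the planar surface $\Sigma'$ (using $|l\cap d_+|=|l\cap d_-|$, $|l\cap e_+|=|l\cap e_-|$, and that $l$ is a single curve) to rule this out. The paper's geometric proof establishes cyclic reducedness only from the specific hypotheses of each case, so it does not assume this general fact; your approach trades that local work for a one-time appeal to a standard (but not entirely trivial) lemma.
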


\begin{center}
\begin{overpic}[width=5cm, clip]{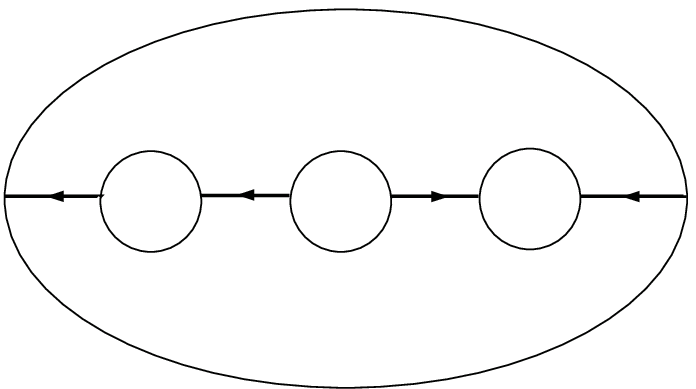}
  \linethickness{3pt}
  \put(8,45){$l_+$}
  \put(43,45){$m_+$}
  \put(82,45){$m_-$}
  \put(126,45){$l_-$}
  \put(25,37){$e_+$}
  \put(65,37){$d_-$}
  \put(104,37){$e_-$}
  \put(65,67){$d_+$}
  \put(65,7){$\Sigma'$}
\end{overpic}
\captionof{figure}{The 4-holed sphere $\Sigma'$.}
\label{not_primitive}
\end{center}

\begin{proof}
Let $\Sigma'$ be the $4$-holed sphere cut off from $\partial W$ along $\partial D \cup \partial E$.
Denote by $d_+$ and $d_-$ (by $e_+$ and $e_-$, respectively) the boundary circles of $\Sigma'$ that came from $\partial D$
(from $\partial E$, respectively).

Suppose first that $l$ represents an element of a form containing both $xy$ and $xy^{-1}$.
Then we may assume that there are two subarcs $l_+$ and $l_-$ of $l \cap \Sigma'$ such that $l_+$ connects $d_+$ and $e_+$, and $l_-$ connects $d_+$ and $e_-$ as in Figure \ref{not_primitive}.
Since $|~l \cap d_+| = |~l \cap d_-|$ and $|~l \cap e_+| = |~l \cap e_-|$, we must have two other arcs $m_+$ and $m_-$ of $l \cap \Sigma'$ such that $m_+$ connects $d_-$ and $e_+$, and $m_-$ connects $d_-$ and $e_-$.
See Figure \ref{not_primitive}.

Consequently, there exists no arc component of $l \cap \Sigma'$ that meets only one of $d_+$, $d_-$, $e_+$ and $e_-$.
That is, any word corresponding to $l$ contains neither $x^{\pm1} x^{\mp1}$ nor $y^{\pm1} y^{\mp1}$, and hence it is cyclically reduced.
Considering all possible directions of the arcs $l_+$, $l_-$, $m_+$ and $m_-$, each word represented by $l$ must contain both $x$ and $x^{-1}$ (or both $y$ and $y^{ -1}$), which means that $l$  cannot represent (a positive power of) a primitive element of $\pi_1 (W)$.

Next, suppose that a word corresponding to $l$ contains $x^2$ and $y^2$, which is the case of $n=0$ in the second condition.
Then there are two arcs $l_+$ and $l_-$ of $l \cap \Sigma'$ such that $l_+$ connects $d_+$ and $d_-$, and $l_-$ connects $e_+$ and $e_-$.
By a similar argument to the above, we see again that any word corresponding to $l$ is cyclically reduced, but contains both of $x^2$ and $y^2$.
Thus $l$ cannot represent (a positive power of) a primitive element.

Suppose that a word corresponding to $l$ contains $xy^nx$ and $y^{n+2}$ for $n \geq 1$.
Then there are two subarcs $\alpha$ and $\beta$ of $l$ which correspond to $xy^nx$ and $y^{n+2}$ respectively.
In particular, we may assume that $\alpha$ starts at $d_+$, intersects $\partial E$ in $n$ points, and ends in $d_-$, while $\beta$ starts at $e_+$, intersects $\partial E$ in its interior in $n$ points, and ends in $e_-$.

Let $m$ be the subarc of $\alpha$ corresponding to $xy$.
Then $m$ connects two circles $d_+$ and one of $e_{\pm}$, say $e_+$.
Choose a disk $E^\ast$ properly embedded in the $3$-ball $W$ cut off by $D \cup E$ such that the boundary circle $\partial E^\ast$ is the frontier of a regular neighborhood of $d_+ \cup m \cup e_+$ in $\Sigma'$.
Then $E^\ast$ is a non-separating disk in $W$ and forms a complete meridian system with $D$.
Assigning the same symbol $y$ to $\partial E^\ast$, the arc $\alpha$ determines $xy^{n-1}x$ while $\beta$ determines $y^{n+1}$.
Thus the conclusion follows by induction.
\end{proof}

\subsection{Primitive disks in a genus-$2$ handlebody}
\label{subsec:primitive_disks}

We recall that $(V, W; \Sigma)$ denotes a genus-$2$ Heegaard splitting of a lens space $L = L(p, q)$.
The primitive disks in $V$ or in $W$ are introduced in Section \ref{sec:primitive_disk_complexes}.
We call a pair of disjoint, non-isotopic primitive disks in $V$ a {\it primitive pair} in $V$.
Similarly, a triple of pairwise disjoint, non-isotopic primitive disks is a {\it primitive triple}.
A non-separating disk $E_0$ properly embedded in $V$ is called {\it semiprimitive} if there is a primitive disk $E'$ in $W$ disjoint from $E_0$.

Any simple closed curve on the boundary of the solid torus $W$ represents an element of $\pi_1 (W)$ which is the free group of rank two.
We interpret primitive disks algebraically as follows, which is a direct consequence of Gordon \cite{Go}.

\begin{lemma}
Let $D$ be a non-separating disk in $V$.
Then $D$ is primitive if and only if $\partial D$ represents a primitive element of $\pi_1 (W)$.
\label{lem:primitive_element}
\end{lemma}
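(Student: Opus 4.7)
The forward direction is elementary. Suppose $D$ has a dual disk $D' \subset W$. Then $D'$ must be non-separating, since $\partial D$ meets $\partial D'$ exactly once while a separating simple closed curve on $\Sigma$ meets any other simple closed curve in an even number of points. Extend $\{D'\}$ to a complete meridian system $\{D', D''\}$ of $W$, and let $\{x, y\}$ be the dual free basis of $\pi_1(W)$, with $x$ dual to $D'$ and $y$ dual to $D''$. After isotoping $\partial D$ to minimal position and reading off the word around it, $[\partial D]$ is represented by a cyclic word containing exactly one occurrence of $x^{\pm 1}$ together with the $y^{\pm 1}$'s coming from each intersection with $\partial D''$. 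After cyclic reduction this has the form $x^{\pm 1} y^{n}$ for some $n \in \Integer$, which is primitive because $\{x^{\pm 1} y^{n}, y\}$ is a free basis of $\pi_1(W)$.

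For the converse, the plan is an iterative reduction of the meridian system. Start with an arbitrary complete meridian system $\{E_1, E_2\}$ of $W$ with dual basis $\{x_1, x_2\}$, and isotope $\partial D$ to meet $\partial E_1 \cup \partial E_2$ minimally; the cyclic word $w$ representing $[\partial D]$ in $\langle x_1, x_2 \rangle$ is then cyclically reduced. If $w$ is a single generator $x_i^{\pm 1}$, then $|\partial D \cap \partial E_i| = 1$ and $|\partial D \cap \partial E_{3-i}| = 0$, so $E_i$ serves as the dual disk and $D$ is primitive.

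If instead $w$ has length at least two, then, since $w$ is primitive, Whitehead's algorithm (together with the explicit classification of primitive cyclic words used in Lemma \ref{lem:primitive}) furnishes a length-reducing Whitehead automorphism of $\langle x_1, x_2 \rangle$. The essential geometric content of Gordon's theorem \cite{Go} is that this Whitehead reduction can be realized by a band-sum of $E_1$ and $E_2$ along a subarc of $\partial D$ in the four-holed sphere $\Sigma \setminus (\partial E_1 \cup \partial E_2)$, producing a new complete meridian system $\{E_1', E_2'\}$ of $W$ for which $|\partial D \cap (\partial E_1' \cup \partial E_2')|$ is strictly smaller. Iterating, the intersection number drops at each step, so the process terminates in finitely many steps at the single-generator base case, yielding the required dual disk.

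The main obstacle is this last geometric realization step: given that $w$ is primitive of length at least two and hence admits an abstract length-reducing Whitehead move on the free basis, one must exhibit an actual disk-slide on the meridian system in $W$ that implements the move while keeping $\partial D$ fixed and producing a legitimate new complete meridian system. This is the heart of Gordon's theorem and relies on the classical correspondence between Nielsen transformations of a basis of $\pi_1(W) = \mathbb{Z} \ast \mathbb{Z}$ and band-sum operations on meridian systems of the handlebody $W$.
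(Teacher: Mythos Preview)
Your proposal is correct and matches the paper's approach: the paper does not give its own argument but simply attributes the lemma to Gordon \cite{Go}, and your sketch is a faithful outline of that result---the forward direction is the elementary observation that a word with a single occurrence of one generator is primitive, while the converse is Gordon's geometric realization of length-reducing Whitehead moves by band-sums on the meridian system, iterated until a single intersection remains.
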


Note that no disk can be both primitive and semiprimitive since the boundary circle of a semiprimitive disk in $V$ represents the $p$-th power of a primitive element of $\pi_1(W)$.

\begin{lemma}
Let $\{D, E\}$ be a primitive pair of $V$.
Then $D$ and $E$ have a common dual disk if and only if there is a semiprimitive disk $E_0$ in $V$ disjoint from $D$ and $E$.
\label{lem:common_dual}
\end{lemma}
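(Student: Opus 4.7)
The plan is to work throughout on the four-holed sphere $\Sigma' = \Sigma \setminus \Nbd(\partial D \cup \partial E)$, whose four boundary circles I label $d^{\pm}$ (parallels of $\partial D$) and $e^{\pm}$ (parallels of $\partial E$), and to translate the existence of a common dual or of a semiprimitive disk into the existence of suitable simple closed curves on $\Sigma'$.

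For the direction ``common dual $\Rightarrow$ semiprimitive disk,'' suppose $D'$ is a common dual in $W$. Since $|\partial D' \cap \partial D| = |\partial D' \cap \partial E| = 1$, the intersection $\partial D' \cap \Sigma'$ consists of two disjoint properly embedded arcs $\gamma_1, \gamma_2$; after relabeling the $\pm$ signs I may assume $\gamma_1$ joins $d^+$ to $e^+$ and $\gamma_2$ joins $d^-$ to $e^-$. Let $\alpha$ be the frontier in $\Sigma'$ of a regular neighborhood of $d^+ \cup \gamma_1 \cup e^+$, i.e., the band sum of $d^+$ and $e^+$ along $\gamma_1$. Then $\alpha$ is a simple closed curve separating $\{d^+, e^+\}$ from $\{d^-, e^-\}$ in $\Sigma'$, hence non-separating on $\Sigma$ because the annular neighborhoods $\Nbd(\partial D)$ and $\Nbd(\partial E)$ reconnect the two sides across $\partial D$ and $\partial E$. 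Since $\alpha$ is a product of conjugates of $\partial D$ and $\partial E$ in $\pi_1(\Sigma)$ and both are trivial in $\pi_1(V)$, the curve $\alpha$ is null-homotopic in $V$ and so, by Dehn's Lemma, bounds an embedded disk $E_0 \subset V$; by construction $E_0$ is non-separating and disjoint from $D \cup E$. Finally, $\gamma_1$ lies inside the neighborhood whose frontier is $\alpha$, while $\gamma_2$ lies in the complementary pair of pants, so $\partial D'$ is disjoint from $\alpha$, giving $E_0 \cap D' = \emptyset$. Hence $D'$ itself is a primitive disk in $W$ disjoint from $E_0$, proving that $E_0$ is semiprimitive.

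For the converse, given $E_0$ and $E_0'$ as in the hypothesis and in minimal position with respect to $\partial D \cup \partial E$, the curve $\partial E_0$ separates $\Sigma'$ into two pairs of pants $U$ and $L$ with boundaries $\{d^+, e^+, \partial E_0\}$ and $\{d^-, e^-, \partial E_0\}$ respectively. Because $\partial E_0' \cap \partial E_0 = \emptyset$, every arc of $\partial E_0' \cap \Sigma'$ lies entirely in $U$ or entirely in $L$, with endpoints on $d^{\pm} \cup e^{\pm}$ only. Identifying $\pi_1(V) = \langle a, b\rangle$ so that $a, b$ are dual to $D, E$, the primitive element represented by $\partial E_0'$ (by Lemma~\ref{lem:primitive_element} applied with the roles of $V$ and $W$ swapped) corresponds to a cyclic word $w$ whose letters $a^{\pm 1}$ and $b^{\pm 1}$ record the signed crossings with $\partial D$ and $\partial E$ in order. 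The constraint that each arc of $\partial E_0'$ be confined to $U$ or to $L$ translates into strong restrictions on the subwords of $w$: subwords whose associated arcs would be forced to cross $\partial E_0$ are forbidden. Combining the classification of primitive words given by Lemma~\ref{lem:primitive} with the forbidden-subword obstruction of Lemma~\ref{lem:key}, I would argue that the cyclically reduced $w$ must be a cyclic conjugate of $a^{\pm 1} b^{\pm 1}$. Consequently $|\partial E_0' \cap \partial D| = |\partial E_0' \cap \partial E| = 1$, and $D' := E_0'$ is itself a common dual of $D$ and $E$.

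The main obstacle is this combinatorial argument in the converse direction: one must enumerate the possible arc-types on $U$ and $L$ (the $d^+$-to-$d^+$, $d^+$-to-$e^+$, $e^+$-to-$e^+$ types in $U$, and analogously in $L$), match them against the cyclic sequence of letters in the primitive word $w$, and show using Lemma~\ref{lem:key} that no configuration with $n_D + n_E > 2$ crossings survives. The forward direction, by contrast, is the clean geometric band-sum construction of $\alpha$ from the common dual $D'$.
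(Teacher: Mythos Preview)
Your forward direction is correct and actually more explicit than the paper, which dismisses necessity as ``clear.'' Your band-sum construction of $E_0$ from the common dual is clean.

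The converse, however, has a genuine gap, and your approach diverges substantially from the paper's. You work on $\Sigma' = \Sigma \setminus \Nbd(\partial D \cup \partial E)$ and try to pin down the word that $\partial E_0'$ reads in $\pi_1(V)=\langle a,b\rangle$. The problem is twofold. First, the ``combinatorial argument'' you flag as the main obstacle is indeed the entire content of this direction, and you have not carried it out: the arc types in $U$ and $L$ are not as restricted as you suggest (for instance, a $d^+$--to--$d^+$ arc and a $d^+$--to--$e^+$ arc can coexist disjointly in a pair of pants), so a real case analysis is required. Second, even if you show the \emph{cyclically reduced} word is $a^{\pm1}b^{\pm1}$, you still need the \emph{geometric} intersection numbers to be $1$; arcs of $\partial E_0'$ going from $d^+$ back to $d^+$ are essential in $\Sigma'$ yet contribute a cancelling pair $aa^{-1}$ to the word, so the leap to $|\partial E_0'\cap\partial D|=1$ is not automatic.

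The paper avoids all of this by changing the viewpoint entirely. It fixes $E'$ (your $E_0'$) and argues that $E'$ is dual to \emph{every} primitive disk in $V$ disjoint from $E_0$. To do so it picks an auxiliary primitive disk $E$ dual to $E'$, minimizing $|D\cap E|$, and works on the $4$-holed sphere $\Sigma_0$ cut by $\partial E\cup\partial E_0$ (not by $\partial D\cup\partial E$). On $\Sigma_0$ the curve $\partial E_0'$ sits as $p$ parallel spanning arcs, which lets a single application of Lemma~\ref{lem:key} constrain the arc determining $\partial D$ and force $|\partial D\cap\partial E'|=1$; the case $D\cap E\ne\emptyset$ is then dispatched by a surgery/minimality contradiction. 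This indirect route sidesteps the delicate arc enumeration on $\Sigma'$ that your approach would require.
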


\begin{proof}
The necessity is clear.
For sufficiency, let $E'$ be a primitive disk in $W$ disjoint from the semiprimitive disk $E_0$ in $V$.
It is enough to show that $E'$ is a dual disk of every primitive disk in $V$ disjoint from $E_0$, since then $E'$ would be a common dual disk of $D$ and $E$.

\smallskip

\noindent {\it Claim}: If $E$ is a primitive disk in $V$ dual to $E'$, then $E$ is disjoint from $E_0$.

\noindent {\it Proof of claim}.
Denote by $E_0^+$ and $E_0^-$ the two disks on the boundary of the solid torus $V$ cut off by $E_0$ that came from $E_0$.
Suppose that $E$ intersects $E_0$.
Then an outermost subdisk $C$ of $E$ cut off by $E \cap E_0$ must intersect $\partial E'$ since $\partial E'$ is a longitude of the solid torus $V$ cut off by $E_0$.
We may assume that $C$ is incident to $E_0^+$.
Considering $|E \cap E_0^+| = |E \cap E_0^-|$, there is a subarc of $\partial E$ whose two endpoints lie in $\partial E_0^-$, which also intersects $\partial E'$, and hence $\partial E$ intersects $\partial E'$ at least in two points, a contradiction.

\smallskip

Let $D$ be a primitive disk in $V$ disjoint from $E_0$.
Among all the primitive disks in $V$ dual to $E'$, choose one, denoted by $E$ again, such that $|D \cap E|$ is minimal.
By the claim, $E$ is disjoint from $E_0$.
Let $E'_0$ be the unique semiprimitive disk in $W$ disjoint from $E \cup E'$.
Since $\{E', E'_0\}$ forms a complete meridian system of $W$, by assigning symbols $x$ and $y$ to oriented $\partial E'$ and $\partial E'_0$ respectively, any oriented simple closed curve on $\partial W$ represents an element of the free group $\pi_1 (W) = \langle x, y \rangle$ as in the previous section.
In particular, we may assume that $\partial E$ and $\partial E_0$ represents elements of the form $x$ and $y^p$ respectively.

Denote by $\Sigma_0$ the $4$-holed sphere $\partial V$ cut off by $\partial E \cup \partial E_0$.
Consider $\Sigma_0$ as a $2$-holed annulus with two boundary circles $\partial E_0^\pm$ came from $\partial E_0$ and with two holes $\partial E^\pm$ came from $\partial E$.
Then $\partial E'_0$ is the union of $p$ spanning arcs in $\Sigma_0$ which divides $\Sigma_0$ into $p$ rectangles, and the two holes $\partial E^\pm$ is contained in a single rectangle.
Notice that $\partial E'$ is an arc in the rectangle connecting the two holes.
See Figure \ref{Sigma_0} (a).

\begin{center}
\begin{overpic}[width=10.5cm, clip]{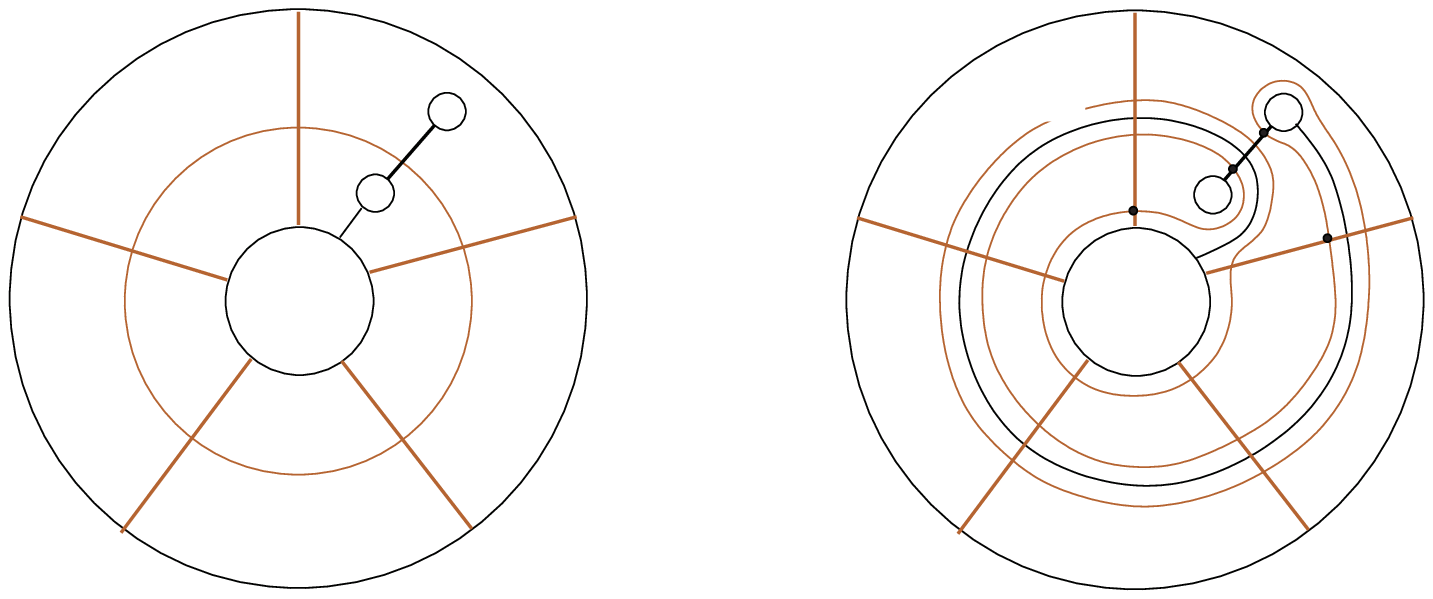}
  \linethickness{3pt}
    \put(55,0){(a)}
    \put(228,0){(b)}
    \put(55,22){$\Sigma_0$}
    \put(228,22){$\Sigma_0$}
    \put(52,72){$\partial E_0^+$}
    \put(225,72){$\partial E_0^+$}
    \put(120,54){$\partial E_0^-$}
    \put(27,106){$\partial D$}
    \put(292,44){$\partial E_0^-$}
    \put(76,117){\footnotesize $\partial E^-$}
    \put(85,93){\footnotesize $\partial E^+$}
    \put(71,105){\footnotesize $\partial E'$}
    \put(67,92){\footnotesize $\alpha$}
    \put(217,113){\footnotesize $\alpha$}
\end{overpic}
\captionof{figure}{The $2$-holed annulus $\Sigma_0$ when $p = 5$, for example.}
\label{Sigma_0}
\end{center}
Suppose that $D$ is disjoint from $E$.
Then $D$ is a non-separating disk in $V$ disjoint from $E \cup E_0$, and hence the boundary circle $\partial D$ can be considered as the frontier of a regular neighborhood in $\Sigma_0$ of the union of one of the two boundary circles, one of the two holes of $\Sigma_0$, and an arc $\alpha$ connecting them.
The arc $\alpha$ cannot intersect $\partial E'_0$ in $\Sigma_0$, otherwise an element represented by $\partial D$ must contain both of $xy$ and $xy^{-1}$ (after changing orientations if necessary), which contradicts that $D$ is primitive by Lemma \ref{lem:key} (see Figure \ref{Sigma_0} (b)).
Thus $\alpha$ is disjoint from $\partial E'_0$, and consequently $D$ intersects $\partial E'$ in a single point.
That is, $E'$ is a dual disk of $D$ (see Figure \ref{Sigma_0} (a)).

Suppose next that $D$ intersects $E$.
Let $C$ be an outermost subdisk of $D$ cut off by $D \cap E$.
Then one of the resulting disks from surgery on $E$ along $C$ is $E_0$ and the other, say $E'$, is isotopic to none of $E$ and $E_0$.
The arc $\partial C \cap \Sigma_0$ can be considered as the frontier of a regular neighborhood of the union of a boundary circle of $\Sigma_0$ came from $\partial E_0$ and an arc, denoted by $\alpha_0$, connecting this circle and a hole came from $\partial E$.
By a similar argument to the above, one can show that $\alpha_0$ is disjoint from $\partial E'_0$, otherwise $D$ would not be primitive.
Consequently, the boundary circle of the resulting disk $E_1$ from the surgery intersects $\partial E'$ in a single point, which means $E_1$ is primitive with the dual disk $E'$.
But we have $|D \cap E_1| < |D \cap E|$ from the surgery construction, which contradicts the minimality of $|D \cap E|$.
\end{proof}

In the proof of Lemma \ref{lem:common_dual}, if we assume that the primitive disk $D$ also intersects $E_0$, then the subdisk $C$ of $D$ cut off by $D \cap (E \cup E_0)$ would be incident to one of $E$ and $E_0$. The argument to show that the resulting disk $E_1$ from the surgery is primitive with the dual disk $E'$ still holds when $C$ is incident to $E_0$ and even when $D$ is semiprimitive. This observation suggests the following lemma.

\begin{lemma}
Let $E_0$ be a semiprimitive disk in $V$ and let $E$ be a primitive disk in $V$ disjoint from $E_0$.
If a primitive or semiprimitive disk $D$ in $V$  intersects $E \cup E_0$, then one of the disks from surgery on $E \cup E_0$ along an outermost subdisk of $D$ cut off by $D \cap (E \cup E_0)$ is either $E$ or $E_0$, and the other, say $E_1$, is a primitive disk, which has a common dual disk with $E$.
\label{lem:first_surgery}
\end{lemma}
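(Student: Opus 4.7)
The plan is to extend the surgery-on-surface analysis from the proof of Lemma \ref{lem:common_dual}. Fix a primitive disk $E'$ in $W$ dual to $E$, and let $E'_0$ be the unique semiprimitive disk in $W$ disjoint from $E \cup E'$, so that $\{E', E'_0\}$ is a complete meridian system of $W$. Orient the boundaries and set $x = \partial E'$, $y = \partial E'_0$, giving $\pi_1(W) = \langle x, y \rangle$; then $\partial E$ represents $x$ and $\partial E_0$ represents $y^p$. View the $4$-holed sphere $\Sigma_0 = \partial V \setminus (\partial E \cup \partial E_0)$ as the $2$-holed annulus with annular boundary circles $\partial E_0^\pm$ coming from $\partial E_0$ and holes $\partial E^\pm$ coming from $\partial E$. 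The arcs $\partial E'_0 \cap \Sigma_0$ form $p$ pairwise parallel spanning arcs cutting $\Sigma_0$ into $p$ rectangles, and the single arc $\partial E' \cap \Sigma_0$ joins $\partial E^+$ and $\partial E^-$ inside the unique rectangle containing both holes.

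Now let $C$ be the outermost subdisk of $D$ cut off by $D \cap (E \cup E_0)$, and set $\alpha = C \cap (E \cup E_0)$ and $\alpha_0 = \partial C \cap \Sigma_0$. The key first step is to show that $\alpha_0$ is disjoint from $\partial E'_0 \cap \Sigma_0$. When $\alpha \subset E$, the endpoints of $\alpha_0$ lie on $\partial E^\pm$, and any crossing of $\alpha_0$ with $\partial E'_0$ forces, via a subarc of $\partial D$ adjacent to $\alpha_0$, a pair of subwords $xy$ and $xy^{-1}$ in a cyclic word for $\partial D$ exactly as in the proof of Lemma \ref{lem:common_dual}. When $\alpha \subset E_0$, the endpoints of $\alpha_0$ lie on $\partial E_0^\pm$; tracing the adjacent subarcs of $\partial D$ across the rectangles shows that any crossing with $\partial E'_0$ produces subwords of the form $xy^n x$ and $y^{n+2}$ for some $n \geq 0$. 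In both cases Lemma \ref{lem:key} is violated, contradicting that $\partial D$ represents a primitive element or a positive power of one. Once $\alpha_0$ lies entirely inside a single rectangle, a direct geometric inspection of its possible placements shows that one of the two disks from surgery on $E \cup E_0$ along $C$ is isotopic to either $E$ or $E_0$, and that the other disk $E_1$ meets $\partial E' \cap \Sigma_0$ transversely in a single point, so $E_1$ is primitive with common dual $E'$.

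The main obstacle I expect is the disjointness step in the case $\alpha \subset E_0$: the endpoints of $\alpha_0$ now sit on the annular boundary circles rather than on the holes, so the subword argument must invoke the more intricate second clause of Lemma \ref{lem:key}, and one must carefully track how the subarcs of $\partial D$ incident to $\alpha_0$ enter and exit the $p$ rectangles so that the exponents line up to deliver the pair $xy^n x$ and $y^{n+2}$ with matching $n$. A secondary but minor point is that $D$ is allowed to be merely semiprimitive rather than primitive; this is precisely what the ``positive power'' clause in Lemma \ref{lem:key} is designed to absorb, so no separate argument is required.
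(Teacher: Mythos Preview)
Your ``key first step''—that $\alpha_0 = \partial C \cap \Sigma_0$ is disjoint from $\partial E'_0$—is false in the case $\alpha \subset E$, so the argument collapses there. Indeed, when $\alpha \subset E$ the surgery is performed on $E$, and since neither surgery disk may be isotopic to $E$ (by minimality of $|D \cap E|$), the arc $\alpha_0$, together with a subarc of $\partial E^+$, must bound a subdisk of $\Sigma_0$ containing exactly one of the annular boundary circles $\partial E_0^\pm$ (this is precisely why one surgery disk equals $E_0$). But then $\alpha_0$ separates $\partial E_0^+$ from $\partial E_0^-$ in $\Sigma_0$, so it is forced to cross every one of the $p$ spanning arcs of $\partial E'_0$. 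Your proposed contradiction via $xy$ and $xy^{-1}$ would then rule out every primitive $D$, which is absurd.

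What the paper actually controls (in the proof of Lemma~\ref{lem:common_dual}, and by the same reasoning here) is not $\alpha_0$ itself but the short \emph{connecting} arc: one writes $\partial C \cap \Sigma_0$ as the frontier of a regular neighborhood of a hole-or-boundary-circle together with an arc $\beta$ joining it to a circle of the opposite kind, and shows that $\beta$ must be disjoint from $\partial E'_0$. The $xy$/$xy^{-1}$ clause of Lemma~\ref{lem:key} applies to $\beta$ in \emph{both} cases $\alpha\subset E$ and $\alpha\subset E_0$, since in either case $\beta$ runs between a hole and a boundary circle; no appeal to the $xy^nx$/$y^{n+2}$ clause is needed. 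Once $\beta$ avoids $\partial E'_0$ it lies in the single rectangle containing the holes, and then $\partial E_1$ visibly meets $\partial E'$ exactly once. So the fix is to replace your disjointness claim for $\alpha_0$ with the corresponding claim for $\beta$, after first observing (from the enclosed-component count in $\Sigma_0$) that one surgery disk is $E$ or $E_0$.
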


\subsection{The link of the vertex of a primitive disk}
\label{subsec:the_link_of_a_primitive_disk}

Again, we have a genus-$2$ Heegaard splitting $(V, W; \Sigma)$ of a lens space $L = L(p, q)$ and we assume $1 \leq q \leq p/2$.
In this section, we introduce a special subcomplex of the non-separating disk complex $\mathcal D(V)$, which we will call a {\it shell} of the vertex of a primitive disk, and then develop its several properties we need.

Let $E$ be a primitive disk in $V$. Choose a dual disk $E'$ of $E$, then we have unique semiprimitive disks $E_0$ and $E'_0$ in $V$ and $W$ respectively which are disjoint from $E \cup E'$.
The circle $\partial E'_0$ is a $(p, \bar{q})$-curve on the boundary of the solid torus $\cl(V- \Nbd(E))$, where $\bar{q} \in \{q, p-q, q', p-q'\}$ and $q'$ is the unique integer satisfying  $1 \leq q' \leq p/2$ and $qq' \equiv \pm 1 \pmod p$.
We first assume that $\partial E'_0$ is a $(p, q)$-curve.
Assigning symbols $x$ and $y$ to oriented $\partial E'$ and $\partial E'_0$ respectively as in the previous sections, any oriented simple closed curve on $\partial W$ represents an element of the free group $\pi_1 (W) = \langle x, y \rangle$.
We simply denote the circles $\partial E'$ and $\partial E'_0$ by $x$ and $y$ respectively.
The circle $y$ is disjoint from $\partial E$ and intersects $\partial E_0$ in $p$ points, and $x$ is disjoint from $\partial E_0$ and intersects $\partial E$ in a single point.
Thus we may assume that $\partial E_0$ and $\partial E$ determine the elements of the form $y^p$ and $x$ respectively.

Let $\Sigma_0$ be the $4$-holed sphere $\partial V$ cut off by $\partial E \cup \partial E_0$.
Denote by $e^\pm$ the boundary circles of $\Sigma_0$ came from $\partial E$ and similarly $e_0^\pm$ came from $\partial E_0$.
The $4$-holed sphere $\Sigma_0$ can be regarded as a $2$-holed annulus where the two boundary circles are $e^\pm_0$ and the two holes $e^\pm$.
Then the circle $y$ in $\Sigma_0$ is the union of $p$ spanning arcs which cuts the annulus into $p$ rectangles, and $x$ is a single arc connecting two holes $e^\pm$, where $x \cup e^\pm$ is contained in a single rectangle (see the surface $\Sigma_0$ in Figure \ref{sequence}).

\begin{center}
\begin{overpic}[width=12.5cm, clip]{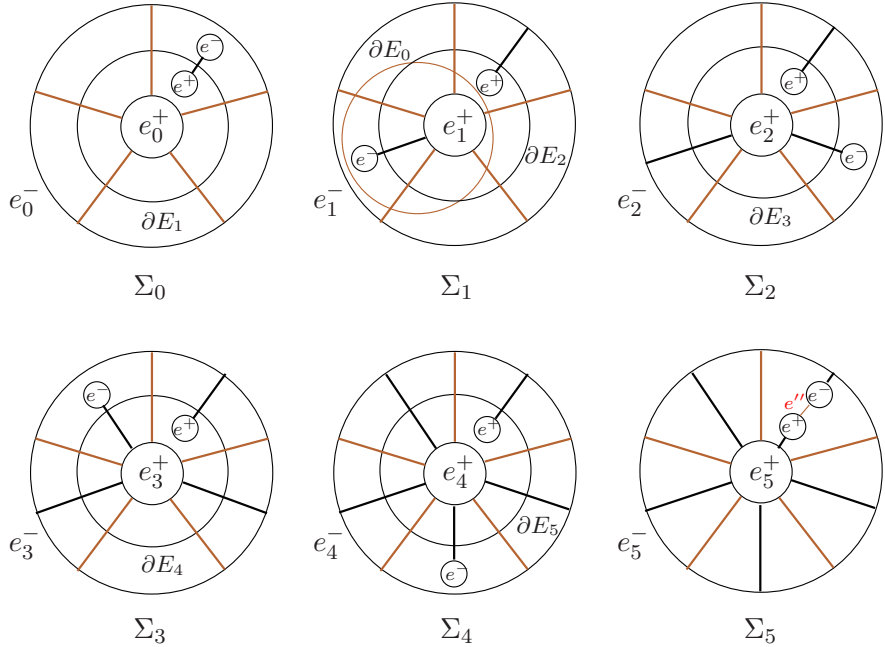}
  \linethickness{3pt}
    \put(57,130){$\Sigma_0$}
    \put(173,130){$\Sigma_1$}
    \put(288,130){$\Sigma_2$}
    \put(57,0){$\Sigma_3$}
    \put(173,0){$\Sigma_4$}
    \put(288,0){$\Sigma_5$}
    \put(59,191){$e_0^+$}
    \put(174,191){$e_1^+$}
    \put(290,191){$e_2^+$}
    \put(59,60){$e_3^+$}
    \put(174,60){$e_4^+$}
    \put(290,60){$e_5^+$}
    \put(10,163){$e_0^-$}
    \put(125,163){$e_1^-$}
    \put(240,163){$e_2^-$}
    \put(10,33){$e_3^-$}
    \put(125,33){$e_4^-$}
    \put(240,33){$e_5^-$}
    \put(72,207){\tiny $e^+$}
    \put(188,207){\tiny $e^+$}
    \put(303,208){\tiny $e^+$}
    \put(72,77){\tiny $e^+$}
    \put(187,77){\tiny $e^+$}
    \put(302,78){\tiny $e^+$}
    \put(82,222){\tiny $e^-$}
    \put(141,179){\tiny $e^-$}
    \put(326,180){\tiny $e^-$}
    \put(39,90){\tiny $e^-$}
    \put(175,22){\tiny $e^-$}
    \put(312,90){\tiny $e^-$}
    \put(60,155){\footnotesize $\partial E_1$}
    \put(146,220){\footnotesize $\partial E_0$}
    \put(205,180){\footnotesize $\partial E_2$}
    \put(290,157){\footnotesize $\partial E_3$}
    \put(60,25){\footnotesize $\partial E_4$}
    \put(202,40){\footnotesize $\partial E_5$}
    \put(303,87){\tiny \color{red} $e''$}
\end{overpic}
\captionof{figure}{The disks in a $(5, 2)$-shell in $\mathcal D(V)$ for $L(5, 2)$.}
\label{sequence}
\end{center}

Any non-separating disk in $V$ disjoint from $E \cup E_0$ and not isotopic to either of $E$ and $E_0$ is determined by an arc properly embedded in $\Sigma_0$ connecting one of $e^\pm$ and one of $e^\pm_0$.
That is, the boundary circle of such a disk is the frontier of a regular neighborhood of the union of the arc and the two circles connected by the arc in $\Sigma_0$.
Choose such an arc $\alpha_0$ so that $\alpha_0$ is disjoint from $y$, and denote by $E_1$ the non-separating disk determined by $\alpha_0$.
Observe that there are infinitely many choices of such arcs $\alpha_0$ up to isotopy, and so are the disks $E_1$.
But the element represented by $\partial E_1$ has one of the forms $x^{\pm 1} y^{\pm p}$, so we may assume that $\partial E_1$ represents $xy^p$ by changing the orientations if necessary.

Next, let $\Sigma_1$ be the $4$-holed sphere $\partial V$ cut off by $\partial E \cup \partial E_1$.
As in the case of $\Sigma_0$, consider $\Sigma_1$ as a $2$-holed annulus with boundaries $e^\pm_1$ and with two holes $e^\pm$ where $e^\pm_1$ came from $\partial E_1$.
Then the circle $y$ cuts off $\Sigma_1$ into $p$ rectangles as in the case of $\Sigma_0$, but two holes $e^+$ and $e^-$ are now contained in different rectangles.
In particular, we can give labels $0,1,\ldots, p-1$ to the rectangles consecutively so that $e^+$ lies in the $0$-th rectangle while $e^-$ in the $q$-th rectangle.
The circle $x$ in $\Sigma_1$ is the union of two arcs connecting $e^\pm_1$ and $e^\pm$ contained in the $0$-th and $p$-th rectangles respectively.

Now consider a properly embedded arc in $\Sigma_1$ connecting one of $e^\pm$ and one of $e_1^\pm$.
Choose such an arc $\alpha_1$ so that $\alpha_1$ is disjoint from $y$ and parallel to none of the two arcs of $x \cap \Sigma_1$.
Then $\alpha_1$ determines a non-separating disk, denoted by $E_2$, whose boundary circle is the frontier of a regular neighborhood of the union of $\alpha_1$ and the two circles connected by $\alpha_1$.
(If $\alpha_1$ is isotopic to one of the two arcs $x \cap \Sigma_1$, then the resulting disk is $E_0$.)
Observe that $\partial E_2$ represents an element of the form $xy^q xy^{p-q}$ (see the surface $\Sigma_1$ in Figure \ref{sequence}).

We continue this process in the same way.
Then $\Sigma_2$ is the $4$-holed sphere $\partial V$ cut off by $\partial E \cup \partial E_2$, and we choose an arc $\alpha_2$ in $\Sigma_2$ disjoint from $y$ and parallel to none of the arcs $x \cap \Sigma_2$, which determines the disk $E_3$.
The boundary circle $\partial E_3$ represents an element of the form $xy^qxy^qxy^{p-2q}$.
In general, we have a non-separating disk $E_j$ whose boundary circle lies in the $4$-holed sphere $\Sigma_{j-1}$.
We finish the process in the $p$-th step to have the disk $E_p$ whose boundary circle lies in $\Sigma_{p-1}$.
The disk $E_{p-1}$ and $E_p$ represent elements of the form $(xy)^{p-q}y(xy)^{q-1}$ and $(xy)^p$ respectively.
Observe that there are infinitely many choices of the arc $\alpha_0$, and so choices of the disk $E_1$ as we have seen, but once $E_1$ have been chosen, the next disks $E_j$ for each $j \in \{1, 2, \cdots, p-1\}$ are uniquely determined.

\begin{center}
\begin{overpic}[width=6cm, clip]{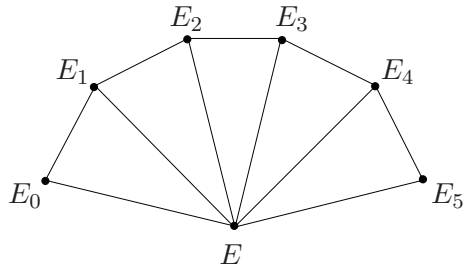}
  \linethickness{3pt}
  \put(80,0){$E$}
  \put(0,23){$E_0$}
  \put(18,70){$E_1$}
  \put(61,90){$E_2$}
  \put(101,90){$E_3$}
  \put(141,70){$E_4$}
  \put(160,23){$E_5$}
\end{overpic}
\captionof{figure}{A $(5, 2)$-shell.}
\label{fig:shell}
\end{center}

We call the full subcomplex of $\mathcal D(V)$ spanned by the vertices $E_0, E_1, \cdots, E_p$ and $E$ a {\it shell} centered at the primitive disk $E$ and denote it simply by $\mathcal S_E = \{E_0, E_1, \cdots, E_p\}$.
In particular, since the circle $\partial E'_0$ is assumed to be a $(p, q)$-curve in the beginning of the construction, the shell $\mathcal S_E$ is called a $(p, q)$-shell.
In general, given a genus-$2$ splitting of the lens space $L(p, q)$, we might have $(p, \bar{q})$-shell by the same construction, where $\bar{q} \in \{q, p-q, q', p-q'\}$ and $q'$ is the unique integer satisfying  $1 \leq q' \leq p/2$ and $qq' \equiv \pm 1 \pmod p$.
We observe that there exist infinitely many shells centered at any primitive disk $E$ by the choice of a dual disk $E'$.
Further there exist infinitely many shells centered at $E$ containing the vertex of a semiprimitive disk $E_0$ disjoint from $E$. That is, there are infinitely many choices of the primitive disks $E_1$ disjoint from $E \cup E_0$.
On the contrary, once the disk $E_1$ is chosen, the shell centered at $E$ and containing $E_0$ and $E_1$ is uniquely determined.
Figure \ref{fig:shell} illustrates a $(5, 2)$-shell in $\mathcal D(V)$ in the splitting of $L(5, 2)$.

\begin{remark}
For any consecutive vertices $E_j$, $E_{j+1}$ and $E_{j+2}$ in a shell $\mathcal S_E = \{E_0$, $E_1, \cdots, E_p\}$, the disk $E_j$ is disjoint from $E_{j+1}$, and intersects $E_{j+2}$ in a single arc for each $j \in \{0, 1, \cdots, p-2\}$.
For example, see $\partial E_0$, $\partial E_2$ and $\partial E_1$ ($= e_1^{\pm}$) in $\Sigma_1$ in Figure \ref{sequence}.
In general, we have $|E_i \cap E_j| = j - i -1$ for $0 \leq i < j \leq p$.
This is obvious from the construction.
Figure \ref{intersection} illustrates intersections of $E_j$ with $E_{j+2}$, $E_{j+3}$ and $E_{j+4}$ in the $3$-balls $V$ cut off by $E \cup E_{j+1}$, $E \cup E_{j+2}$ and $E \cup E_{j+3}$ respectively.
\label{remark:intersections_in_a_shell}
\end{remark}

\begin{center}
\begin{overpic}[width=12.5cm, clip]{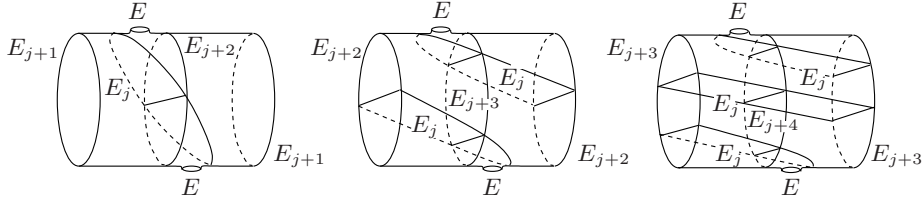}
  \linethickness{3pt}
  \put(70,2){\footnotesize $E$}
  \put(50,69){\footnotesize $E$}
  \put(4,55){\footnotesize $E_{j+1}$}
  \put(105,15){\footnotesize $E_{j+1}$}
  \put(71,56){\footnotesize $E_{j+2}$}
  \put(41,40){\footnotesize $E_j$}

  \put(184,2){\footnotesize $E$}
  \put(165,69){\footnotesize $E$}
  \put(119,55){\footnotesize $E_{j+2}$}
  \put(220,15){\footnotesize $E_{j+2}$}
  \put(171,36){\footnotesize $E_{j+3}$}
  \put(190,44){\footnotesize $E_j$}
  \put(157,26){\footnotesize $E_j$}

  \put(297,2){\footnotesize $E$}
  \put(277,69){\footnotesize $E$}
  \put(231,55){\footnotesize $E_{j+3}$}
  \put(331,15){\footnotesize $E_{j+3}$}
  \put(284,28){\footnotesize $E_{j+4}$}
  \put(304,44){\footnotesize $E_j$}
  \put(271,34){\footnotesize $E_j$}
  \put(272,15){\footnotesize $E_j$}
\end{overpic}
\captionof{figure}{Intersections of $E_j$ with $E_{j+2}$, $E_{j+3}$ and $E_{j+4}$.}
\label{intersection}
\end{center}

\begin{lemma}
Let $\mathcal S_E = \{E_0, E_1, \cdots, E_{p-1}, E_p\}$ be a $(p, q)$-shell
centered at a primitive disk $E$ in $V$.
Then we have
\begin{enumerate}
\item $E_0$ and $E_p$ are semiprimitive.
\item $E_j$ is primitive if and only if $j \in \{1, q', p-q', p-1\}$ where $q'$ is the unique integer satisfying $qq' \equiv \pm1 \pmod p$ and $1 \leq q' \leq p/2$.
\end{enumerate}
\label{lem:sequence}
\end{lemma}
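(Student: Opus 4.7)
The proof proceeds by an algebraic identification of each $\partial E_j$ with the word $w_j$ of the $(p,q)$-sequence, followed by applications of Lemmas \ref{lem:primitive_element} and \ref{lem:four_primitives}. Write $x=\partial E'$ and $y=\partial E'_0$, so that $\pi_1(W)=\langle x,y\rangle$, and set $z:=xy$. Since $\{z,y\}$ is also a free basis of $\pi_1(W)$, the substitution $z\mapsto xy$, $y\mapsto y$ defines an automorphism of the free group of rank two and, in particular, preserves primitivity. The first task is to verify, by induction on $j$, that $\partial E_j$ represents the image of $w_j$ (the $j$th word of the $(p,q)$-sequence of the pair $(z,y)$) under this substitution. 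The base cases $j=0,1,2,p-1,p$ follow from the explicit forms recorded in the shell construction, and the inductive step uses the combinatorics of the $4$-holed sphere $\Sigma_j$: the circle $y$ cuts $\Sigma_j$ into $p$ rectangles, and the condition that the arc $\alpha_j$ be disjoint from $y$ and nonparallel to any arc of $x\cap\Sigma_j$ forces the insertion of exactly one new $z$-symbol at cyclic position $1+jq\pmod p$, which is precisely the shift producing $w_{j+1}$ from $w_j$.

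With this identification, part~(2) is immediate. By Lemma \ref{lem:primitive_element}, $E_j$ is primitive iff $\partial E_j$ is a primitive element of $\pi_1(W)$, and by the preceding step this is equivalent to $w_j$ being primitive in $\langle z,y\rangle$. Lemma \ref{lem:four_primitives} then yields the condition $j\in\{1,q',p-q',p-1\}$.

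For part~(1), the disk $E_0$ is semiprimitive because the primitive disk $E'\subset W$ is disjoint from $E_0$ by the very construction of the shell. For $E_p$, the formula $\partial E_p=(xy)^p=z^p$ exhibits $\partial E_p$ as the $p$th power of the primitive element $z$ in the basis $\{z,y\}$, placing $E_p$ in algebraic position symmetric to $E_0$ with the roles of $y$ and $z$ interchanged. To exhibit a primitive disk in $W$ disjoint from $E_p$, I will produce a simple closed curve on $\Sigma$ that is disjoint from $\partial E_p$, bounds a disk in $W$, and represents a primitive element of $\pi_1(V)$, by band-summing $E'$ and $E'_0$ along an arc in $\Sigma$ chosen with reference to the position of $\partial E_p$ so that the resulting boundary circle represents $z=xy$ and lies in the complement of $\partial E_p$. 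I expect this last step to be the main obstacle: although the algebraic analogy with $E_0$ is transparent, the geometric realization requires a careful analysis of how $\partial E_p$ sits with respect to $\partial E'$ and $\partial E'_0$ on $\Sigma$, in order to produce a band-sum arc yielding a boundary disjoint from $\partial E_p$.
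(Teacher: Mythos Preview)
Your treatment of part~(2) and of $E_0$ in part~(1) is essentially identical to the paper's: identify $\partial E_j$ with $w_j$ under the substitution $z=xy$ and invoke Lemmas~\ref{lem:four_primitives} and~\ref{lem:primitive_element}. This is fine.

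For $E_p$, however, there is a genuine confusion in your plan. You correctly state the target: a simple closed curve on $\Sigma$ that bounds a disk in $W$, is disjoint from $\partial E_p$, and represents a primitive element of $\pi_1(V)$. But you then say this curve should ``represent $z=xy$''. That cannot happen: $z=xy$ lives in $\pi_1(W)=\langle x,y\rangle$, and the boundary of any disk in $W$ is trivial in $\pi_1(W)$. The algebraic symmetry $\partial E_0=y^p$, $\partial E_p=z^p$ you observe is a statement in $\pi_1(W)$; it does not by itself tell you anything about primitivity of a \emph{disk in $W$}, which is governed by $\pi_1(V)$. So the ``band-sum $E'$ and $E'_0$ to get a curve representing $xy$'' idea is aiming at the wrong target.

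The paper's argument for $E_p$ avoids this entirely and is much more direct. One simply looks at the $4$-holed sphere $\Sigma_p$ (the surface $\Sigma$ cut along $\partial E\cup\partial E_p$) and observes that there is an arc connecting the two holes $e^+$, $e^-$ coming from $\partial E$ which is disjoint from $x\cup y\cup e_p^+\cup e_p^-$. Gluing back, this gives a simple closed curve $e''\subset\Sigma$ disjoint from $\partial E'\cup\partial E'_0$; since $\{E',E'_0\}$ is a complete meridian system of $W$, the curve $e''$ bounds a disk $E''$ in $W$. By construction $e''$ meets $\partial E$ in exactly one point, so $E''$ is primitive in $W$ with dual disk $E$, and $E''$ is disjoint from $E_p$. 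No band-sum and no reference to $\pi_1(V)$ are needed; the primitivity of $E''$ is witnessed geometrically by the single intersection with $\partial E$.
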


\begin{proof}
(1) $E_0$ is a semiprimitive disk disjoint from $E'$ from the construction.
For the disk $E_p$, it is easy to find a circle $e''$ in $\Sigma$ such that $e'' \cap \Sigma_p$ is an arc which connects the two holes $e^+$ and $e^-$ and is disjoint from $x \cup y \cup e^+_p \cup e^-_p$ (see the arc $e''$ in the surface $\Sigma_5$ in Figure \ref{sequence}).
Cutting $W$ along $E' \cup E'_0$, we have a $3$-ball $B$, and the circle $e''$ lies in $\partial B$.
Thus $e''$ bounds a disk $E''$ in $W$ which is primitive since $e''$ intersects $\partial E$ in a single point.
The disk $E_p$ is disjoint from $E''$ and so is semiprimitive.

(2) From the construction, each circle $\partial E_j$ represents the element $w_j$ in the $(p, q)$-sequence in section \ref{subsec:primitive_elements}, by the substitution of $z$ for $xy$.
Thus the conclusion follows by Lemma \ref{lem:four_primitives} with Lemma \ref{lem:primitive_element}.
\end{proof}

\begin{remark}
We have constructed a $(p, q)$-shell $\mathcal S_E$ by assuming $\partial E'_0$ is a $(p, q)$-curve in the beginning of the construction.
If $\mathcal S_E$ is a $(p, p-q)$-shell, then we have the same conclusion of Lemma \ref{lem:sequence}. If $\mathcal S_E$ is a $(p, q')$-shell or a $(p, p-q')$-shell, the Lemma \ref{lem:sequence} still holds by exchanging $q$ and $q'$ in the conclusion.
Also, we observe that a $(p, q)$-shell $\mathcal S_E = \{E_0, E_1, \cdots, E_{p-1}, E_p\}$ is identified with the $(p, p-q)$-shell $\mathcal S'_E = \{E_p, E_{p-1}, \cdots, E_1, E_0\}$ centered at the same $E$ if we choose the dual disk $E''$ of $E$ and then choose the primitive disk $E_{p-1}$ disjoint from $E \cup E_p$.
\end{remark}

The following is a generalization of Lemma \ref{lem:first_surgery}.

\begin{lemma}
Let $\mathcal S_E = \{E_0, E_1, \cdots, E_{p-1}, E_p\}$ be a shell centered at a primitive disk $E$ in $V$, and let $D$ be a primitive or semiprimitive disk in $V$.
For $j \in \{1, 2, \cdots, p-1\}$,
\begin{enumerate}
\item if $D$ is disjoint from $E \cup E_j$ and is isotopic to none of $E$ and $E_j$, then $D$ is isotopic to either $E_{j-1}$ or $E_{j+1}$, and
\item if $D$ intersects $E \cup E_j$, then one of the disks from surgery on $E \cup E_j$ along an outermost subdisk $C$ of $D$ cut off by $D \cap (E \cup E_j)$ is either $E$ or $E_j$, and the other is
either $E_{j-1}$ or $E_{j+1}$.
\end{enumerate}
\label{lem:surgery_on_primitive}
\end{lemma}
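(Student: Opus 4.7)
The plan is to work directly in the $4$-holed sphere $\Sigma_j = \cl(\partial V \setminus \Nbd(\partial E \cup \partial E_j))$, whose combinatorial structure is exactly what the shell construction provides. Namely, $y = \partial E'_0$ cuts $\Sigma_j$ into $p$ rectangles via $p$ spanning arcs between $e_j^+$ and $e_j^-$, the holes $e^\pm$ coming from $\partial E$ sit in prescribed rectangles, and $x = \partial E'$ appears as a family of arcs joining these holes to $e_j^\pm$. The disks $E_{j-1}$ and $E_{j+1}$ correspond to specific arcs $\beta_{j-1}, \beta_{j+1}$ in $\Sigma_j$, each joining one of $e^\pm$ to one of $e_j^\pm$; by construction of the shell these are precisely the arcs disjoint from $y$ and not parallel to any arc of $x \cap \Sigma_j$. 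Throughout, Lemma \ref{lem:key} is the central combinatorial obstruction: the cyclic word in $\langle x, y \rangle$ read off from $\partial D$ cannot contain both $xy$ and $xy^{-1}$, nor both $xy^n x$ and $y^{n+2}$ for any $n \geq 0$, because $D$ is primitive or semiprimitive (so $\partial D$ represents a positive power of a primitive element).

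For part (1), suppose $D$ is disjoint from $E \cup E_j$ and isotopic to neither. Then $\partial D$ is the frontier of a regular neighborhood in $\Sigma_j$ of the union of one of $e^\pm$, one of $e_j^\pm$, and an arc $\gamma$ joining them. I would argue by a direct case check on how $\gamma$ enters and exits the rectangles of $\Sigma_j$: unless $\gamma$ is isotopic to $\beta_{j-1}$ or $\beta_{j+1}$, the cyclic word associated to $\partial D$ contains one of the two forbidden patterns, contradicting Lemma \ref{lem:key}. Hence $D$ is isotopic to $E_{j-1}$ or $E_{j+1}$.

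For part (2), let $C$ be an outermost subdisk of $D$ cut off by an outermost arc $\alpha$ of $D \cap (E \cup E_j)$ in $D$. By symmetry we may assume $\alpha \subset E_j$. Surgery on $E_j$ along $C$ produces two disks $E_a$ and $E_b$, both disjoint from $E \cup E_j$. The arc $\partial C \cap \Sigma_j$ lies in $\Sigma_j$ with both endpoints on $e_j^+ \cup e_j^-$, and running the same Lemma \ref{lem:key} analysis as in the proof of Lemma \ref{lem:common_dual} on the subword of $\partial D$ traced by $\partial C$ forces $\partial C \cap \Sigma_j$ into a specific configuration relative to the hole $e^\pm$. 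From this configuration one reads off that one of $E_a$, $E_b$, say $E_a$, is isotopic to $E$. The remaining disk $E_b$ is then disjoint from $E \cup E_j$ and not isotopic to either, and the primitivity or semiprimitivity of $E_b$ is a further consequence of the Lemma \ref{lem:key} analysis applied to the subword of $\partial D$ cut off by the complementary outermost arc. Part (1) applied to $E_b$ then gives $E_b$ isotopic to $E_{j-1}$ or $E_{j+1}$.

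The hardest step is the combinatorial case check in part (1): for each $j$ one must enumerate, up to isotopy, all arcs $\gamma$ in $\Sigma_j$ joining a hole to a boundary, track how many $y$-arcs the arc crosses between successive $x$-crossings, and verify that only the two arcs $\beta_{j \pm 1}$ avoid both forbidden patterns of Lemma \ref{lem:key}. Because the positions of $e^\pm$ and of the arcs of $x \cap \Sigma_j$ depend on $j$ and $q$, this requires careful bookkeeping, but in every case the admissible arcs are exactly those supplied by the shell construction.
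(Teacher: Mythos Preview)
Your overall strategy---work in $\Sigma_j$, represent $\partial D$ (or $\partial C$) via an arc $\gamma$, and use Lemma~\ref{lem:key} to constrain $\gamma$---is exactly the paper's approach, and part~(1) is essentially correct. However, you make it sound far harder than it is. There is no case-check ``for each $j$'' and no need to track $x$-crossings: the single observation, identical to the one in the proof of Lemma~\ref{lem:common_dual}, is that if $\gamma$ meets $y=\partial E_0'$ at all then the word for $\partial D$ already contains both $xy$ and $xy^{-1}$, contradicting Lemma~\ref{lem:key}. So $\gamma$ is disjoint from $y$, and the shell construction then leaves exactly the two arcs yielding $E_{j-1}$ and $E_{j+1}$. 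That is the whole argument.

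In part~(2) there is a genuine gap. After you force the configuration of $\partial C\cap\Sigma_j$ and read off that one surgery disk is $E$, you try to show the other disk $E_b$ is (semi)primitive and then invoke part~(1). But Lemma~\ref{lem:key} is purely an \emph{obstruction}: it can never certify that $\partial E_b$ represents a (power of a) primitive element, so your step ``the primitivity or semiprimitivity of $E_b$ is a further consequence of the Lemma~\ref{lem:key} analysis'' does not go through, and part~(1) cannot be applied to $E_b$. The paper avoids this detour entirely: once the Lemma~\ref{lem:key} argument shows that the connecting arc associated to $\partial C\cap\Sigma_j$ is disjoint from $y$, \emph{both} surgery disks are determined directly from the shell picture---one is $E$ (or $E_j$), the other is $E_{j-1}$ or $E_{j+1}$---with no need to establish primitivity of $E_b$ first. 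Replace your steps~4--6 by this direct reading and the proof is complete.
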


\begin{proof}
Suppose that $D$ is disjoint from $E \cup E_j$.
The boundary circle $\partial D$ lies in the $2$-holed annulus $\Sigma_j$.
Thus $\partial D$ can be considered as the frontier of the union of one hole and one boundary circle of $\Sigma_j$, and an arc $\alpha_j$ connecting them.
By the same argument for the proof of Lemmas \ref{lem:common_dual} and \ref{lem:first_surgery}, the arc $\alpha_j$ cannot intersect the arcs of $\partial E'_0 \cap \Sigma_j$ otherwise $D$ would not be (semi)primitive.
Thus the disk $D$ must be either $E_{j-1}$ or $E_{j+1}$.
(Note that if both of $E_{j-1}$ and $E_{j+1}$ are not primitive, then we can say that such a primitive disk $D$ does not exist.)
The second statement can be proved in the same manner by considering the arc $\partial C \cap \Sigma_j$  for the outermost subdisk $C$ of $D$.
\end{proof}

\subsection{Primitive disks intersecting each other}
\label{subsec:primitive_disks_intersecting_each_other}

The following is the main theorem of this section.

\begin{theorem}
Given a lens space $L(p, q)$, $1 \leq q \leq p/2$, with a genus-$2$ Heegaard splitting $(V, W; \Sigma)$, suppose that $p \equiv \pm 1 \pmod{q}$.
Let $D$ and $E$ be primitive disks in $V$ which intersect each other transversely and minimally.
Then at least one of the two disks from surgery on $E$ along an outermost subdisk of $D$ cut off by $D \cap E$ is primitive.\par
\label{thm:primitive}
\end{theorem}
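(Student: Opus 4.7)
My plan is to reduce to an application of Lemma~\ref{lem:first_surgery} using a shell centered at $E$. First, fix a dual disk $E'$ of $E$, and let $E_0 \subset V$, $E'_0 \subset W$ be the unique semiprimitive disks disjoint from $E \cup E'$. Build a shell $\mathcal S_E = \{E_0, E_1, \ldots, E_p\}$ centered at $E$; by Lemma~\ref{lem:sequence}, its primitive members are $E_1$, $E_{q'}$, $E_{p-q'}$, $E_{p-1}$, where $q'$ satisfies $qq' \equiv \pm 1 \pmod{p}$ and $1 \leq q' \leq p/2$. Assuming $q \geq 2$, the hypothesis $p \equiv \pm 1 \pmod{q}$ translates into the sharp arithmetic identity $qq' = p \mp 1$, placing strong constraints on the primitive positions. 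With $\{E', E'_0\}$ a complete meridian system of $W$, set $x = \partial E'$ and $y = \partial E'_0$, so that $\pi_1(W) = \langle x, y \rangle$, $\partial E$ represents $x$, $\partial E_0$ represents $y^p$, and $\partial D$ represents a primitive word $\omega$ of $\langle x, y \rangle$.

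The argument splits on whether $D$ meets $E_0$. In the easy case $D \cap E_0 = \emptyset$, any outermost subdisk $C$ of $D$ cut off by $D \cap E$ is automatically also outermost for $D \cap (E \cup E_0)$, and surgery on $E$ along $C$ agrees with surgery on $E \cup E_0$ along $C$. Lemma~\ref{lem:first_surgery} then says one of the resulting disks is isotopic to $E$ or to $E_0$, while the other is primitive. Minimality of $|D \cap E|$ excludes the first being $E$, so it must be $E_0$, and the other surgery disk is the primitive we seek.

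In the hard case $|D \cap E_0| > 0$, I would induct on $|D \cap E_0|$. Consider an outermost subdisk $C_0$ of $D$ cut off by $D \cap (E \cup E_0)$; its intersection arc lies either on $E$ or on $E_0$. The first subcase reduces immediately to the easy case above. In the second, Lemma~\ref{lem:first_surgery} produces, besides a disk isotopic to $E$ or $E_0$, a new primitive disk $\tilde{E}$ disjoint from $E \cup E_0$ and sharing a dual disk with $E$. By Lemma~\ref{lem:common_dual}, $E$ and $\tilde{E}$ admit a common semiprimitive partner in $V$; using this partner to build an updated shell, the new analog of $E_0$ intersects $D$ in strictly fewer arcs, driving the induction to the easy-case terminus.

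The hypothesis $p \equiv \pm 1 \pmod{q}$ enters precisely in guaranteeing that the iterative reduction never leaves the primitive disk complex: the identity $qq' = p \mp 1$ arranges the primitive positions $\{1, q', p-q', p-1\}$ in each shell along the induction so that enough neighbors produce primitive surgery outcomes. I expect the main obstacle to lie in controlling primitivity of the intermediate surgery outcomes; this is done via Lemma~\ref{lem:key} applied to the primitive element $\omega$ represented by $\partial D$, ruling out the forbidden combinations of both $xy$ and $xy^{-1}$, or of both $xy^nx$ and $y^{n+2}$. The heart of the argument is that Lemma~\ref{lem:key} together with the arithmetic on $q, q'$ jointly force a primitive outcome at each inductive step.
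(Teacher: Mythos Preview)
Your inductive scheme in the hard case has a genuine gap. After surgering $E_0$ along the outermost subdisk $C_0$ incident to $E_0$, you obtain $E$ and a primitive $\tilde{E}$; this $\tilde{E}$ is precisely the disk $E_1$ in the shell $\mathcal{S}_E$. You then invoke Lemma~\ref{lem:common_dual} to produce a semiprimitive disk disjoint from $E \cup E_1$. But by Lemma~\ref{lem:surgery_on_primitive}, the only non-separating disks in $V$ disjoint from $E \cup E_1$ and not isotopic to either are $E_0$ and $E_2$, and by Lemma~\ref{lem:sequence} the disk $E_2$ is semiprimitive only when $p = 2$. Hence for $p \geq 3$ your ``new analog of $E_0$'' is the original $E_0$, the quantity $|D \cap E_0|$ does not drop, and the induction stalls. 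A related issue is that the theorem concerns an \emph{arbitrary} outermost subdisk $C$ of $D$ cut off by $D \cap E$ (this is what the paper proves and what Theorem~\ref{thm:surgery} requires); your subcase analysis instead selects a particular outermost subdisk $C_0$ for $D \cap (E \cup E_0)$, which need not coincide with the given $C$.

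The paper's argument is organized differently, and this is where the real content lies. One fixes the outermost subdisk $C$ first and then chooses the dual disk $E'$ (hence $E_0$) to \emph{minimize} $|C \cap E_0|$; this extremal choice replaces your induction. If $C$ still meets $E_0$, one walks through the shell $E_0, E_1, \ldots, E_p$ via successive outermost subdisks of $C$ (using Lemma~\ref{lem:surgery_on_primitive}) to show that the two disks from surgery on $E$ along $C$ are exactly $E_j$ and $E_{j+1}$ for some $1 \leq j < p/2$, the upper bound coming from the minimality of $|C \cap E_0|$ compared against the other semiprimitive end $E_p$. The hypothesis $p \equiv \pm 1 \pmod q$ then enters in a sharp and unavoidable way: it gives $p = qq' \pm 1$, so $\partial E_{j+1}$ has an explicit word form in $x, y$, and Lemma~\ref{lem:key} applied to $\partial D$ rules out every $j$ except $j \in \{1, q'-1, q'\}$, forcing one of $E_j, E_{j+1}$ to be the primitive disk $E_1$ or $E_{q'}$. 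Your proposal alludes to this last step but does not supply it; it is the heart of the proof and is not bypassed by the induction you describe.
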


\begin{proof}
Let $C$ be an outermost subdisk of $D$ cut off by $D \cap E$.
The choice of a dual disk $E'$ of $E$ determines a unique semiprimitive disk $E_0$ in $V$ which is disjoint from $E \cup E'$.
Among all the dual disks of $E$, choose one, denoted by $E'$ again, so that the resulting semiprimitive disk $E_0$ intersects $C$ minimally.
If $C$ is disjoint from $E_0$, then, by Lemma \ref{lem:first_surgery}, the disk from surgery on $E$ along $C$ other than $E_0$ is primitive, having the common dual disk $E'$ with $E$, and so we are done.

From now on, we assume that $C$ intersects $E_0$.
Then one of the disks from surgery on $E_0$ along an outermost subdisk $C_0$ of $C$ cut off by $C \cap E_0$ is $E$, and the other, say $E_1$, is primitive having the common dual disk $E'$ with $E$, by Lemma \ref{lem:first_surgery} again.
Then we have the shell $\mathcal S_E = \{E_0, E_1, E_2, \cdots, E_p\}$ centered at $E$.
Let $E'_0$ be the unique semiprimitive disk in $W$ disjoint from $E \cup E'$.
The circle $\partial E'_0$ would be a $(p, \bar q)$-curve on the boundary of the solid torus $\cl(V- \Nbd(E \cup E'))$ for some $\bar q \in \{q, q', p-q', p-q\}$, where $q'$ satisfies $1 \leq q' \leq p/2$ and $qq' \equiv \pm 1 \pmod{p}$.
We will consider only the case of $\bar q = q$.
That is, $\partial E'_0$ is a $(p, q)$-curve and so $\mathcal S_E$ is a $(p, q)$-shell.
The proof is easily adapted for the other cases.

If $C$ intersects $E_1$, then one of the disks from surgery on $E_1$ along an outermost subdisk $C_1$ of $C$ cut off by $C \cap E_1$ is $E$,
and the other is either $E_0$ or $E_2$ by Lemma \ref{lem:surgery_on_primitive}, but it is actually $E_2$ since we have $|C \cap E_1| < |C \cap E_0|$ from the surgery construction.
In general, if $C$ intersects each of $E_1, E_2, \cdots, E_j$, for $j \in \{1, 2, \cdots, p-1\}$, the disk from surgery on $E_j$ by an outermost subdisk $C_j$ of $C$ cut off by $C \cap E_j$, other than $E$, is $E_{j+1}$, and we have $|C \cap E_{j+1}| < |C \cap E_j|$.
Consequently, we see that $|C \cap E_p| < |C \cap E_0|$, but it contradicts the minimality of $|C \cap E_0|$ since $E_p$ is also a semiprimitive disk disjoint from $E$.
Thus, there is a disk $E_j$ for some $j \in \{1, 2, \cdots, p-1\}$ which is disjoint from $C$.

Now, denote by $E_j$ again the first disk in the sequence that is disjoint from $C$.
Then the two disks from surgery on $E$ along $C$ are $E_j$ and $E_{j+1}$, hence $C$ is also disjoint from $E_{j+1}$.
Actually they are the only disks in the sequence disjoint from $C$.
For other disks in the sequence, it is easy to see that $|C \cap E_{j-k}| = k = |C \cap E_{j+1+k}|$ (by a similar observation to the fact that $|E_i \cap E_j| = j - i -1$ for $0 \leq i < j \leq p$ in Remark \ref{remark:intersections_in_a_shell}).
If $j \geq p/2$, then we have $|C \cap E_0| = j > p-j-1 = |C \cap E_p|$, a contradiction for the minimality condition again.
Thus, $E_j$ is one of the disks in the first half of the sequence, that is, $1 \leq j < p/2$.

\smallskip

{\noindent \sc Claim.}
The disk $E_j$ is one of $E_1$, $E_{q'-1}$ or $E_{q'}$, where $q'$ is the unique integer satisfying $1 \leq q' \leq p/2$ and $qq' \equiv \pm 1 \pmod{p}$.

\smallskip

{\noindent \it Proof of Claim.}
We have assumed that $p \equiv \pm 1 \pmod{q}$ with $1 \leq q \leq p/2$, and so $q' = 1$ if $q = 1$, and $p= qq' + 1$ if $q=2$, and $p = qq' \pm 1$ if $q \geq 3$.
Assigning symbols $x$ and $y$ to oriented $\partial E'$ and $\partial E_0'$ respectively, $\partial E_{q'}$ may represent the primitive element of the form $xy^qxy^q \cdots xy^qxy^{q \pm 1}$ if $q \geq 2$ or $xy^p$ if $q = 1$.
In general, $\partial E_k$ represents an element of the form $xy^{n_1}xy^{n_2} \cdots xy^{n_k}$ for some positive integers $n_1, \cdots, n_k$ with $n_1+ \cdots + n_k = p$ for each $k \in \{1, 2, \cdots, p\}$.
Furthermore, since $C$ is disjoint from $E_j$ and $E_{j+1}$, the word determined by the arc $\partial C \cap \Sigma_j$ is of the form $y^{m_1}xy^{m_2} \cdots xy^{m_{j+1}}$ (or its reverse) when $\partial E_{j+1}$ represents an element of the form $xy^{m_1}xy^{m_2} \cdots xy^{m_{j+1}}$.

If $2 \leq j \leq q'-2$, then an element represented by $\partial E_{j+1}$ has the form $xy^qxy^q \cdots xy^q xy^{p-jq}$, and so an element represented by $\partial D$ contains $xy^qx$ and $y^{p-jq}$, which lies in the part $\partial C \cap \Sigma_j$ of $\partial D$.
We have $q' \geq 4$ in this case, and so $q \geq 2$.
Thus $p-jq = qq' \pm 1 - jq \geq q+2$.
By Lemma \ref{lem:key}, the disk $D$ cannot be primitive, a contradiction.

Suppose that $q' < j <p/2$.
First, observe that $\partial E_{q'+1}$ represents an element of the form $xy^q \cdots xy^qxy$ if $p = qq'+1$ or $xyxy^{q-1}xy^q \cdots xy^qxy^{q-1}$ if $p=qq' -1$.
Also a word represented by $\partial E_{j+1}$ is obtained by changing one $xy^q$ of a word represented by $\partial E_j$ into $xy^{q-1}xy$ or $xyxy^{q-1}$.
Thus, when we write $xy^{n_1}xy^{n_2} \cdots xy^{n_{j+1}}$ a word represented by $\partial E_{j+1}$, at least one of $n_2, n_3, \cdots, n_j$ must be $1$, and one of $n_1, n_2, \cdots, n_{j+1}$ is greater than $2$.
Since $C$ is disjoint from $E_j$ and $E_{j+1}$, the word corresponding to $\partial C \cap \Sigma_j$ is of the form $y^{n_1}xy^{n_2} \cdots xy^{n_{j+1}}$, which contains both of $xyx$ and $y^n$ for some $n > 2$.
Consequently, by Lemma \ref{lem:key}, the disk $D$ cannot be primitive, a contradiction again.

\smallskip

From the claim, at least one of the disks from surgery on $E$ along $C$ is either $E_1$ or $E_{q'}$.
The disk $E_1$ is primitive, and since we assumed that the circle $\partial E'_0$ is a $(p, q)$-curve on the boundary of the solid torus $\cl(V - \Nbd(E \cup E'))$, the disk $E_{q'}$ is also primitive by Lemma \ref{lem:sequence}, which completes the proof.
\end{proof}

In the proof of the above theorem, we assumed $\bar q = q$, which implied that a resulting disk from surgery is $E_1$ or $E_{q'}$.
The same result holds when $\bar q = p-q$.
But if we assume $\bar q \in \{q', p-q'\}$, then the resulting disk will be $E_1$ or $E_q$ which are primitive.
Together with this observation, assuming that $D$ is disjoint from $E$, and so taking the disk $D$ instead of an outermost subdisk $C$ in the proof of Therorem \ref{thm:primitive}, we have the following result.


\begin{lemma}
Given a lens space $L(p, q)$, $0 < q < p$, with a genus-$2$ Heegaard splitting $(V, W; \Sigma)$, let $\{E, D\}$ be a primitive pair of $V$.
Then there exists a unique shell $\mathcal S_E = \{E_0, E_1, \cdots, E_p\}$ centered at $E$ containing $D$. That is, $D$ is one of $E_0, E_1, \cdots, E_p$.
Furthermore, if $\mathcal S_E$ is a $(p, q)$-shell, then the vertex $D$ is one of $E_1$, $E_{q'}$, $E_{p-q'}$ or $E_{p-1}$, where $q'$ is the unique integer satisfying $1 \leq  q' \leq p/2$ and $qq' \equiv \pm 1 \pmod{p}$.
\label{lem:shell}
\end{lemma}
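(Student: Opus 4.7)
The strategy is to adapt the proof of Theorem~\ref{thm:primitive}, using the primitive disk $D$ itself in place of an outermost subdisk $C$ of $D$; this substitution is valid because $\{E, D\}$ being a primitive pair forces $D \cap E = \emptyset$, so Lemmas~\ref{lem:first_surgery} and \ref{lem:surgery_on_primitive} apply directly to $D$.

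First I would choose a dual disk $E'$ of $E$ so that the uniquely determined semiprimitive disk $E_0 \subset V$ disjoint from $E \cup E'$ minimizes $|D \cap E_0|$ over all such choices of $E'$. If $D$ is disjoint from $E_0$, then $D$ is itself a primitive disk disjoint from $E \cup E_0$, and may be taken as the disk $E_1$ in the shell construction of Subsection~\ref{subsec:the_link_of_a_primitive_disk}, giving a shell $\mathcal{S}_E$ with $D = E_1$. Otherwise, applying Lemma~\ref{lem:first_surgery} to $D$ (which meets $E_0$ but is disjoint from $E$), the surgery on $E_0$ along an outermost subdisk of $D$ returns the disk $E$ together with a primitive disk $E_1$ that shares a common dual disk with $E$; this $E_1$ serves as the next vertex of the shell $\mathcal{S}_E = \{E_0, E_1, \ldots, E_p\}$.

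Next I would iterate. Suppose inductively that $|D \cap E_0| > |D \cap E_1| > \cdots > |D \cap E_j|$ and that $D$ still meets $E_j$. By Lemma~\ref{lem:surgery_on_primitive}(2) applied to an outermost subdisk of $D$ cut off by $D \cap (E \cup E_j)$, one of the resulting surgery disks is $E$ (since $D \cap E = \emptyset$) and the other is either $E_{j-1}$ or $E_{j+1}$. The surgery strictly decreases the number of intersection arcs with $D$, and $|D \cap E_{j-1}| > |D \cap E_j|$ by the inductive hypothesis, so the new disk can only be $E_{j+1}$ and the chain extends. This iteration must terminate at some $1 \leq j \leq p-1$ with $D$ disjoint from $E_j$: otherwise we would eventually reach $E_p$ with $|D \cap E_p| < |D \cap E_0|$, yet $E_p$ is semiprimitive and disjoint from $E$ by Lemma~\ref{lem:sequence}(1), contradicting the minimality of $|D \cap E_0|$. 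At termination, Lemma~\ref{lem:surgery_on_primitive}(1) forces $D \in \{E_{j-1}, E_{j+1}\}$, and since $D$ meets $E_{j-1}$, we must have $D = E_{j+1}$; this places $D$ in $\mathcal{S}_E$.

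Since $D$ is primitive, Lemma~\ref{lem:sequence}(2) further restricts its index in a $(p,q)$-shell $\mathcal{S}_E$ to $\{1, q', p-q', p-1\}$, yielding the ``furthermore'' statement. Uniqueness of the shell follows from the construction, since a shell is determined by its triple $(E, E_0, E_1)$ and the surgery chain built from $\{E, D\}$ pins this triple down. The main obstacle is the inductive step: verifying that the new disk produced at each surgery is $E_{j+1}$ rather than $E_{j-1}$, which is handled by the strict monotonicity of intersection numbers, in parallel with the core argument of Theorem~\ref{thm:primitive}.
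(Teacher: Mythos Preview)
Your proposal is correct and follows exactly the approach the paper indicates: the paper's entire proof of this lemma is the one-line remark preceding it, namely to rerun the argument of Theorem~\ref{thm:primitive} with the disk $D$ itself in place of an outermost subdisk $C$ (legitimate because $D\cap E=\emptyset$), and you have carried this out in detail. One small point: your uniqueness sentence shows only that the construction yields \emph{a} shell, not that it is the \emph{only} shell through $D$; the cleanest way to close this is to note that, by Lemma~\ref{lem:surgery_on_primitive}(1), the pair $\{E,D\}$ already determines the two (semi)primitive neighbours $E_{j-1},E_{j+1}$, and inductively the entire shell.
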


Let $D$ be an essential disk in $V$.
We denote by $V_D$ the solid torus $\cl(V - \Nbd(D))$.
We remark that $V_D$ and its exterior form a genus-$1$
Heegaard splitting of $L(p,q)$ if and only if
$D$ is a primitive disk in $V$.
We refine the above lemma as follows.

\begin{lemma}
Given a lens space $L(p, q)$, $0 < q < p$, with a genus-$2$ Heegaard splitting $(V, W; \Sigma)$, let $\{E, D\}$ be a primitive pair of $V$.
Let $\mathcal S_E = \{E_0, E_1, \cdots, E_p\}$ and $\mathcal S_D = \{D_0, D_1, \cdots, D_p\}$ be the unique shells centered at $E$ and at $D$ containing $D$ and $E$ respectively.
Assume further that $\mathcal S_E$ is a $(p,q)$-shell.
\begin{enumerate}
\item
\label{item:dual shells for common dual}
If $\{E, D\}$ has a common dual disk, then $\mathcal S_D$ is a $(p,q)$-shell.
Further, $E$ is $D_1$ or $D_{p-1}$ and $D$ is $E_1$ or $E_{p-1}$.
\item
\label{item:dual shells for non-common dual}
If $\{E, D\}$ has no common dual disk, then $\mathcal S_D$ is a $(p,q')$-shell, where
$q'$ is the unique integer with $1 \leq  q' \leq p/2$ and $qq' \equiv \pm 1 \pmod{p}$.
Further,
$D$ is $E_{q'}$ or $E_{p-q'}$ and $E$ is $D_q$ or $D_{p-q}$.
\end{enumerate}
\label{lem:shells_crossing}
\end{lemma}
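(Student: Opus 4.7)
The plan is to distinguish the four candidate positions for $D$ in $\mathcal{S}_E$ provided by Lemma \ref{lem:shell} according to whether $\{E, D\}$ admits a common dual disk, and then transfer the analysis to $\mathcal{S}_D$ by symmetry. The key intermediate claim is: $\{E, D\}$ has a common dual disk if and only if $D \in \{E_1, E_{p-1}\}$. The ``if'' direction is immediate from Lemma \ref{lem:common_dual}, since $E_0$ is a semiprimitive disk in $V$ disjoint from $E \cup E_1$, and $E_p$ is disjoint from $E \cup E_{p-1}$. For the converse, a common dual yields by Lemma \ref{lem:common_dual} a semiprimitive disk $E_0^*$ in $V$ disjoint from $E \cup D$; using $E_0^*$ and $D$ as the first two non-central vertices of a shell centered at $E$, the uniqueness part of Lemma \ref{lem:shell} forces this shell to coincide, as an unordered subcomplex, with $\mathcal{S}_E$. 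Hence $E_0^* \in \{E_0, E_p\}$ and $D \in \{E_1, E_{p-1}\}$ correspondingly.

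For Case (1), the above gives $D \in \{E_1, E_{p-1}\}$, and applying the same characterization symmetrically to $\{D, E\}$ and $\mathcal{S}_D$ yields $E \in \{D_1, D_{p-1}\}$. To see that $\mathcal{S}_D$ is again a $(p,q)$-shell, I would choose a common dual disk $E'$ of $E, D$ and construct $\mathcal{S}_D$ using $E'$ in the role of the dual of $D$. Uniqueness of the semiprimitive in $V$ disjoint from $D \cup E'$ then forces $D_0$ to equal whichever of $\{E_0, E_p\}$ is adjacent to $D$ in $\mathcal{S}_E$, and consequently $D_1 = E$. The shell type of $\mathcal{S}_D$ is read off from the $(p, \ast)$-class of the associated semiprimitive disk $D_0'$ in $W$ on the torus $\partial V_D$; translating via the $(p, q)$-sequence expression $\partial E_1 = xy^p$ in $\pi_1(W) = \langle x, y\rangle$ and the induced change of generators for $\mathcal{S}_D$, this class matches the $(p, q)$-class of $\partial E_0'$ on $\partial V_E$. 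This word-level verification of the shell type is the principal technical step that I expect to be the main obstacle.

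For Case (2), the contrapositive of the intermediate claim gives $D \in \{E_{q'}, E_{p-q'}\}$. The shell $\mathcal{S}_D$ is some $(p, \bar q)$-shell with $\bar q \in \{q, q'\}$ modulo the identification $\bar q \leftrightarrow p-\bar q$ of the Remark following Lemma \ref{lem:sequence}. To rule out $\bar q = q$, suppose it holds; then applying the Case (1) analysis to $\mathcal{S}_D$ in the absence of a common dual would force $E$ to lie at one of $D_{q'}$ or $D_{p-q'}$, and the same $(p, q)$-sequence comparison used in Step (1) (but now relating $\partial E = x$ in the old generators to the $q'$-th term of a $(p, q)$-sequence in the new generators for $\mathcal{S}_D$) yields an arithmetic relation equivalent to $(q')^2 \equiv \pm 1 \pmod p$. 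Combined with $qq' \equiv \pm 1 \pmod p$, this contradicts $q \ne q'$ in the non-degenerate range (and the degenerate case $q = q'$ collapses both shell types to a single one, so the statement holds trivially). Hence $\bar q = q'$, so $\mathcal{S}_D$ is a $(p, q')$-shell; Lemma \ref{lem:shell} applied to $\mathcal{S}_D$ then places $E$ at the inner primitive positions $D_{(q')'} = D_q$ or $D_{p-q}$, as required.
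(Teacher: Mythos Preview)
Your overall structure is sound: using Lemma~\ref{lem:shell} to locate $D$ in $\mathcal S_E$ and Lemma~\ref{lem:common_dual} to characterize the common-dual case is exactly right, and the reduction to determining the shell type of $\mathcal S_D$ is the correct focus. However, the step you flag as the ``main obstacle'' is genuinely a gap, and your proposed word-level verification is neither carried out nor clearly workable as stated. In particular, in Case~(2) you assert that a $(p,q)$-sequence comparison would yield $(q')^2 \equiv \pm 1 \pmod p$, but you give no mechanism for this: the boundary words of $E$ and $D$ are expressed in \emph{different} generating pairs of $\pi_1(W)$ (one adapted to each shell), and you have not explained how to translate between them or why the composition of two such transitions forces a multiplicative relation on indices.

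The paper bypasses this entirely with a geometric argument you have missed. The shell type of $\mathcal S_D$ is governed by the $(p,\ast)$-class of $\partial D_0'$ on $\partial V_D$, which in turn depends only on the isotopy class of the solid torus $V_D = \cl(V - \Nbd(D))$ inside $L(p,q)$. For Case~(1), a common dual disk makes $V_D$ isotopic to $V_E$, so $\mathcal S_D$ inherits the $(p,q)$-type immediately. For Case~(2), the paper compares the core loops $l_E$ and $l_D$: after isotoping $l_D$ onto $\partial V_E$, one sees $[l_D] = q'[l_E]$ in $H_1(L(p,q))$, so $V_D$ is \emph{not} isotopic to $V_E$ (since $1 < q' \leq p/2$). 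By uniqueness of the genus-$1$ Heegaard surface, $V_E$ must then be isotopic to the exterior of $V_D$, forcing $\partial D_0'$ to be a $(p,q')$-curve. This homology argument is precisely the ``arithmetic relation'' you were reaching for, but it lives at the level of core curves and first homology, not at the level of word comparisons in the free group.
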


\begin{proof}
Let $E'$ ($D'$, respectively)
be the unique dual disks of $E$ ($D$, respectively) disjoint from $E_0$
($D_0$, respectively),
and let $E_0'$ ($D_0'$, respectively)
be the unique semi-primitive disk in $W$ disjoint from
$E$ ($D$, respectively).

\noindent (\ref{item:dual shells for common dual})
Suppose $\{ E, D \}$ has a common dual disk.
Then $V_D$ is isotopic to $V_E$ in $L(p,q)$.
This implies that $\partial D'_0$ is also a $(p,q)$-curve on $\partial V_D$.
Hence $\mathcal S_D$ is a $(p,q)$-shell as well.
It is clear that  $E$ is $D_1$ or $D_{p-1}$ and $D$ is $E_1$ or $E_{p-1}$ by Lemma \ref{lem:common_dual}.


\noindent (\ref{item:dual shells for non-common dual})
Suppose $\{ E, D \}$ has no common dual disk.
We note that and $1 < q < p-1$ in this case, and so $1 < q' \leq p/2$.
By Lemma \ref{lem:common_dual} and Lemma \ref{lem:shell},
$D$ is one of $E_{q'}$ and $E_{p-q'}$, and
$E$ is one of $D_{q}$, $D_{q'}$, $D_{p-q}$ and $D_{p-q'}$.

The solid torus $V_D$ and its exterior form
a genus-1 Heegaard splitting of $L(p,q)$.
We will show that $V_E$ is not isotopic to the solid torus $V_D$.
Let $E'$ be a dual disk of $E$ that has minimal intersection with
$D$.
Let $l_D$ and $l_E$ be the core loops of the solid tori
$V_D$ and $V_E$, respectively.
We may assume that $l_D$ and $l_E$ intersect
$E$ and $D$, respectively, once and transversely.
See Figure \ref{fig:core_loops} (a).

\begin{center}
\begin{overpic}[width=10cm, clip]{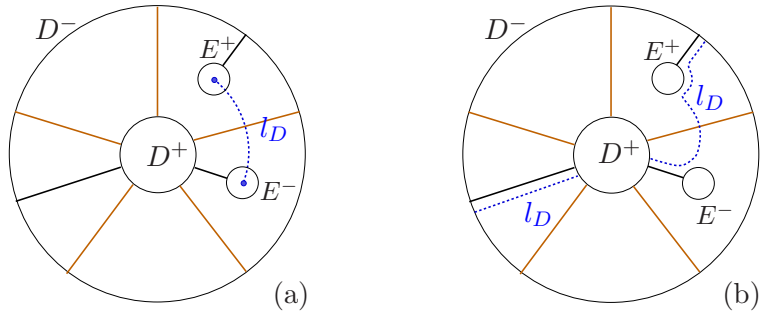}
  \linethickness{3pt}
  \put(100,0){(a)}
  \put(52,52){$D^+$}
  \put(10,100){$D^-$}
  \put(72,93){\small $E^+$}
  \put(95,39){\small $E^-$}
  \put(95,63){\color{blue} $l_D$}

  \put(270,0){(b)}
  \put(223,53){$D^+$}
  \put(180,100){$D^-$}
  \put(240, 92){\small $E^+$}
  \put(260,30){\small $E^-$}
  \put(195, 31){\color{blue} $l_D$}
  \put(260,75){\color{blue} $l_D$}
\end{overpic}
\captionof{figure}{The loop $l_D$ in the case of $L(5,2)$.}
\label{fig:core_loops}
\end{center}
We may move $l_D$ by isotopy in $V \cup \Nbd(E')$ so that
$l_D$ lies in $\partial V_E$.
See Figure \ref{fig:core_loops}  (b).
Now the two core circles $l_E$ and $l_D$ lie in the solid torus $V_E$ of which $D$ is a meridian disk. We observe that the circle $l_D$ intersects $D$ in $q'$ points transversely and minimally after an isotopy, while the circle $l_E$ intersects $D$ in a single point.
That is, we see that $[l_D] = q'[l_E]$ in $H_1(L(p,q))$ after giving
a suitable orientation on each of $l_D$ and $l_E$.
Since $1 < q' \leq p/2$, this implies that $V_D$ and $V_E$ are not isotopic in $L(p,q)$.
By the uniqueness of a genus-1 Heegaard surface of $L(p,q)$,
$V_E$ is actually isotopic to the solid torus which is the exterior of $V_D$.
This implies that $\partial D'_0$ is a $(p,q')$-curve on $\partial V_D$.
Thus $\mathcal S_D$ is a $(p,q')$-shell, and hence $E$ is $D_{q}$ or $D_{p-q}$.
\end{proof}

\begin{remark}
If we assume that $\mathcal S_E$ is a $(p,q')$-shell
instead of a $(p,q)$-shell in Lemmas \ref{lem:shell} and \ref{lem:shells_crossing},
the conclusion is obtained by
replacing $q'$ by $q$ and vice versa.
\end{remark}

\section{The structure of primitive disk complexes}
\label{sec:the_structure_of_primitive_disk_complexes}

\subsection{Contractibility theorem}
\label{subsec:connectivity_theorem}

The goal of this section is to find all lens spaces whose primitive disk complexes for the genus-$2$ splittings are connected and so contractible, Theorem \ref{thm:contractible}.
As in the previous sections, let $E$ be a primitive disk in $V$ with a dual disk $E'$.
The disk $E'$ forms a complete meridian system of $W$ together with the semiprimitive disk $E'_0$ in $W$ disjoint from $E \cup E'$.
Assigning the symbols $x$ and $y$ to the oriented circles $\partial E'$ and $\partial E'_0$ respectively, any oriented simple closed curve, especially the boundary circle of any essential disk in $V$, represents an element of the free group $\pi_1(W) = \langle x, y \rangle$ in terms of $x$ and $y$.
Let $D$ be a non-separating disk in $V$.
A simple closed curve $l$ on $\partial V$
intersecting $\partial D$ transversely in a
single point is called a {\it dual circle} of $D$.
We say that $l$ is a {\it common dual circle} of two disks if it is a dual circle of each of the disks.
We start with the following lemma.

\begin{lemma}
\label{lem:key lemma for non-connectivity}
Let $\{D_1, D_2\}$ be a complete meridian system of $V$.
Suppose that the non-separating disks $D_1$ and $D_2$ satisfy the following conditions:
\begin{enumerate}
\item
for each $i \in \{ 1,2 \}$, all intersections of $\partial D_i$
and $\partial E^\prime$ have the same sign;
\item
for each $i \in \{ 1 , 2 \}$, the circle $\partial D_i$ represents an element $w_i$ of the form
$(xy^q)^{m_i} x y^{n_i}$, where
$0 \leq m_1 < m_2$ and $n_1 \neq n_2$;
\item
any subarc of $\partial E^\prime$ with both endpoints on $\partial D_1$
intersects $\partial D_2$; and
\item
there exists a common dual circle $l$ of $D_1$ and $D_2$ on $\partial V$ disjoint from
$\partial E^\prime$.
\end{enumerate}
Then there exists a non-separating disk $D_*$ in $V$
disjoint from $D_1 \cup D_2$ satisfying the following:
\begin{enumerate}
\item
all intersections of $\partial D_*$
and $\partial E^\prime$ have the same sign;
\item
$\partial D_*$ represents an element of the form
$(xy^q)^{m_1 + m_2 + 1} x y^{n_1 + n_2 - q}$;
\item
for each $i \in \{1 , 2 \}$, any subarc of
$\partial E^\prime$ with both endpoints on $\partial D_i$
intersects $\partial D_*$; and
\item
for each $i \in \{1 , 2 \}$,
there exists a common dual circle of $D_i$ and $D_*$ on $\partial V$ disjoint from
$\partial E^\prime$.
\end{enumerate}
\end{lemma}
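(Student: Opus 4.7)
The plan is to construct $D_*$ as the result of a disk slide (band sum on $\Sigma$) of $D_1$ over $D_2$ along an arc $\alpha$ joining $\partial D_1$ to $\partial D_2$. Since $\{D_1, D_2\}$ is a complete meridian system of $V$, any such slide produces a non-separating disk $D_*$ that is disjoint from $D_1 \cup D_2$ and forms a complete meridian system of $V$ together with each of $D_1$ and $D_2$. The substance of the lemma is therefore the correct choice of $\alpha$ together with the verification of the four numbered properties.

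The subarc $l'$ of $l$ from $l \cap \partial D_1$ to $l \cap \partial D_2$ is the obvious first candidate and is disjoint from $\partial E'$ by hypothesis (4). However, the band sum along $l'$ reads off as $w_1 w_2$, which is not cyclically equivalent to the target word in general. The correction is to replace $l'$ by an arc $\alpha$ in its relative isotopy class in $\Sigma \setminus (\partial D_1 \cup \partial D_2 \cup \partial E')$, modified only in that $\alpha$ is allowed to cross $\partial E'_0$ algebraically $k := n_1 - q$ times, with the sign dictated by the orientation conventions. The band sum along this $\alpha$ then reads
\begin{equation*}
w_1 \cdot y^{-k} \cdot w_2 \cdot y^{k} = (xy^q)^{m_1} xy^{q} (xy^q)^{m_2} xy^{n_1 + n_2 - q} = (xy^q)^{m_1 + m_2 + 1} xy^{n_1 + n_2 - q},
\end{equation*}
with the middle block $xy^{n_1 - k} = xy^q$ absorbed into the adjacent $(xy^q)$-runs. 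The existence of such an $\alpha$, with endpoints in the correct rectangles of the four-holed sphere $\Sigma \setminus (\partial E' \cup \partial E'_0)$, is where hypothesis (3) is used: the separation of consecutive points of $\partial D_1 \cap \partial E'$ by $\partial D_2$ on $\partial E'$ uniquely identifies the rectangles in which the endpoints of $\alpha$ must lie.

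Conclusions (1) and (2) then follow directly. For (1), $\alpha$ is disjoint from $\partial E'$, so $\partial D_* \cap \partial E'$ is the disjoint union of $\partial D_1 \cap \partial E'$ and $\partial D_2 \cap \partial E'$ (as subsets of a collar of $\partial D_1 \cup \partial D_2$), with signs preserved from hypotheses (1). For (2), the displayed computation reads off exactly the target word. For (3), observe that away from a small neighborhood of the band, $\partial D_*$ is isotopic to $\partial D_1 \cup \partial D_2$: in a subarc of $\partial E'$ whose endpoints lie on $\partial D_1$, hypothesis (3) provides an interior crossing with $\partial D_2$, which is also a crossing with $\partial D_*$; the $i = 2$ case is handled by the dual separation statement, which is forced by the $(xy^q)$-pattern visible in the target word of $\partial D_*$.

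Finally, for conclusion (4), the complementary subarc $l'' = l \setminus l'$ is used. For each $i \in \{1,2\}$, closing up $l''$ by a short arc that crosses the band exactly once yields a simple closed curve meeting $\partial D_i$ in a single point (the preserved intersection $l \cap \partial D_i$) and $\partial D_*$ in a single point (where the closure arc crosses a long side of the band rectangle). This curve avoids $\partial E'$ because $l''$ does and the closure arc can be chosen within a neighborhood of $\alpha$, which itself is disjoint from $\partial E'$. The main obstacle I anticipate is precisely the construction and verification of $\alpha$: placing it in $\Sigma \setminus (\partial E' \cup \partial E'_0 \cup \partial D_1 \cup \partial D_2)$ so that its algebraic crossing number with $\partial E'_0$ is exactly $n_1 - q$ and its endpoints land in the rectangles producing the word of conclusion (2); everything else is formal once $\alpha$ is in place.
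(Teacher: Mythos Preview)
Your band-sum construction is the same object the paper builds --- any nonseparating disk in $V$ disjoint from $D_1 \cup D_2$ arises this way --- but you have correctly identified, and not resolved, the essential difficulty. The paper does not try to manufacture the band arc $\alpha$ abstractly on $\Sigma$; instead it cuts $\Sigma$ along $\partial D_1 \cup \partial D_2$ to obtain the $4$-holed sphere $\Sigma_*$ and analyzes the arcs $\partial E' \cap \Sigma_*$ and $l \cap \Sigma_*$ there. Hypotheses (1), (3) and (4) pin this arc pattern down to one of two explicit pictures, and $D_*$ is taken to be a specific ``horizontal'' curve in $\Sigma_*$, visibly disjoint from $\partial E'$. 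Conditions (1) and (3) for $D_*$ are then immediate from the picture; the dual circles for condition (4) are two explicit arcs $l_1^*, l_2^*$ in $\Sigma_*$; and condition (2) is a count of how many arcs of $\partial E'_0 \cap \Sigma_*$ cross the one exceptional segment $\nu_*$ of $\partial D_*$ (all the others visibly read $y^q$).

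Your algebraic route --- let $\alpha$ cross $\partial E'_0$ algebraically $n_1 - q$ times and compute $w_1 y^{-k} w_2 y^k$ --- is more fragile than it looks. The formal word identity presupposes that the band is attached at the cyclic ``start'' of each $w_i$ and that the prescribed crossings with $\partial E'_0$ can be realized by an \emph{embedded} arc disjoint from $\partial E' \cup \partial D_1 \cup \partial D_2$; neither is argued, and the second is exactly where the geometry of hypothesis (3) must enter. In fact, once one passes to $\Sigma_*$ one sees that no extra crossings with $\partial E'_0$ are needed: the correct band attaches inside the $\nu_i$-segments, and the existence of that placement is precisely what the $\Sigma_*$ picture makes visible. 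Your verifications of conclusion (3) for $i=2$ (``forced by the $(xy^q)$-pattern'') and of conclusion (4) (a single closed-up $l''$ serving both $i$) are likewise heuristic; the paper instead reads both off the $\Sigma_*$ picture, with two distinct dual circles $l_1^*$, $l_2^*$. The fix in every case is the same: pass to $\Sigma_*$ and draw the configuration.
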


\begin{proof}
For $i \in \{1,2\}$, let $\nu_i$
be a connected subarc of
$\partial D_i$
that determines the subword
$y^{n_i}$ of $w_i$.
Cutting off $\partial V$ by $\partial D_1 \cup \partial D_2$,
we obtain the 4-holed sphere $\Sigma_\ast$.
We denote by $d_i^\pm$ the boundary circles
of $\Sigma_\ast$ coming from $\partial D_i$, and
by $\nu_i^\pm$ the subarc of $d_i^\pm$
coming from $\nu_i$.
By the assumption (2),
we may assume without loss of generality that
each oriented arc component
$\partial E^\prime \cap \Sigma_\ast$
directs from $d_{i_1}^+$ to $d_{i_2}^-$ for certain
$i_1, i_2 \in \{ 1 , 2 \}$.
By the assumptions (3) and (4),
the 4-holed sphere $\Sigma_\ast$ and the arcs $\Sigma_\ast \cap \partial E^\prime$ and
$\Sigma_\ast \cap l = l' \sqcup l''$
on $\Sigma_\ast$ can be drawn as in one of
Figure \ref{fig:L_or_R-replacement} (a) and (b).
\begin{center}
\begin{overpic}[width=12cm, clip]{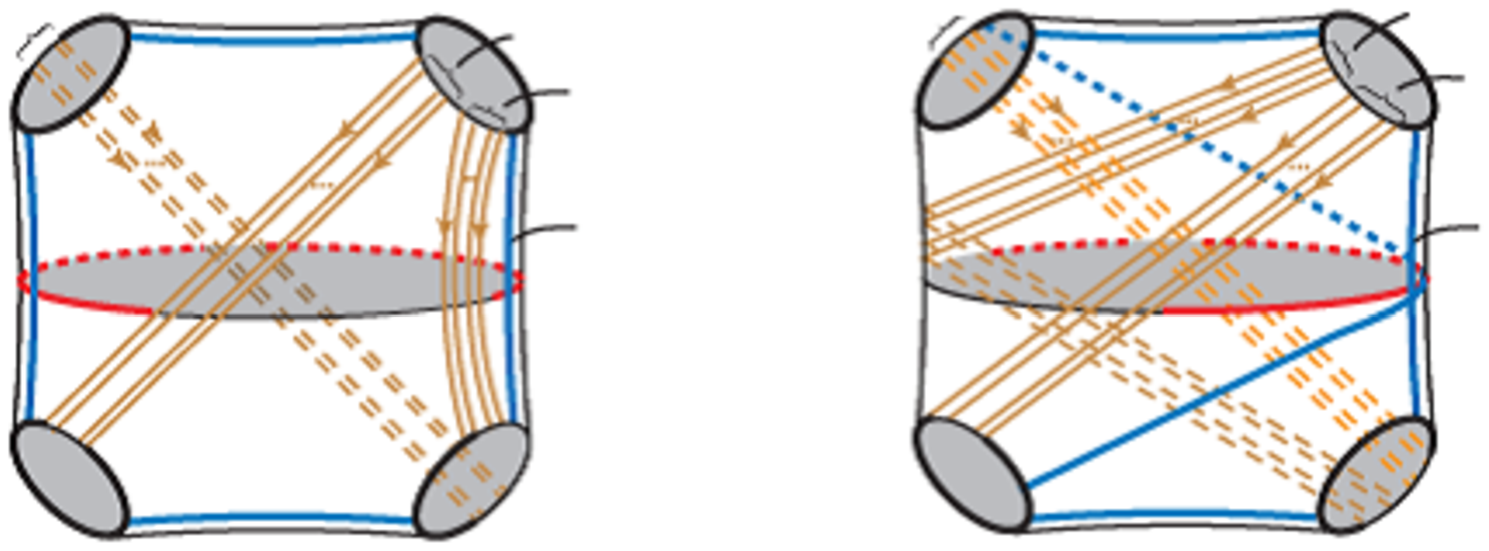}
  \linethickness{3pt}
  \put(55,0){(a)}
  \put(56,15){$\Sigma_*$}
  \put(58,128){\color{blue} $l'$}
  \put(58,36){\color{blue} $l''$}
  \put(15,100){\color{blue} $l_1^*$}
  \put(133,94){\color{blue} $l_2^*$}
  \put(120,137){$m_1+1$}
  \put(131,124){$m_2 - m_1$}
  \put(-22,140){$m_1+1$}
  \put(-1,83){\color{red} $\nu_*$}
  \put(122,83){\color{red} $\nu_*'$}
  \put(85,82){$D_*$}
  \put(18,147){$\nu_1^+$}
  \put(10,23){$\nu_1^-$}
  \put(105,145){$\nu_2^+$}
  \put(100,22){$\nu_2^-$}
  \put(16,129){\small $d_1^+$}
  \put(16,39){\small $d_1^-$}
  \put(106,126){\small $d_2^+$}
  \put(103,40){\small $d_2^-$}

  \put(255,0){(b)}
  \put(256,15){$\Sigma_*$}
  \put(258,129){\color{blue} $l$}
  \put(58,36){\color{blue} $l'$}
  \put(258,37){\color{blue} $l'$}
  \put(238,52){\color{blue} $l_1^*$}
  \put(330,94){\color{blue} $l_2^*$}
  \put(314,141){$m_2 - m_1$}
  \put(326,127){$m_1 +1$}
  \put(176,140){$m_1+1$}
  \put(235,97){\color{red} $\nu_*'$}
  \put(319,84){\color{red} $\nu_*$}
  \put(285,83){$D_*$}
  \put(216,148){$\nu_1^+$}
  \put(207,25){$\nu_1^-$}
  \put(298,148){$\nu_2^+$}
  \put(299,23){$\nu_2^-$}
  \put(213,129){\small $d_1^+$}
  \put(214,40){\small $d_1^-$}
  \put(300,129){\small $d_2^+$}
  \put(300,40){\small $d_2^-$}
\end{overpic}
\captionof{figure}{The 4-holed sphere $\Sigma_*$. There are 2 patterns of $\partial E' \cap \Sigma_*$.}
\label{fig:L_or_R-replacement}
\end{center}
In the figure the arcs $\nu_i^{\pm}$ in $d^{\pm}_i$ are
drawn in bold.

Let $D_*$ be the horizontal disk shown in each of
Figure \ref{fig:L_or_R-replacement} (a) and (b).
It is clear that
$D_*$ satisfies conditions (1) and (3).
For each $i \in \{ 1 , 2 \}$
the simple closed curve on $\partial V$
obtained from the arc
$l_i^*$ depicted in the figure
by gluing back along $d_1^{\pm}$ and $d_2^\pm$
is a common dual circle of $D_i$ and $D_*$ disjoint from
$E^\prime$, hence the condition (4) holds.
Moreover, it is easily seen that
all but one component $\nu_*$ of
$\partial D_*$ cut off by $\partial E^\prime$,
shown in Figure \ref{fig:L_or_R-replacement},
determine a word of the form $y^q$.
Hence it suffices to show that
the arc $\nu_*$ determines a word of the
form $y^{n_1 + n_2 - q}$.
From the arcs $\nu_1^+ \cup \nu_2^+$, algebraically
$n_1 + n_2$ arcs of $\partial E_0^\prime \cap \Sigma_\ast$
come down and all of them pass trough
$\nu_* \cup \nu_*^\prime$ from above,
where the arc $\nu_*^\prime$ is shown in
Figure \ref{fig:L_or_R-replacement}.
Since the arc $\nu_*^\prime$ determines a word of the
form $y^q$,
the arc $\nu_*$ determines a word of the
form $y^{n_1 + n_2 - q}$.
\end{proof}

Let $(D_1, D_2)$ be an ordered pair of
disjoint non-separating disks in $V$
such that the (unordered) pair $\{D_1, D_2 \}$
satisfies the conditions of Lemma
\ref{lem:key lemma for non-connectivity}.
Then there exists a disk $D_*$
as in the lemma
and we again obtain new ordered pairs
$(D_1, D_*)$ and $(D_*, D_2)$ such that
both $\{ D_1, D_* \}$ and $\{ D_*, D_2 \}$
satisty the conditions of the lemma.
We call these new pairs $(D_1 , D_*)$ and
$(D_* , D_2)$ the pairs obtained by
{\it R-replacement} and {\it L-replacement}, respectively,
of $(D_1, D_2)$.

\vspace{1em}

\begin{theorem}
For a lens space $L(p, q)$ with $1 \leq q \leq p/2$,
the primitive disk complex $\mathcal P(V)$ for
a genus-$2$ Heegaard splitting $(V, W; \Sigma)$ of $L(p, q)$ is contractible if and only if $p \equiv \pm 1 \pmod{q}$.
\label{thm:contractible}
\end{theorem}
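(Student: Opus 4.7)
First I note that by the general observation made in Section \ref{sec:primitive_disk_complexes}, every connected component of any full subcomplex of $\mathcal D(V)$ is contractible. Hence $\mathcal P(V)$ is contractible if and only if it is connected, and the theorem reduces to characterizing the connectedness of $\mathcal P(V)$. The ``if'' direction is then immediate: assuming $p \equiv \pm 1 \pmod q$, Theorem \ref{thm:primitive} supplies exactly the surgery hypothesis needed in Theorem \ref{thm:surgery} for the full subcomplex $\mathcal K = \mathcal P(V)$, so $\mathcal P(V)$ is contractible.

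For the ``only if'' direction I would argue the contrapositive: assume $p \not\equiv \pm 1 \pmod q$ and exhibit primitive disks lying in distinct components of $\mathcal P(V)$. The hypothesis forces $q \geq 2$ and $q' \geq 3$ (since $q' \in \{1,2\}$ would give $p \equiv \pm 1 \pmod q$). Fix a primitive disk $E \subset V$ with dual $E'$, the associated semiprimitive disk $E_0' \subset W$ disjoint from $E \cup E'$, and the $(p,q)$-shell $\mathcal S_E = \{E_0, E_1, \ldots, E_p\}$. The plan is to show that the primitive disks $E$ and $E_{q'}$ cannot be joined by a path in $\mathcal P(V)$. Suppose for contradiction such a path $E = F_0, F_1, \ldots, F_k = E_{q'}$ exists, so consecutive $F_i, F_{i+1}$ form a primitive pair in $V$. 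By Lemma \ref{lem:shells_crossing}, each such edge either preserves the isotopy class of the solid torus $V_{F_i}$ in the genus-one Heegaard splitting of $L(p,q)$ (when the pair has a common dual disk) or swaps it (when it does not). Since $E_{q'} \notin \{E_1, E_{p-1}\}$ (as $3 \leq q' \leq p/2 < p-1$), Lemma \ref{lem:shells_crossing}(1) shows that $\{E, E_{q'}\}$ has no common dual, so the endpoints of the path lie in opposite classes and at least one edge $\{F_i, F_{i+1}\}$ along the path must realize the swap.

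I would then apply Lemma \ref{lem:key lemma for non-connectivity} with $(D_1, D_2) = (F_i, F_{i+1})$ at such a swap edge. Primitivity of $F_i$ and $F_{i+1}$ together with Lemma \ref{lem:key} forbids both $xy$ and $xy^{-1}$ in their boundary words relative to the basis $\{x, y\} = \{[\partial E'], [\partial E_0']\}$ of $\pi_1(W)$; this yields the same-sign intersection condition with $\partial E'$ and places the boundary words in the standard form $(xy^q)^{m_i} x y^{n_i}$ with $m_1 < m_2$ and $n_1 \neq n_2$. The shell structure around the edge supplies a common dual circle disjoint from $\partial E'$ and the required control on subarcs of $\partial E'$. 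Iterating $L$- and $R$-replacements produces an infinite family of pairwise disjoint non-separating disks $D_\ast$ with boundary words in the normal form $(xy^q)^m x y^n$ whose indices $(m,n)$ follow the recursion $(m_1 + m_2 + 1, n_1 + n_2 - q)$ of the lemma. Lemma \ref{lem:four_primitives} restricts primitivity among disks in any shell to the indices $j \in \{1, q', p-q', p-1\}$, and one checks that the arithmetic of the recursion, combined with the swap character of the edge $\{F_i, F_{i+1}\}$, forces a primitive disk filling this edge to occur at an index outside $\{1, q', p-q', p-1\}$, contradicting Lemma \ref{lem:four_primitives}.

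The main obstacle is this last arithmetic step: showing rigorously that the L/R-recursion applied to a swap edge must eventually demand a primitive $D_\ast$ at a forbidden index. This requires combining the Osborne--Zieschang normal form of primitive elements (Lemma \ref{lem:primitive}) with the sign and word-form constraints of Lemma \ref{lem:key} and the Euclidean structure of the $(m,n)$-recursion. A secondary obstacle is the translation from the topological path in $\mathcal P(V)$ to the algebraic setup required by Lemma \ref{lem:key lemma for non-connectivity}: for each consecutive pair $(F_i, F_{i+1})$ one must select dual disks so that both boundary words simultaneously realize the standard form $(xy^q)^{m_i} x y^{n_i}$, and then track the base changes induced on passing from one edge of the path to the next.
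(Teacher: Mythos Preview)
Your ``if'' direction is correct and matches the paper's argument exactly.

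The ``only if'' direction, however, has a genuine strategic gap. You attempt to fix a hypothetical path $E=F_0,\ldots,F_k=E_{q'}$ in $\mathcal P(V)$, locate a ``swap'' edge $\{F_i,F_{i+1}\}$ along it, and then feed that edge into Lemma~\ref{lem:key lemma for non-connectivity} to derive a contradiction. The difficulty is that the hypotheses of Lemma~\ref{lem:key lemma for non-connectivity} are stated relative to the \emph{fixed} meridian system $\{E',E_0'\}$ of $W$: conditions (1)--(4) demand that $\partial D_1,\partial D_2$ have specific word forms $(xy^q)^{m_i}xy^{n_i}$ in that basis, the same intersection signs with $\partial E'$, and a common dual circle disjoint from $\partial E'$. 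An arbitrary swap edge $\{F_i,F_{i+1}\}$ on a path that may wander far from $E$ has no reason to satisfy these; your appeal to Lemma~\ref{lem:key} only rules out simultaneous $xy$ and $xy^{-1}$ in \emph{some} cyclically reduced word, not the specific normal form (2). You acknowledge this as a ``secondary obstacle'', but it is not secondary: without it the lemma simply does not apply. Moreover, even granting the hypotheses, the lemma \emph{produces} further disks $D_*$; it does not by itself yield a contradiction, and your final step (forcing a primitive $D_*$ at a forbidden shell index via Lemma~\ref{lem:four_primitives}) is ill-posed because the $D_*$'s are not members of any shell.

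The paper's argument runs in the opposite direction and avoids all of this. Rather than assuming a path and seeking a contradiction, it \emph{constructs} a second primitive disk separated from $E$ by a non-primitive barrier. Writing $p=qm+r$ with $2\le r\le q-2$, one checks directly that $E_m$ and $E_{m+1}$ in the shell $\mathcal S_E$ are \emph{not} primitive. Starting from the pair $(D_0,D_{-1})=(E_m,E)$, which \emph{does} satisfy the hypotheses of Lemma~\ref{lem:key lemma for non-connectivity} by construction, one iterates L/R-replacements in a pattern dictated by the continued fraction of $s/(t+1)$, where $sr-(t+1)q=1$. Tracking the exponents via the Farey addition rule shows that the terminal disk $D_{p_0+\cdots+p_k}$ has boundary word of the form $(xy^q)^d xy^{q+1}$, hence is primitive. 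The punchline then uses only the fact that the dual of $\mathcal D(V)$ is a tree: the edge $\{D_0,D_1\}=\{E_m,E_{m+1}\}$ separates $E=D_{-1}$ from $D_{p_0+\cdots+p_k}$ in $\mathcal D(V)$, and since neither $D_0$ nor $D_1$ is primitive, these two primitive disks lie in different components of $\mathcal P(V)$.
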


\begin{proof}
The ``if" part follows directly from Theorem \ref{thm:primitive} and Theorem \ref{thm:surgery}.
For the ``only if" part, we will show that $\mathcal P(V)$ is not connected
when $p \not\equiv \pm 1 \pmod{q}$.
Suppose that $p \not\equiv \pm 1 \pmod{q}$.
Let $m$ and $r$ be integers such that $p=qm + r$ with $2 \leq r \leq q-2$.
Then there exist a natural number $s$ and a non-negative integer $t$ with
$s r - (t+1) q = 1$.
Consider the unique continued fraction expansion
\[
s / (t+1) =
p_0 +  \frac{1}{p_1 + \frac{1}{p_2 + \frac{1}{\ddots + \frac{1}{p_k}}}} =: [ p_0 ; p_1 , p_2 , \ldots , p_k ] \]
where $p_i \geq 1$ for $i \in \{0, 1, \cdots, k-1\}$ and $p_k \geq 2$.

The circle $\partial E_0'$ is a $(p, \bar q)$-curve on
the boundary of the solid torus $V_E$
for some $\bar q \in \{q, q', p-q', p-q\}$,
where $q'$ satisfies $1 \leq q' \leq p/2$ and $qq' \equiv \pm 1 \pmod p$.
We will consider only the case of $\bar q = q$, that is,
$\partial E_0'$ is a $(p, q)$-curve on
the boundary of $V_E$.
The following argument can be easily adapted for the other cases.

Consider any $(p, q)$-shell $\mathcal S_E = \{E_0, E_1, \cdots, E_p\}$ in $\mathcal D(V)$ centered at $E$.
Note that the disks $E_m$ and $E_{m+1}$ in the sequence are not primitive since $\partial E_m$ and $\partial E_{m+1}$ represent elements of the form $(xy^q)^{m-1}xy^{q + r}$ and $(xy^q)^m xy^r$ respectively.
Set $D_0 = E_m$ and $D_{-1} = E$.
Since $D_0$ and $D_{-1}$ satisfy the conditions of
Lemma \ref{lem:key lemma for non-connectivity},
we obtain a new ordered pair $(D_0 , D_1)$ by an
R-replacement of $(D_0, D_{-1})$.
The disk $D_1$ is not primitive since $\partial D_1$ represents an element of the form
$(xy^q)^m xy^r$.
(Actually, $D_1$ can be chosen to be the disk $E_{m+1}$ in the sequence.)
Applying R-replacements $(p_0 - 1)$ times more, starting at
$(D_0 , D_1)$, as
\[ (D_0, D_1) \to (D_0, D_2) \to \cdots  \to (D_0, D_{p_0}) , \]
we obtain the pair $(D_0 , D_{p_0})$.
Next we apply L-replacements $p_1$ times starting at $(D_0 , D_{p_0})$ as
\[ (D_0, D_{p_0}) \to (D_{p_0 + 1}, D_{p_0}) \to (D_{p_0 + 2}, D_{p_0})
\to \cdots  \to (D_{p_0 + p_1}, D_{p_0}) \]
to obtain the pair $(D_{p_0 + p_1}, D_{p_0})$.
Continuing this process, we finally obtain either the pair
$(D_{p_0 + \cdots + p_k}, D_{p_0 + \cdots + p_{k-1}})$
if $k$ is odd, or
$(D_{p_0 + \cdots + p_{k-1}}, D_{p_0 + \cdots + p_k})$
if $k$ is even, of pairwise disjoint non-separating disks.
See Figure \ref{fig:sequence_of_triangles}.

\begin{center}
\begin{overpic}[width=12.5cm, clip]{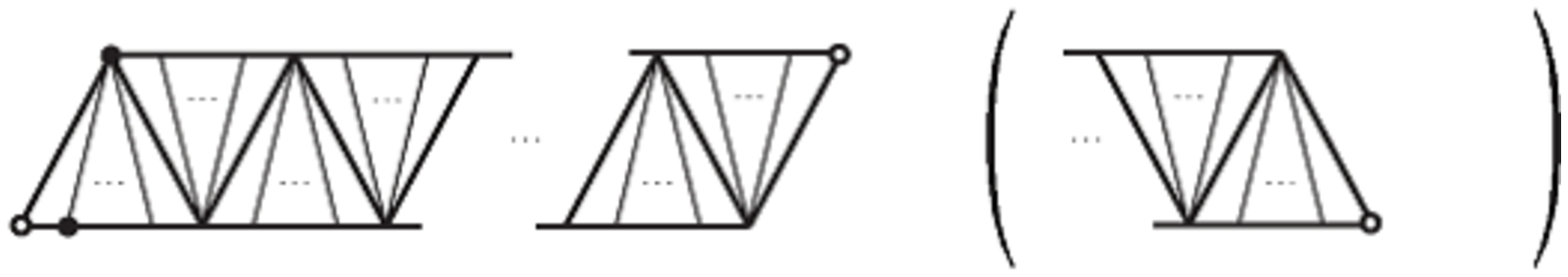}
  \linethickness{3pt}
  \put(-15,5){\tiny $E = D_{-1}$}
  \put(20,5){\tiny $D_1$}
  \put(50,5){\tiny $D_{p_0}$}
  \put(90,5){\tiny $D_{p_0+p_1+p_2}$}
  \put(172,5){\tiny $D_{p_0+ \cdots p_{k-1}}$}
  \put(7,56){\tiny $E_m=D_0$}
  \put(72,56){\tiny $D_{p_1 + p_2}$}
  \put(192,56){\tiny $D_{p_0+ \cdots +p_k}$}
  \put(233,30){or}
  \put(290,56){\tiny $D_{p_0+ \cdots p_{k-1}}$}
  \put(309,5){\tiny $D_{p_0+ \cdots +p_k}$}
\end{overpic}
\captionof{figure}{The portion of $\mathcal{D}(V)$ obtained by
L and R-replacements from $(D_0, D_{-1})$ following the process that corresponds to the continued fraction $[ p_0 ; p_1 , p_2 , \ldots , p_k ]$.
The vertices $D_{-1}$ and $D_{p_0+ \cdots +p_k}$ are primitive whereas $D_0$ and $D_1$ are not primitive.}
\label{fig:sequence_of_triangles}
\end{center}

We assign $D_0$ and $D_{-1}$ the rational numbers
$1/0$ and $0/1$, respectively.
We inductively assign rational numbers to the disks appearing in the above process as follows.
Let $(D_* , D_{**})$ be an ordered pair of non-separating disks
appearing in the process.
Assume that we have already assigned $D_*$ and $D_{**}$
rational numbers $a_1 / b_1$ and $a_2 / b_2$, respectively.
Then we assign the next disk obtained by
L or R-replacement of $(D_*, D_{**})$
the rational number $(a_1 + a_2) / (b_1 + b_2)$.

\smallskip

\noindent {\it Claim.}
If a disk $D_j$, for $-1 \leq j \leq p_0 + \cdots + p_k$,
appearing in the above process is assigned
a rational number $a / b$, then $\partial D_j$
represents an element of the form
$(xy^q)^{d} xy^{a r - (b-1)q}$ for some non-negative integer $d$.

\smallskip

\noindent {\it Proof of Claim.}
If $j = -1$, then $a / b = 0 / 1$ and
$\partial D_{-1} = \partial E$ represents
$x$ and we have $a r - (b - 1)q = 0$.
If $j = 0$, then $a / b = 1 / 0$ and
$\partial D_{0} = \partial E_m$ represents an element of the form
$(xy^q)^{m - 1}xy^{q+r}$ and we have $a r - (b - 1)q = q + r$.

Assume that the claim is true for any $D_i$ with
$i$ less than $j$ and that $D_j$ is obtained from $(D_* , D_{**})$.
If $D_*$ and $D_{**}$
are assigned rational numbers $a_1 / b_1$ and $a_2 / b_2$,
respectively, then $D_j$ is assigned $(a_1 + a_2) / (b_1 + b_2)$ by definition.
By the assumption, $\partial D_*$ and $\partial D_{**}$
determine elements of the forms
$(xy^q)^{d_1} xy^{a_1 r - (b_1-1)q}$ and
$(xy^q)^{d_2} xy^{a_2 r - (b_2-1)q}$ respectively,
for some non-negative integers $d_1$ and $d_2$.
By Lemma \ref{lem:key lemma for non-connectivity},
the circle $\partial D_j$ determines an element of the form
$(xy^q)^{d_1 + d_2 + 1} xy^{(a_1+a_2) r - (b_1+b_2-1)q}$, and hence the induction completes the proof.

\smallskip

Due to well-known properties of the Farey graph, see e.g. Hatcher-Thurston \cite{HT85},
$D_{p_0 + \cdots + p_k}$ is assigned $s / (t+1)$.
Therefore, by the claim,
$\partial D_{p_0 + \cdots + p_k}$
determines an element of the form
$(xy^q)^d xy^{sr - tq}$, hence
$(xy^q)^d xy^{q+1}$.
This implies that $D_{p_0 + \cdots + p_k}$ is primitive.

Now, we focus on the four disks
$D_{-1}$, $D_0$, $D_1$ and $D_{p_0 + \cdots + p_k}$.
Since the dual complex of the disk complex
$\mathcal{D}(V)$ is a tree, and
the disks $D_0$ and $D_1$ are not primitive,
the primitive disks $D_{-1}$ and $D_{p_0 + \cdots + p_k}$
belong to different components of $\mathcal{P}(V)$.
This implies that $\mathcal{P}(V)$ is not connected.
\end{proof}

\subsection{The structures of primitive disk complexes}
\label{subsec:connected_primitive_disk_complexes}
In this section, we describe the combinatorial structure of the primitive disk complex for the genus-$2$ Heegaard splitting of each lens space.
We say simply that a primitive pair has a common dual disk if the two disks of the pair have a common dual disk.

\begin{theorem}
Given a lens space $L(p, q)$, $1 \leq q \leq p/2$, with a genus-$2$ Heegaard splitting $(V, W; \Sigma)$, each primitive pair in $V$ has a common dual disk if and only if $q = 1$.
In this case, if $p \geq 3$, the pair has a unique common dual disk, and if $p = 2$, the pair has  exactly two disjoint common dual disks, which form a primitive pair in $W$.
\label{thm:common_dual}
\end{theorem}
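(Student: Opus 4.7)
The plan is to combine the $(p,q)$-shell machinery of Section~\ref{subsec:the_link_of_a_primitive_disk} with Lemma~\ref{lem:common_dual}, which characterizes common dual disks via semiprimitive disks in $V$ disjoint from both members of the pair. For the forward direction, suppose $q \geq 2$; then the associated $q'$ satisfies $1 < q' \leq p/2$, and in particular $q' < p-1$. Fix any primitive $E$ and any $(p,q)$-shell $\mathcal{S}_E=\{E_0,\ldots,E_p\}$ centered at $E$. By Lemma~\ref{lem:sequence}(2) the disk $E_{q'}$ is primitive and distinct from both $E_1$ and $E_{p-1}$, so $\{E,E_{q'}\}$ is a primitive pair, and the contrapositive of Lemma~\ref{lem:shells_crossing}(1) shows that it admits no common dual disk.

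For the converse, suppose $q=1$ (so $q'=1$) and let $\{E,D\}$ be any primitive pair. By Lemma~\ref{lem:shell}, $D$ lies in a unique shell $\mathcal{S}_E=\{E_0,\ldots,E_p\}$ centered at $E$ and equals one of $E_1, E_{q'}, E_{p-q'}, E_{p-1}$; since $q'=1$, this forces $D=E_1$ or $D=E_{p-1}$. Using Remark~\ref{remark:intersections_in_a_shell} we have $|E_0\cap E_1|=0=|E_p\cap E_{p-1}|$, so one of the two semiprimitive endpoints of the shell is disjoint from both $E$ and $D$, and Lemma~\ref{lem:common_dual} produces a common dual disk.

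For the multiplicity count, take $D=E_1$ without loss of generality. Any semiprimitive disk in $V$ disjoint from $\{E,E_1\}$ and isotopic to neither is, by Lemma~\ref{lem:surgery_on_primitive}(1) with $j=1$, isotopic to $E_0$ or to $E_2$. A direct calculation with the $(p,1)$-sequence shows that $\partial E_2$ represents an element of word-length $p+2$ in the basis of $\pi_1(W)$ arising from the shell; for this to be a $p$-th power of a primitive element one needs $p$ to divide $2$, so when $p\geq 3$ the disk $E_2$ fails to be semiprimitive and only $E_0$ qualifies. When $p=2$, by contrast, $E_2=E_p$ is itself semiprimitive, giving a second qualifying disk.

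The principal remaining step, and the main obstacle, is to upgrade this count of semiprimitive disks in $V$ to a count of common dual disks in $W$. Given a semiprimitive $E_0$ in $V$ disjoint from $\{E,D\}$, I plan to identify the common dual disk $E'$ essentially as the meridian of the solid torus obtained by cutting $W$ along the unique semiprimitive disk $E'_0$ in $W$ disjoint from $E\cup E'$, so that uniqueness of $E'$ reduces to the uniqueness of a meridian disk of a solid torus. For $p=2$, the two common dual disks arising from $E_0$ and $E_p$ must then be verified to be disjoint and to form a primitive pair in $W$, which I expect to follow by applying the converse implication with the roles of $V$ and $W$ exchanged.
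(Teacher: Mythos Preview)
Your argument for the equivalence is sound and close in spirit to the paper's: the converse direction ($q=1$ implies every pair has a common dual) is identical, while for the forward direction you invoke Lemma~\ref{lem:shells_crossing} instead of the paper's combination of Lemma~\ref{lem:common_dual} and Lemma~\ref{lem:surgery_on_primitive}, which is a perfectly valid shortcut.

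The genuine gap is in the multiplicity count. You correctly determine, via Lemma~\ref{lem:surgery_on_primitive}(1), that there are exactly one or two semiprimitive disks in $V$ disjoint from $\{E,E_1\}$ according as $p\geq 3$ or $p=2$. But converting this into a count of common dual disks in $W$ requires a \emph{bijection} between those semiprimitive disks and the common dual disks, and your proposed route to that bijection does not work. The disk $E'_0$ you invoke---the unique semiprimitive disk in $W$ disjoint from $E\cup E'$---in fact depends only on $E$ (it is the meridian of the solid torus $W\cup\Nbd(E)$), not on the particular dual disk $E'$. Consequently \emph{every} dual disk of $E$ is a meridian of the solid torus $\cl(W\setminus\Nbd(E'_0))$, and there are infinitely many such dual disks (they are distinguished in $W$ by how their boundaries sit in $\Sigma$ relative to $\partial E'_0$). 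So ``uniqueness of the meridian of a solid torus'' cannot isolate a single common dual disk, and the argument is circular as you suspected. For $p=2$ the same issue prevents you from concluding that the two semiprimitive disks yield two \emph{distinct} common dual disks, let alone that they are disjoint.

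The paper avoids this difficulty by working directly on the $W$ side. Having fixed one common dual disk $E'$ and the corresponding semiprimitive disk $E'_0$ in $W$, one cuts $\partial W$ along $\partial E'\cup\partial E'_0$ to obtain a $4$-holed sphere $\Sigma'$. The circles $\partial E$ and $\partial D=\partial E_1$ are then drawn explicitly in $\Sigma'$, and one observes that any \emph{other} common dual disk must have boundary disjoint from $\partial E'\cup\partial E'_0$ (otherwise it would meet $\partial E$ or $\partial D$ more than once), hence must lie entirely in $\Sigma'$. A direct inspection of $\Sigma'$ then shows that such a circle exists precisely when $p=2$, and is unique in that case; this also makes the disjointness from $E'$ and the primitivity in $W$ immediate.
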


\begin{proof}
Suppose that $q = 1$, and let $\{D, E\}$ be any primitive pair of $V$.
By Lemma \ref{lem:shell}, there is a shell $\mathcal S_E = \{E_0, E_1, \cdots, E_p\}$ centered at $E$, in which $D$ is $E_1$ (here we have $q' = q = 1$).
By Lemma \ref{lem:common_dual}, $D$ and $E$ have a common dual disk.

\begin{center}
\begin{overpic}[width=11.5cm, clip]{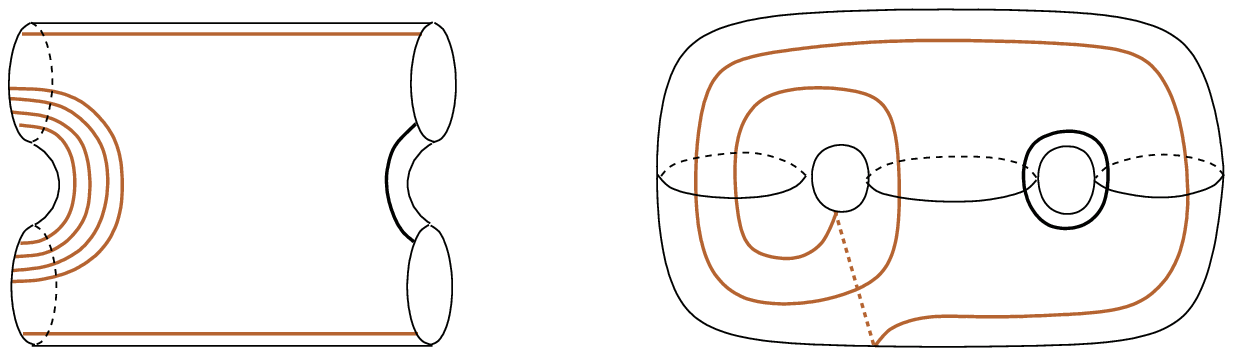}
  \linethickness{3pt}
  \put(65,0){(a)}
  \put(240,0){(b)}
  \put(47,52){$\partial D$}
  \put(90,80){$\partial D$}
  \put(90,22){$\partial D$}
  \put(67,25){$\Sigma'$}
  \put(90,52){$\partial E$}
  \put(-5,78){$\partial E_0'$}
  \put(-5,27){$\partial E_0'$}
  \put(128,78){$\partial E'$}
  \put(128,27){$\partial E'$}
  \put(156,52){$E_0'$}
  \put(316,52){$E'$}
  \put(241,40){$E''$}
  \put(265,34){$\partial E$}
  \put(240,80){$\partial D$}
  \put(310,20){$W$}
\end{overpic}
\captionof{figure}{(a) $\partial E$ and $\partial D$ lying in the $4$-holed sphere $\Sigma'$ (when $p = 5$ for example). (b) Two common dual disks $E'$ and $E''$ of $D$ and $E$ for $L(2, 1)$.}
\label{common_dual}
\end{center}

Now, let $E'$ be a common dual disk of $D$ and $E$.
Let $E'_0$ be the unique semiprimitive disk in $W$ disjoint from $E \cup E'$.
We recall that $E_0'$ is the meridian disk of the solid torus $\cl(W - \Nbd(E'))$.
Then $\partial E'_0$ intersects $\partial D$ in $p$ points.
Cut the surface $\partial W$ along the boundary circles $\partial E'$ and $\partial E'_0$ to obtain the $4$-holed sphere $\Sigma'$.
In $\Sigma'$, the boundary circle $\partial E$ is a single arc connecting two boundary circles of $\Sigma'$ that came from $\partial E'$.
But the boundary circle $\partial D$ in $\Sigma'$ consists of $(p-1)$ arcs connecting two boundary circles that came from $\partial E'_0$ together with two arcs connecting $\partial E'$ and $\partial E'_0$ as in Figure \ref{common_dual} (a).
Observe that if there is a common dual disk of $D$ and $E$ other than $E'$, then it cannot intersect $E' \cup E'_0$ otherwise it intersects $\partial D$ or $\partial E$ in more than one points.
Thus the boundary of any common dual disk $E''$ of $D$ and $E$ other than $E'$ is a circle inside $\Sigma'$, and hence, from the figure, it is obvious that one more common dual disk $E''$ other than $E'$ exists if and only if $p = 2$, and such an $E''$ is unique in this case.
See Figure \ref{common_dual} (b).

Conversely, suppose that every primitive pair has a common dual disk.
Choose any shell $\mathcal S_E = \{E_0, E_1, \cdots, E_p\}$ in $\mathcal D(V)$ centered at a primitive disk $E$.
Then one of the disks $E_{q'}$ and $E_q$ is primitive, where $q'$ satisfies $1 \leq q' \leq p/2$ and $qq' = \pm 1 \pmod{p}$, which forms a primitive pair with $E$.
If $\{E, E_{q'}\}$ is a primitive pair, then it has a common dual disk, and so, by Lemma \ref{lem:common_dual}, there is a semiprimitive disk in $V$ disjoint from $E$ and $E_{q'}$.
The only possible semiprimitive disk disjoint from $E$ and $E_{q'}$ is $E_{q'-1}$ or $E_{q'+1}$ by Lemma \ref{lem:surgery_on_primitive}, that is, $E_{q' - 1} = E_0$ or $E_{q'+1} = E_p$.
In any cases, we have $q = 1$ (the latter case implies $(p, q) = (2, 1)$ since we assumed $1 \leq q' \leq p/2$).
The same conclusion holds in the case where $\{E, E_q\}$ is a primitive pair.
\end{proof}

It is clear that any primitive disk is a member of infinitely many primitive pairs.
But a primitive pair can be contained at most two primitive triples, which is shown as follows.

\begin{theorem}
Given a lens space $L(p, q)$, for $1 \leq q \leq p/2$, with a genus-$2$ Heegaard splitting $(V, W; \Sigma)$ of $L(p, q)$, there is a primitive triple in $V$ if and only if $q = 2$ or $p= 2q +1$.
In this case, we have the following refinements.
\begin{enumerate}
\item If $p = 3$, then each primitive pair is contained in a unique primitive triple.
\item If $p = 5$, then each primitive pair having a common dual disk is contained in a unique primitive triple, and each having no common dual disk is contained in exactly two primitive triples.
\item If $p \geq 7$, then each primitive pair having a common dual disk is contained either in a unique or in no primitive triple, and each having no common dual disk is contained in a unique primitive triple.
\item Further, if $p = 3$, then each of the three primitive pairs in any primitive triple in $V$ has a unique common dual disk, which form a primitive triple in $W$.
    If $p \geq 5$, then exactly one of the three primitive pairs in any primitive triple has a common dual disk, which is unique.
\end{enumerate}
\label{thm:triple}
\end{theorem}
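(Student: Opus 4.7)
The approach is to translate primitive triples into configurations of consecutive primitive positions in a shell, using Lemmas \ref{lem:shell} and \ref{lem:surgery_on_primitive}. Given any primitive triple $\{E, D, F\}$ in $V$, take the unique shell $\mathcal{S}_E = \{E_0, E_1, \ldots, E_p\}$ centered at $E$ containing $D$; by Lemma \ref{lem:shell}, $D = E_j$ for some $j \in \{1, q', p-q', p-1\}$, assuming $\mathcal{S}_E$ is a $(p,q)$-shell (the $(p,q')$-shell case being symmetric). Since $F$ is primitive, disjoint from $E \cup E_j$, and isotopic to neither, Lemma \ref{lem:surgery_on_primitive}~(1) forces $F = E_{j-1}$ or $F = E_{j+1}$; primitivity of $F$ then requires $j \pm 1 \in \{1, q', p-q', p-1\}$. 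Conversely, any two consecutive indices lying in this set produce a primitive triple $\{E, E_j, E_{j+1}\}$, since consecutive disks in a shell are disjoint.

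Thus existence of a primitive triple reduces to when $\{1, q', p-q', p-1\}$ contains two consecutive integers, under the constraints $1 \leq q, q' \leq p/2$ and $qq' \equiv \pm 1 \pmod{p}$. Enumerating the candidate equations $q' - 1 = 1$, $q' + 1 = p - q'$, $p - q' + 1 = p - 1$, and so on, the only surviving conditions are $q' = 2$ (equivalently $p = 2q+1$) and $q' = (p-1)/2$ (equivalently $q = 2$ for $p \geq 5$), together with the degenerate case $p = 3$, which arises as $p = 2q+1$ with $q = 1$. This establishes the iff assertion.

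For refinements (1)--(3), fix a primitive pair $\{E, E_j\}$ inside the shell $\mathcal{S}_E$. Triples extending $\{E, E_j\}$ correspond exactly to which of $E_{j-1}, E_{j+1}$ is primitive. By Lemma \ref{lem:common_dual}, the pair $\{E, E_j\}$ admits a common dual disk iff some semiprimitive disk is disjoint from both, which via Lemma \ref{lem:surgery_on_primitive} together with the fact from Lemma \ref{lem:sequence} that only $E_0$ and $E_p$ are semiprimitive in a shell is equivalent to $j \in \{1, p-1\}$. A case analysis then finishes: for $p = 3$ the primitive positions are $\{1, 2\}$ and each pair extends uniquely; for $p = 5$, $q = 2$ the set is $\{1,2,3,4\}$ with consecutive pairs $(1,2), (2,3), (3,4)$, yielding the stated counts; for $p \geq 7$ with $q = 2$ only the middle pair $(q', p-q')$ is consecutive, so only no-common-dual pairs extend, and uniquely; for $p = 2q+1$ with $q \geq 3$ only the end pairs $(1,2)$ and $(p-2, p-1)$ are consecutive, so only common-dual pairs extend uniquely.

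For (4), I would analyze the three pairs within each primitive triple $\{E, E_j, E_{j+1}\}$. For $p = 3$, all three pairs satisfy $q = 1$, so Theorem \ref{thm:common_dual} gives each a unique common dual disk, and the symmetry between $V$ and $W$ (both being $(p, 1)$-handlebodies) shows the resulting three dual disks are pairwise disjoint and form a primitive triple in $W$. For $p \geq 5$, I would apply Lemma \ref{lem:shells_crossing} to each of the three pairs in turn, tracking the $(p,q)$-versus-$(p,q')$-shell type at each vertex: exactly one of the three pairs will have its members sitting in positions $(1,2)$ or $(p-2,p-1)$ of the relevant shell, and this is precisely the pair carrying a (necessarily unique) common dual by the analysis in the previous paragraph. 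The main obstacle will be this final step: verifying that precisely one (not zero, not two) of the three pairs admits a common dual requires carefully chasing the shell-type alternation of Lemma \ref{lem:shells_crossing}~(2) as one cycles through the three vertices of the triple, ruling out configurations in which the alternation would force either zero or multiple pairs into the end positions simultaneously.
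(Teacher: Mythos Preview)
Your overall strategy---reducing primitive triples to pairs of consecutive primitive positions in a shell via Lemmas~\ref{lem:shell} and~\ref{lem:surgery_on_primitive}---is exactly the paper's approach, and your treatment of the main ``if and only if'' and of refinements~(1) and~(2) is correct.

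However, your case analysis for refinement~(3) contains an error. In the $(p,q)$-shell with $p=2q+1$ and $q\geq 3$ (so $q'=2$), the primitive positions are $\{1,2,p-2,p-1\}$, and you correctly identify the consecutive pairs as $(1,2)$ and $(p-2,p-1)$. But your conclusion ``so only common-dual pairs extend uniquely'' is wrong: the pair $\{E,E_2\}$ (which has \emph{no} common dual disk, since $j=2\notin\{1,p-1\}$) also extends, via the neighbor $E_1$. Indeed, \emph{every} $j\in\{1,2,p-2,p-1\}$ is adjacent to another primitive position, so every pair in such a shell extends uniquely. The point you are missing is that within a single lens space \emph{both} shell types occur (the $(p,q)$-type and the $(p,q')$-type, depending on which vertex you center at), and the two cases you label ``$q=2$'' and ``$p=2q+1$'' are not two different lens spaces but the two shell types inside the \emph{same} lens space $L(2q+1,2)\cong L(2q+1,q)$. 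Common-dual pairs extend in one shell type and fail to extend in the other---that is precisely why the theorem says ``either in a unique or in no primitive triple.''

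For refinement~(4) with $p\geq 5$, your proposed approach via Lemma~\ref{lem:shells_crossing} is more laborious than necessary, and you correctly flag it as incomplete. The paper instead argues by contradiction: assume two of the three pairs in a triple $\{D,E,F\}$ have common dual disks, place the triple in a shell centered at the shared vertex $E$, and use Lemma~\ref{lem:common_dual} to force the adjacent disk to be semiprimitive, whence $p=3$. This avoids the shell-alternation bookkeeping entirely. Your sketch for $p=3$ is also too vague; the paper explicitly constructs the three common dual disks by cycling through shells centered at each vertex of the triple and checks directly that they are pairwise disjoint in $W$.
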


\begin{proof}
Note that $L(2q + 1, q)$ is homeomorphic to $L(2q+1, 2)$.
We prove first the ``if'' part together with the refinements.
Suppose that $q=2$ or $p=2q+1$, and let $\{D, E\}$ be any primitive pair of $V$.
By Lemma \ref{lem:shell}, there is a unique shell $\mathcal S_E = \{E_0, E_1, \cdots, E_p\}$ centered at $E$ containing $D$.  We may assume that $D$ is one of $E_1$, $E_2$ or $E_q$.

\smallskip

\noindent (1) If $p=3$, the disk $D$ is $E_1$, and so $E_2$ is the unique primitive disk disjoint from $E \cup E_1$ by Lemma \ref{lem:surgery_on_primitive}.
Thus $\{D, E\}$ is contained in the unique primitive triple $\{D, E, E_2\}$.

\smallskip

\noindent (2) If $p=5$, then the disk $D$ is either $E_1$ or $E_2$.
If $\{D, E\}$ has a common dual disk, then $D$ is $E_1$, and they are contained in the unique primitive triple $\{D, E, E_2\}$.
If $\{D, E\}$ has no common dual disk, then $D$ is $E_2$, and they are contained in exactly two primitive triples $\{D, E, E_1\}$ and $\{D, E, E_3\}$.

\smallskip

\noindent (3) If $p \geq 7$, then $D$ is either $E_1$, $E_2$ or $E_q$.
Observe that if one of $E_2$ and $E_q$ is primitive, then the other is not, while $E_1$ is always primitive.
If $\{D, E\}$ has no common dual disk, then $D$ is $E_2$ or $E_q$.
In this case, $\{D, E\}$ is contained in the unique primitive triple $\{D, E, E_1\}$ if $D$ is $E_2$, or in the unique triple $\{D, E, E_{q+1}\}$ if $D$ is $E_q$.
Suppose next that $\{D, E\}$ has a common dual disk.
Then $D$ is $E_1$, and hence $\{D, E\}$ is either contained in a unique primitive triple or contained in no primitive triple, according as $E_2$ is primitive or not.

\smallskip

\noindent (4) Let $\{D, E, F\}$ be any primitive triple in $V$, and let $\mathcal S_E = \{E_0, E_1, \cdots, E_p\}$ be the unique shell centered at $E$ containing $D$.  Again, we may assume that $D$ is one of $E_1$, $E_2$ or $E_q$.
Suppose that $p=3$.
Then we have $D = E_1$ and $F = E_2$ in the shell $\mathcal S_E = \{E_0, E_1, E_2, E_3\}$.
The primitive pairs $\{E, D\} = \{E, E_1\}$ and $\{E, F\} = \{E, E_2\}$ in the triple have unique common dual disks, say $E'$ and $E''$ respectively, by Lemma \ref{lem:common_dual} and Theorem \ref{thm:common_dual}.
Further, $\{E', E''\}$ is a primitive pair in $W$ (in fact, $\partial E''$ is the circle $e''$ in the proof of Lemma \ref{lem:sequence}).
Furthermore, exchanging the roles of $D$ and $E$, there exists the unique shell $\mathcal S_D = \{D_0, D_1, D_2, D_3\}$ centered at $D$ containing $E$. Here we have $D = E_1$, $D_0 = E_0$, $D_1 = E$ and $D_2 = E_2 = F$.
The primitive pair $\{D, D_2\} = \{D, F\}$ has a unique common dual disk, say $E'''$, forms a primitive pair $\{E', E'''\}$ with the common dual disk $E'$ of $\{D, E\} = \{D, D_1\}$.
Finally, considering the unique shell centered at $F$ containing $E$, we see that $\{E'', E'''\}$ is also a primitive pair in $W$.
Thus $\{E', E'', E'''\}$ is a primitive triple in $W$.

Next, suppose that $p \geq 5$, and let $\{D, E, F\}$ be any primitive triple of $V$.
Suppose, for contradiction, that at least two of the primitive pairs, say $\{D, E\}$ and $\{E, F\}$, in the triple have common dual disks.
Then, in the unique shell $\mathcal S_E = \{E_0, E_1, \cdots, E_p\}$ centered at $E$ containing $D$, the disk $D$ must be $E_1$ by Lemma \ref{lem:common_dual}.
Moreover, the disk $F$ is $E_2$ by Lemma \ref{lem:surgery_on_primitive}, and the disk $E_3$ is semiprimitive, that is, $E_p$ by Lemma \ref{lem:common_dual} again. Thus, we must have $p = 3$, a contradiction.

\smallskip

Conversely, suppose that there is a primitive triple $\{D, E, F\}$ in $V$.
Again, we consider the unique shell  $\mathcal S_E = \{E_0, E_1, \cdots, E_p\}$ centered at $E$ containing $D$.
Then $\mathcal S_E$ is a $(p, \bar{q})$-shell for some $\bar q \in \{q, q', p-q', p-q\}$, where $q'$ is the unique integer satisfying $qq' \equiv \pm 1$ (mod $p$) and $1 \leq q' \leq p/2$.
We first consider the case $\bar{q} = q$.
Then we may assume that $D$ is $E_1$ or $E_{q'}$ by Lemma \ref{lem:shell}.
If $D$ is $E_1$, then $F$ is $E_2$ by Lemma \ref{lem:surgery_on_primitive}, and so $q' = 2$ by Lemma \ref{lem:sequence}. Thus $p = 2q + 1$.
If $D$ is $E_{q'}$, then $F$ is $E_{q'-1}$ or $E_{q' + 1}$ by Lemma \ref{lem:surgery_on_primitive} again. That is, $q' - 1 = 1$ or $q' + 1 = p-q'$ by Lemma \ref{lem:sequence} again. Thus $p = 2q + 1$ or $q = 2$. We have the same argument for the other cases,  $\bar q \in \{ q', p-q', p-q\}$.
\end{proof}

Now we are ready to give a precise description of the primitive disk complex $\mathcal P(V)$ for the genus-$2$ Heegaard splitting of each lens space.
For convenience, we classify all the edges and $2$-simplices of $\mathcal P(V)$ as follows.

\begin{enumerate}
\item An edge of $\mathcal P(V)$ is called an {\it edge of type-$0$} ({\it type-$1$, type-$2$,} respectively) if a primitive pair representing the end vertices of the edge has no common dual disk (has a unique common dual disk, has exactly two common dual disks which form a primitive pair in $W$, respectively).
\item A $2$-simplex of $\mathcal P(V)$ is called a {\it $2$-simplex of type-$1$} ({\it of type-$3$,} respectively) if exactly one of the three primitive pairs in the primitive triple representing the three edges of the $2$-simplex has a unique common dual disk (if all the three pairs have unique common dual disks which form a primitive triple in $W$, respectively).
\end{enumerate}

By Theorems \ref{thm:common_dual} and \ref{thm:triple}, we see that each of the edges and $2$-simplices of $\mathcal P(V)$ is one of those types in the above.
In the following theorem, we describe the combinatorial structure of $\mathcal P(V)$ for each of the lens spaces, which is a direct consequence of Theorems \ref{thm:contractible}, \ref{thm:common_dual} and \ref{thm:triple}.

\begin{theorem}
Given any lens space $L(p, q)$, $1 \leq q \leq p/2$, with a genus-$2$ Heegaard splitting $(V, W; \Sigma)$, if $p \equiv \pm 1 \pmod q$, then the primitive disk complex $\mathcal P(V)$ is contractible and we have one of the following cases.
\begin{enumerate}
\item If $q \neq 2$ and $p \neq 2q + 1$, then $\mathcal P(V)$ is a tree, and every vertex has infinite valency.
    In this case,
    \begin{enumerate}
    \item if $p=2$ and $q=1$, then every edge is of type-$2$.
    \item if $p \geq 4$ and $q=1$, then every edge is of type-$1$.
    \item if $q \neq 1$, then every edge is of either type-$0$ or type-$1$, and infinitely many edges of type-$0$ and of type-$1$ meet in each vertex.
    \end{enumerate}
\item If $q = 2$ or $p=2q+1$, then $\mathcal P(V)$ is $2$-dimensional, and every vertex meets infinitely many $2$-simplices.
    In this case,
    \begin{enumerate}
    \item if $p = 3$, then every edge is of type-$1$, every $2$-simplex is of type-$3$, and every edge is contained in a unique $2$-simplex.
    \item if $p = 5$, then every edge is of either type-$0$ or type-$1$, and every $2$-simplex is of type-$1$. Every edge of type-$0$ is contained in exactly two $2$-simplices, while every edge of type-$1$ in a unique $2$-simplex.
    \item if $p \geq 7$, then every edge is of either type-$0$ or type-$1$, and every $2$-simplex is of type-$1$.
        Every edge of type-$0$ is contained in a unique $2$-simplex.
        Every edge of type-$1$ is contained in a unique $2$-simplex or in no $2$-simplex.
    \end{enumerate}
\end{enumerate}
If $p \not\equiv \pm 1 \pmod q$, then $\mathcal P(V)$ is not connected, and it consists of infinitely many tree components.
All the tree components are isomorphic to each other. Any vertex of $\mathcal P(V)$ has infinite valency, and further, infinitely many edges of type-$0$ and of type-$1$ meet in each vertex.
\label{thm:structure}
\end{theorem}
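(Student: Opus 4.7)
The plan is to assemble Theorem \ref{thm:structure} from the three previously established results: contractibility from Theorem \ref{thm:contractible}, the common-dual-disk dichotomy from Theorem \ref{thm:common_dual}, and the triple classification from Theorem \ref{thm:triple}. These together dictate almost every required feature of $\mathcal{P}(V)$, so the proof reduces to a careful case analysis, supplemented by separate arguments for the infinite valency at each vertex and, in the disconnected regime, for the isomorphism of the tree components.

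First I suppose $p \equiv \pm 1 \pmod q$. Contractibility is immediate from Theorem \ref{thm:contractible}. For the dimension: by Theorem \ref{thm:triple}, primitive triples exist if and only if $q = 2$ or $p = 2q+1$. When neither condition holds, $\mathcal{P}(V)$ contains no $2$-simplex, hence is at most $1$-dimensional; being connected and contractible it is a tree, giving case (1). When $q = 2$ or $p = 2q+1$, $\mathcal{P}(V)$ is $2$-dimensional and the three subcases (a), (b), (c) of case (2) are read off from the refinements (1), (2), (3) of Theorem \ref{thm:triple}. The edge-type classifications within cases (1) and (2) are exactly the conclusion of Theorem \ref{thm:common_dual}: $(p,q) = (2,1)$ produces type-$2$ edges, $q = 1$ with $p \geq 4$ produces type-$1$ edges, and $q \neq 1$ produces a mixture of type-$0$ and type-$1$ edges. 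For infinite valency at each vertex $E$, and in particular for the fact in (1c) that both edge types are realized infinitely often at $E$, I would fix a $(p,\bar{q})$-shell $\mathcal{S}_E = \{E_0, E_1, \ldots, E_p\}$ as built in Section \ref{subsec:the_link_of_a_primitive_disk}: the edges $\{E, E_1\}$ and $\{E, E_{q'}\}$ realize the two types, and since the shell depends on the choice of dual disk of $E$ (and there are infinitely many such choices), we obtain infinitely many edges of each type incident to $E$. A similar shell-variation, together with the triple counts from Theorem \ref{thm:triple}, shows that each vertex lies in infinitely many $2$-simplices in case (2).

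Now suppose $p \not\equiv \pm 1 \pmod q$. Then in particular $q \neq 2$ and $p \neq 2q+1$, so by Theorem \ref{thm:triple} there are no primitive triples and $\mathcal{P}(V)$ is at most $1$-dimensional. As observed in Section \ref{sec:primitive_disk_complexes}, every connected component of any full subcomplex of $\mathcal{D}(V)$ is contractible; thus each component of $\mathcal{P}(V)$ is a $1$-dimensional contractible complex, i.e.\ a tree. Infinite valency and the mixture of type-$0$ and type-$1$ edges at each vertex are local and follow from the same shell argument as above.

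The main obstacle is the last clause, that all tree components are pairwise isomorphic. For this I would invoke the natural action of the genus-two Goeritz group of $L(p,q)$ on $\mathcal{P}(V)$ and show that this action is transitive on vertices. The key input is that for any primitive disk $E$, the solid torus $V_E$ together with its exterior yields a genus-one Heegaard splitting of $L(p,q)$; by the Bonahon--Otal uniqueness of Heegaard splittings of lens spaces, any two such genus-one splittings coming from primitive disks are isotopic, which supplies an orientation-preserving homeomorphism of $L(p,q)$ carrying $\Sigma$ to itself and any prescribed primitive disk to any other. This homeomorphism induces a simplicial automorphism of $\mathcal{P}(V)$ that takes the component of one primitive disk onto the component of another, so all components are mutually isomorphic as simplicial complexes, completing the proof.
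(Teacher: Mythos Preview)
Your overall strategy matches the paper's: Theorem~\ref{thm:structure} is presented there as a direct consequence of Theorems~\ref{thm:contractible}, \ref{thm:common_dual}, and \ref{thm:triple}, and your case-by-case assembly of those results for the connected regime, together with the shell argument for infinite valency, is correct.

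The genuine gap is in your argument for the last clause, that all tree components are isomorphic when $p \not\equiv \pm 1 \pmod q$. You claim that the Goeritz group acts transitively on vertices of $\mathcal{P}(V)$, justified by Bonahon--Otal uniqueness of the genus-one Heegaard surface. But uniqueness of the Heegaard \emph{surface} does not yield an isotopy carrying one solid torus of the splitting to the other: as Bonahon shows (Lemma~\ref{lem:Uniqueness of the Heegaard splitting of a lens space}), a side-swapping homeomorphism exists only when $q^2 \equiv 1 \pmod p$. Concretely, for a primitive pair $\{E,D\}$ without common dual disk the proof of Lemma~\ref{lem:shells_crossing} shows that $V_D$ is isotopic to the \emph{exterior} of $V_E$, not to $V_E$ itself; hence when $q^2 \not\equiv 1 \pmod p$ (for instance $L(18,5)$, which also satisfies $p \not\equiv \pm 1 \pmod q$) there is no element of the Goeritz group taking $E$ to $D$, and the vertex action has two orbits rather than one. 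Your argument can be repaired by aiming for transitivity on \emph{components} instead: every component contains a type-$0$ edge by your own shell argument, hence contains vertices from both orbits, so any two components are related by a Goeritz element carrying a vertex of one to a vertex of the other lying in the same orbit. You should also supply a separate argument for why the number of components is infinite, which does not follow from what you have written.
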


Figure \ref{shape} illustrates a portion of each of the contractible primitive disk complexes $\mathcal P(V)$ classified in the above, together with its surroundings in $\mathcal D(V)$.
We label simply $E$ or $E_j$ for the vertices represented by disks $E$ or $E_j$.
In the case (2)-(b), the complex $\mathcal P(V)$ for $L(5, 2)$, every edge is contained a unique ``band''.
The edges in the boundary of a band are of type-$1$, while the edges inside a band are of type-$0$.
The whole figure of $\mathcal P(V)$ for $L(5, 2)$ can be imagined as the union of infinitely many bands such that any of two bands are disjoint from each other or intersects in a single vertex.
In the case (2)-(c), there are two kind of shells $\mathcal S_E = \{E_0, E_1, \cdots, E_p\}$ in $\mathcal P(V)$ centered at a primitive disk $E$.
The first one has primitive disks $E_1, E_q, E_{p-q}$ and $E_{p-1}$, while the second one has $E_1, E_2, E_{p-2}$ and $E_{p-1}$.
Figure \ref{shape} (2)-(c) illustrates an example of the first one.

\begin{center}
\begin{overpic}[width=10cm, clip]{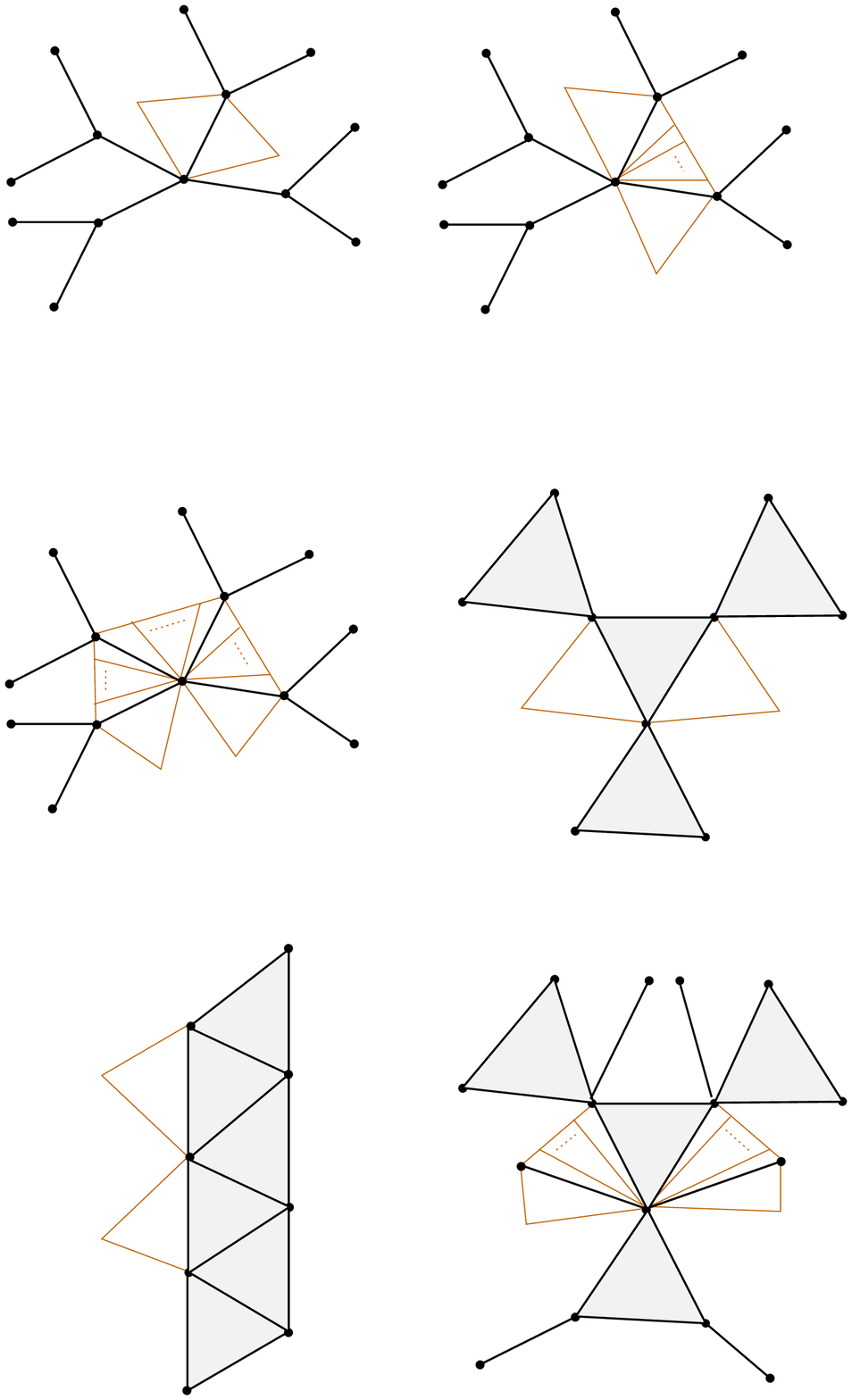}
  \linethickness{3pt}

     \put(46,373){(1)-(a)}
     \put(57,428){\small $E$}
     \put(32,463){\small $E_0$}
     \put(72,473){\small $E_1$}
     \put(93,446){\small $E_2$}
     \put(45,423){\small $2$}
     \put(76,426){\small $2$}
     \put(103,416){\small $2$}
     \put(108,438){\small $2$}
     \put(25,403){\small $2$}
     \put(14,415){\small $2$}
     \put(15,437){\small $2$}
     \put(19,461){\small $2$}
     \put(40,439){\small $2$}
     \put(61,476){\small $2$}
     \put(90,466){\small $2$}
     \put(62,451){\small $2$}

     \put(200,373){(1)-(b)}
     \put(199,426){\small $E$}
     \put(176,468){\small $E_0$}
     \put(217,472){\small $E_1$}
     \put(233,419){\small $E_{p-1}$}
     \put(215,397){\small $E_p$}
     \put(191,422){\small $1$}
     \put(219,426){\small $1$}
     \put(252,425){\small $1$}
     \put(254,439){\small $1$}
     \put(171,403){\small $1$}
     \put(160,415){\small $1$}
     \put(162,437){\small $1$}
     \put(165,461){\small $1$}
     \put(186,438){\small $1$}
     \put(207,476){\small $1$}
     \put(236,466){\small $1$}
     \put(208,451){\small $1$}

     \put(46,198){(1)-(c)}
     \put(57,255){\small $E$}
     \put(48,229){\small $E_0$}
     \put(18,258){\small $E_1$}
     \put(98,262){\small $E_{p-1}$}
     \put(75,233){\small $E_p$}
     \put(45,254){\small $1$}
     \put(76,259){\small $1$}
     \put(103,248){\small $1$}
     \put(102,277){\small $0$}
     \put(25,233){\small $0$}
     \put(14,247){\small $1$}
     \put(15,278){\small $0$}
     \put(19,292){\small $1$}
     \put(40,270){\small $0$}
     \put(61,306){\small $0$}
     \put(90,297){\small $1$}
     \put(69,278){\small $0$}

     \put(200,198){(2)-(a)}
     \put(219,252){\small $E$}
     \put(163,255){\small $E_0$}
     \put(198,295){\small $E_1$}
     \put(227,295){\small $E_2$}
     \put(261,254){\small $E_3$}
     \put(213,220){\small $1$}
     \put(198,234){\small $1$}
     \put(229,233){\small $1$}
     \put(202,267){\small $1$}
     \put(230,267){\small $1$}
     \put(215,293){\small $1$}
     \put(174,284){\small $1$}
     \put(164,316){\small $1$}
     \put(194,308){\small $1$}
     \put(270,311){\small $1$}
     \put(260,282){\small $1$}
     \put(242,308){\small $1$}

     \put(46,7){(2)-(b)}
     \put(51,105){\small $E$}
     \put(21,132){\small $E_0$}
     \put(49,154){\small $E_1$}
     \put(99,132){\small $E_2$}
     \put(100,87){\small $E_3$}
     \put(49,63){\small $E_4$}
     \put(21,77){\small $E_5$}
     \put(76,44){\small $0$}
     \put(76,65){\small $0$}
     \put(76,85){\small $0$}
     \put(76,105){\small $0$}
     \put(76,127){\small $0$}
     \put(76,149){\small $0$}
     \put(76,168){\small $0$}
     \put(55,46){\small $1$}
     \put(55,85){\small $1$}
     \put(55,128){\small $1$}
     \put(98,68){\small $1$}
     \put(98,111){\small $1$}
     \put(98,153){\small $1$}

     \put(200,7){(2)-(c)}
     \put(211,79){\small $E$}
     \put(165,81){\small $E_0$}
     \put(161,103){\small $E_1$}
     \put(185,131){\small $E_q$}
     \put(243,131){\small $E_{q+1}$}
     \put(263,105){\small $E_{p-1}$}
     \put(261,87){\small $E_p$}
     \put(213,57){\small $0$}
     \put(198,71){\small $1$}
     \put(229,70){\small $0$}
     \put(202,104){\small $0$}
     \put(230,104){\small $0$}
     \put(215,130){\small $1$}
     \put(174,121){\small $1$}
     \put(164,153){\small $0$}
     \put(194,145){\small $0$}
     \put(270,148){\small $0$}
     \put(260,119){\small $0$}
     \put(242,145){\small $1$}
     \put(173,49){\small $1$}
     \put(190,93){\small $1$}
     \put(245,95){\small $1$}
     \put(246,45){\small $1$}
     \put(208,143){\small $1$}
     \put(227,143){\small $1$}
\end{overpic}


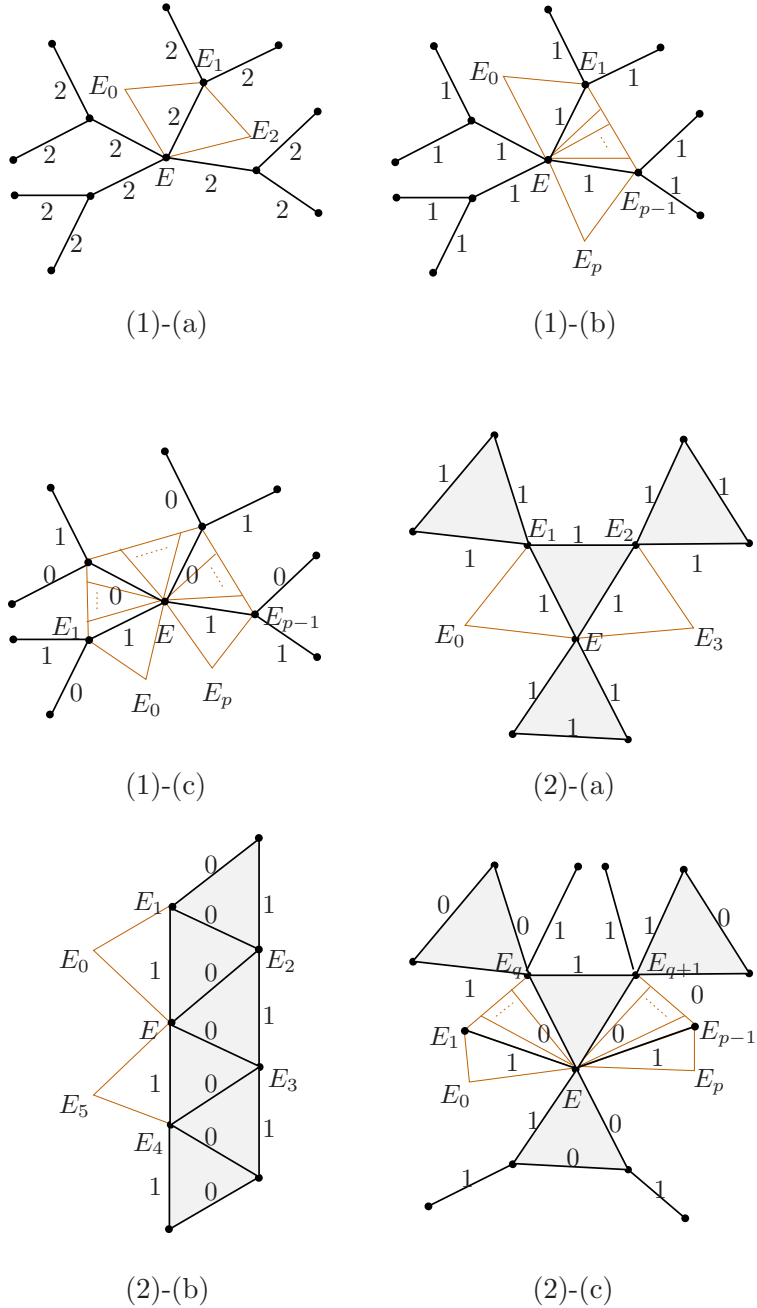
\captionof{figure}{A portion of each primitive disk complex $\mathcal P(V)$ together with the associated shells in $\mathcal D(V)$. Each number designates the type of the edge.}
\label{shape}
\end{center}



\section{The genus-2 Goeritz groups of lens spaces}
\label{sec:Goeritz_groups}

\subsection{The primitive disks under the action of the Goeritz group}
\label{subsec:the_primitive_disks_under_the_action_of_the_Goeritz_group}

By Bonahon-Otal \cite{BO83} each lens space admits
a unique Heegaard surface of each genus $g \geqslant 1$ up to isotopy.
Further, they showed that the two handlebodies of each genus-$g$ Heegaard splitting
are isotopic to each other when $g \geq 2$.
However, the genus-1 Heegaard splitting of a lens space is somewhat more rigid in the following sense.
\begin{lemma}[Bonahon \cite{Bon83}]
\label{lem:Uniqueness of the Heegaard splitting of a lens space}
There exists an orientation-preserving
homeomorphism $\iota$ of $L(p,q)$ that exchanges the two solid tori of
the genus-$1$ Heegaard splitting if and only if
$q^2 \equiv 1 \pmod p$.
\end{lemma}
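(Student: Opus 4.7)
The plan is to handle the two implications by quite different means: an explicit construction for sufficiency, and a homological obstruction via the linking form for necessity.

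For the ``if'' direction, I would realize $L(p,q)$ as the quotient $S^3/\mathbb Z_p$ where $S^3 \subset \mathbb C^2$ and the generator of $\mathbb Z_p$ acts by $\rho(z_1,z_2) = (\zeta z_1, \zeta^q z_2)$ with $\zeta = e^{2\pi i/p}$. The two solid tori of the genus-$1$ Heegaard splitting descend from $\{|z_1|\leq|z_2|\}$ and $\{|z_2|\leq|z_1|\}$. The involution $\sigma(z_1,z_2) = (z_2,z_1)$ of $S^3$ is orientation-preserving (as a real linear map of $\mathbb R^4$ its determinant is $+1$) and exchanges the two solid tori. A direct computation gives $\sigma \rho \sigma^{-1}(z_1,z_2) = (\zeta^q z_1, \zeta z_2)$, and this lies in $\langle \rho\rangle$ precisely when $q^2 \equiv 1 \pmod p$, in which case $\sigma \rho \sigma^{-1}=\rho^{q}$. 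Under this hypothesis $\sigma$ normalizes the deck group and so descends to an orientation-preserving self-homeomorphism $\iota$ of $L(p,q)$ that exchanges the two solid tori.

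For the ``only if'' direction, suppose such an $\iota$ exists. By the uniqueness of the genus-$1$ Heegaard surface of $L(p,q)$ up to isotopy (Bonahon--Otal), I may assume $\iota(V) = W$ after isotopy. Let $l_V$ and $l_W$ be oriented core circles of $V$ and $W$; reading off the gluing of the splitting gives $[l_V]$ as a generator of $H_1(L(p,q)) \cong \mathbb Z/p$ and $[l_W] \equiv \pm q\,[l_V] \pmod p$. Since $\iota$ carries the core of $V$ to a curve isotopic up to orientation to the core of $W$, the induced map on $H_1$ is multiplication by a unit $u$ with $u \equiv \pm q \pmod p$. On the other hand, any orientation-preserving self-homeomorphism of a lens space preserves the torsion linking form $\lambda \colon H_1 \times H_1 \to \mathbb Q/\mathbb Z$, whose value on $([l_V],[l_V])$ is a unit multiple of $1/p$, so preservation by multiplication by $u$ forces $u^2 \equiv 1 \pmod p$. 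Combining with $u \equiv \pm q$ yields $q^2 \equiv 1 \pmod p$.

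The main obstacle I anticipate is the orientation bookkeeping in the linking-form step: verifying the formula $[l_W] \equiv \pm q\,[l_V]$ from the chosen gluing convention of the splitting, and pinning down signs precisely enough to reach $u^2 \equiv 1 \pmod p$ rather than the weaker $u^2 \equiv \pm 1 \pmod p$ one would obtain from purely homological considerations. By contrast, the sufficient direction via the $S^3/\mathbb Z_p$ model is routine; the descended map is orientation-preserving because $\sigma$ itself is.
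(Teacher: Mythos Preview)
The paper does not prove this lemma at all; it is stated as a citation of Bonahon \cite{Bon83} and used as a black box. So there is no ``paper's own proof'' to compare against, and your proposal actually supplies more than the paper does.

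Your argument is correct and follows the standard line. The sufficiency via the $S^3/\mathbb Z_p$ model is fine: the key computation $\sigma\rho\sigma^{-1}=\rho^q$ when $q^2\equiv 1\pmod p$ is exactly what makes $\sigma$ descend. For necessity, the linking-form obstruction is the right tool. One small correction: under the usual gluing conventions one gets $[l_W]\equiv \pm q^{-1}[l_V]$ rather than $\pm q[l_V]$ (the longitude of $W$ picks up the \emph{inverse} of $q$ modulo $p$ from the off-diagonal entry of the gluing matrix). This does not affect the conclusion, since $(q^{-1})^2\equiv 1\pmod p$ if and only if $q^2\equiv 1\pmod p$; and you already flag this bookkeeping as the main point needing care. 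Your worry about getting only $u^2\equiv \pm 1$ is unfounded: an orientation-preserving homeomorphism preserves the linking form exactly (not just up to sign), so $u^2\lambda(g,g)=\lambda(g,g)$ in $\mathbb Q/\mathbb Z$ forces $u^2\equiv 1\pmod p$ on the nose.
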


Given a genus-$g$ Heegaard splitting of a 3-manifold, the {\it Goeritz group} of the splitting is the group of isotopy classes
of orientation-preserving homeomorphisms of the manifold that preserve each of the handlebodies of the splitting setwise.
By Lemma \ref{lem:Uniqueness of the Heegaard splitting of a lens space},
the Goeritz group of a splitting for each lens space depends only on the genus of the splitting, and hence we say {\it the genus-$g$ Goeriz group of a lens space} without mentioning a specific genus-$g$ splitting of it.
We denote by $\mathcal{G} = \mathcal{G}_{L(p,q)}$
the genus-2 Goeritz group of $L(p,q)$.
We recall that $(V, W; \Sigma)$ is a genus-$2$ Heegaard splitting of a lens space $L(p, q)$ with $1 \leq q \leq p/2$.
We denote by $V_D$ the solid torus $\cl(V - \Nbd(D))$ where $D$ is an essential nonseparating disk in $V$.

Throughout the section, we will assume that $p \equiv \pm 1 \pmod q$, that is, the primitive disk complex $\mathcal P(V)$ is connected. Further we fix the following:
\begin{itemize}
\item
A primitive disk $E$ in $V$.
\item
A $(p, q)$-shell $\mathcal{S}_E = \{E_0, E_1, \ldots, E_p \}$ centered at $E$.
\item
The unique $(p, q')$-shell $\mathcal{S}_D = \{ D_0, D_1, \ldots, D_p \}$ centered at $D = E_{q'}$
such that $E=D_q$, where $q'$ is the unique integer satisfying $qq' \equiv \pm 1 \pmod p$ and $1 \leq q' \leq p/2$.
\end{itemize}
We use the above four primitive disks $E$, $D$, $E_1$, $D_1$ to describe the orbits of the action of
the genus-$2$ Goeritz group to the set of primitive pairs.
Note that if $q=1$, then $D = E_1$ and $E=D_1$.

\begin{lemma}
\label{thm:number of orbits of primitive disks}
If $q^2 \equiv 1 \pmod p$, the action of the Goeritz group $\mathcal{G}$ on
the set of vertices of the primitive disk complex $\mathcal P(V)$ is transitive.
If $q^2 \not\equiv 1 \pmod p$, the action of $\mathcal{G}$ on
the set of vertices of $\mathcal P(V)$ has exactly two orbits
$\mathcal{G} \cdot \{E\}$ and $\mathcal{G} \cdot \{D\}$.
\end{lemma}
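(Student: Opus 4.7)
Plan: I would prove the lemma in two main steps. First, establish that when $q^2 \not\equiv 1 \pmod p$ the disks $E$ and $D$ lie in distinct $\mathcal{G}$-orbits, by producing a $\mathcal{G}$-invariant that separates them. Second, show that every primitive disk in $V$ lies in the $\mathcal{G}$-orbit of either $E$ or $D$. The key tool in both steps is the natural assignment $F \mapsto V_F = \cl(V - \Nbd(F))$ of a genus-$1$ Heegaard splitting of $L(p, q)$ to each primitive disk $F$.

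For the first step, assign to each primitive disk $F$ in $V$ the isotopy class $\tau(F) := [V_F]$ of the solid torus $V_F$ in $L(p, q)$. Since $V_F$ and its complement form a genus-$1$ Heegaard splitting of $L(p, q)$, and by Bonahon--Otal this splitting is unique up to isotopy, $\tau(F)$ takes values in $\{[V_E], [V_D]\}$. The homology computation in the proof of Lemma \ref{lem:shells_crossing}(2) (namely $[l_D] = q' [l_E]$ in $H_1(L(p, q))$ with $1 < q' \leq p/2$) already shows that these two isotopy classes are distinct. For any $\phi \in \mathcal{G}$ one has $\phi(V_F) = V_{\phi(F)}$; when $q^2 \not\equiv 1 \pmod p$, Lemma \ref{lem:Uniqueness of the Heegaard splitting of a lens space} forbids any orientation-preserving self-homeomorphism of $L(p, q)$ from exchanging the two sides of its essentially unique genus-$1$ Heegaard splitting, so $\phi$ must preserve the isotopy class of each side. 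Therefore $V_F$ and $V_{\phi(F)}$ are ambient isotopic, making $\tau$ a $\mathcal{G}$-invariant that separates $E$ from $D$ and yielding at least two orbits.

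For the second step, I would exploit the shell structure. By Lemma \ref{lem:shell}, every primitive disk $F$ is contained in some shell $\mathcal{S}_F$, and (via Lemma \ref{lem:shells_crossing}) the type of $\mathcal{S}_F$, either $(p, q)$ or $(p, q')$, matches $\tau(F)$. The main claim is that $\mathcal{G}$ acts transitively on the set of all $(p, q)$-shells, and independently on all $(p, q')$-shells. A $(p, q)$-shell is encoded by the combinatorially rigid data of a triple (a primitive disk in $V$, its chosen dual disk in $W$, the associated semiprimitive disks on each side) together with the $(p, q)$-slope condition on the torus exterior to $V_F$; any two such standard configurations are then related by a self-homeomorphism of $\Sigma$ that extends over both $V$ and $W$, hence by an element of $\mathcal{G}$. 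This gives transitivity within each $\tau$-fiber and completes the proof when $q^2 \not\equiv 1 \pmod p$. When $q^2 \equiv 1 \pmod p$, I would additionally use Bonahon's involution $\iota$ from Lemma \ref{lem:Uniqueness of the Heegaard splitting of a lens space}, suitably stabilized across the genus-$2$ Heegaard surface $\Sigma$, to realize a Goeritz element sending $E$ to $D$, thereby merging the two $\tau$-fibers into a single orbit.

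The main obstacle will be the transitivity claim in the second step: one must produce an explicit Goeritz element realizing a given combinatorial equivalence between two $(p, q)$-shells, which amounts to verifying that the induced homeomorphism of $\Sigma$ extends over both handlebodies $V$ and $W$. The primitivity and slope data should make such an extension plausible, but executing it requires care. The subcase $q^2 \equiv 1 \pmod p$ adds the further technical task of realizing $\iota$ concretely as an element of $\mathcal{G}$ after stabilization of the genus-$1$ splitting to $(V, W; \Sigma)$.
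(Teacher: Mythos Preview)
Your outline is sound, and Step~1 (the invariant $\tau(F)=[V_F]$ together with the homology computation from Lemma~\ref{lem:shells_crossing} and Bonahon's Lemma~\ref{lem:Uniqueness of the Heegaard splitting of a lens space}) is exactly how the paper separates the two orbits when $q^2\not\equiv 1\pmod p$. Likewise, in the case $q^2\equiv 1\pmod p$ your plan to realize Bonahon's involution $\iota$ inside $\mathcal{G}$ via uniqueness of the genus-$2$ splitting matches the paper.

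The difference lies in Step~2. You propose to prove transitivity within each $\tau$-fiber by showing that $\mathcal{G}$ acts transitively on the set of $(p,q)$-shells (and separately on $(p,q')$-shells), and you correctly identify the extension of a surface homeomorphism over both handlebodies as the main obstacle. The paper avoids this entirely: it invokes \emph{uniqueness of stabilization} (Bonahon--Otal). The genus-$2$ splitting $(V,W;\Sigma)$ is a stabilization of the genus-$1$ splitting determined by $V_F$, and the stabilizing $1$-handle is unique up to ambient isotopy. Hence whenever $V_F$ is isotopic to $V_E$, there is an ambient isotopy of $L(p,q)$ carrying the pair $(V,E)$ to $(V,F)$; its time-$1$ map lies in $\mathcal{G}$ and sends $E$ to $F$. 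This is a one-line argument once the Bonahon--Otal input is in hand, and it dissolves precisely the technical difficulty you anticipated. Your shell-transitivity route would work and ultimately reduces to the same fact, but packaged less efficiently; the paper's direct appeal to stabilization uniqueness is both shorter and conceptually cleaner.
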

\begin{proof}
Suppose first that $q^2 \equiv 1 \pmod p$.
By Lemma \ref{lem:Uniqueness of the Heegaard splitting of a lens space},
there exists an orientation-preserving homeomorphism $\iota$ of $L(p,q)$
that exchanges the solid tori of a genus-$1$ Heegaard splitting.
By the uniqueness of the genus-$2$ Heegaard splitting for $L(p,q)$ up to isotopy,
we can assume that $\iota$ preserves $V$, i.e. $\iota \in \mathcal{G}$.
Let $F$ be an arbitrary primitive disk in $V$.
Then the solid torus $V_F$ (and $V_{\iota (F)}$)
is isotopic to $V_E$.
Thus by the uniqueness of stabilization,
there exists an element $f \in \mathcal{G}$
such that $f(E) = F$ or $\iota (F)$.
This implies that $\{F\} \in \mathcal{G} \cdot \{E\}$.

Next, suppose that $q^2 \not\equiv 1 \pmod p$.
As in the proof of Lemma \ref{lem:shells_crossing},
$V_D$ is isotopic to the exterior of $V_E$ in $L(p,q)$.
If there exists an element $f \in \mathcal{G}$
such that $f(D) = E$, then $f$ maps $V_D$ to $V_E$, which contradicts
Lemma \ref{lem:Uniqueness of the Heegaard splitting of a lens space}.
\end{proof}

%
%

\begin{lemma}
\begin{enumerate}
\item
If $q = 1$,
the action of the Goeritz group $\mathcal{G}$ on the set of edges of the
primitive disk complex $\mathcal P(V)$ is transitive.
The two end points of the edge $\{ E, D \}$ can be exchanged
by the action of $\mathcal{G}$.
\item
If $q \neq 1$ and $q^2 \equiv 1 \pmod p$,
the action of $\mathcal{G}$
on the set of edges of $\mathcal P(V)$ has
exactly $2$ orbits $\mathcal{G} \cdot \{ E, D \}$ and $\mathcal{G} \cdot \{ E, E_1 \}$.
The two end points of each of the edges $\{ E, D \}$ and $\{ E, E_1 \}$ can be exchanged
by the action of $\mathcal{G}$.
\item
Otherwise,
the action of $\mathcal{G}$ on the set of edges of $\mathcal P(V)$ has
exactly $3$ orbits $\mathcal{G} \cdot \{ E, D \}$,
$\mathcal{G} \cdot \{ E, E_1 \}$ and $\mathcal{G} \cdot \{ D, D_1 \}$.
The two end points of each of the edges $\{ E, E_1 \}$ and $\{ D, D_1 \}$
can be exchanged by the action of $\mathcal{G}$ whereas those of $\{ E, D \}$ cannot.
\end{enumerate}
\label{lem:number of orbits of primitive pairs}
\end{lemma}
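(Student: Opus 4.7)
The plan is to combine the shell-based classification of primitive pairs (Lemmas~\ref{lem:shell} and~\ref{lem:shells_crossing}) with the vertex orbit structure (Lemma~\ref{thm:number of orbits of primitive disks}), and then to exhibit suitable exchanging involutions in $\mathcal{G}$. First I would observe that the dichotomy ``has a common dual disk'' versus ``has no common dual disk'' is $\mathcal{G}$-invariant. By the proof of Lemma~\ref{lem:shells_crossing}, whenever a primitive pair $\{F_1,F_2\}$ admits a common dual disk, the solid tori $V_{F_1}$ and $V_{F_2}$ are isotopic in $L(p,q)$, and hence both endpoints lie in the same $\mathcal{G}$-orbit of vertices; conversely, if $\{F_1,F_2\}$ has no common dual disk, then $V_{F_2}$ is isotopic to the exterior of $V_{F_1}$, so the two endpoints lie in the two different orbits $\mathcal{G}\cdot\{E\}$ and $\mathcal{G}\cdot\{D\}$ whenever these orbits are distinct.

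Next I would reduce an arbitrary edge $\{F_1,F_2\}$ of $\mathcal{P}(V)$ to one of the three model edges. By Lemma~\ref{thm:number of orbits of primitive disks} I can move $F_1$ by an element of $\mathcal{G}$ to either $E$ or $D$, and Lemma~\ref{lem:shell} then places $F_2$ in a unique $(p,\bar q)$-shell centered at $F_1$ at position $1$, $\bar q'$, $p-\bar q'$ or $p-1$. To identify these four positions with the two model edges at each of $E$ and $D$, I need an element of $\mathrm{Stab}_{\mathcal{G}}(E)$ (respectively, of $\mathrm{Stab}_{\mathcal{G}}(D)$) that acts on the shell as the reversal $E_j\mapsto E_{p-j}$. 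The remark following Lemma~\ref{lem:sequence} already identifies such a reversal with the effect of switching the dual-disk choice for $E$; geometrically I would realize it as the hyperelliptic involution $\sigma_E$ of $V$ that fixes $E$ and $E_0$ setwise and reverses the core of the solid torus $V_E$. This involution sends a $(p,q)$-curve on $\partial V_E$ to a $(p,-q)$-curve, so it extends over $W$ to an element of $\mathcal{G}$ exchanging the two shells at $E$ listed in that remark.

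With these tools in place, the three cases follow quickly. In Case~(1), $q=1$ forces $D=E_1$, every primitive pair has a common dual disk (Theorem~\ref{thm:common_dual}), the vertex action is transitive because $q^2\equiv 1\pmod p$, and the reduction identifies every edge with $\{E,E_1\}$; the involution $\sigma_E$ (composed with the reduction element of $\mathcal{G}$) exchanges its endpoints. In Case~(2) the two orbits are exactly the common-dual and no-common-dual classes; the model pair $\{E,E_1\}$ is swapped by $\sigma_E$, and the model pair $\{E,D\}$ is swapped by the Bonahon homeomorphism $\iota$ of Lemma~\ref{lem:Uniqueness of the Heegaard splitting of a lens space} (which exists because $q^2\equiv 1\pmod p$), chosen to preserve $V$ and, after adjustment by an element of $\mathcal{G}$ stabilizing the unordered pair $\{E,D\}$, to send $E$ to $D$. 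In Case~(3) the three orbits are forced by the vertex splitting together with the common-dual dichotomy, and the endpoints of $\{E,D\}$ cannot be exchanged because any $g\in\mathcal{G}$ with $g(E)=D$ would contradict Lemma~\ref{thm:number of orbits of primitive disks}; the model edges $\{E,E_1\}$ and $\{D,D_1\}$ are still swapped by $\sigma_E$ and $\sigma_D$.

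The main technical obstacle is the existence and action of $\sigma_E$: I must show not only that there is an element of $\mathcal{G}$ fixing $E$ and acting on the shell as the full reversal, but also that this reversal really interchanges $E_1\leftrightarrow E_{p-1}$ and $E_{q'}\leftrightarrow E_{p-q'}$. My plan for this is to use the remark after Lemma~\ref{lem:sequence} to characterize the reversal as the effect of swapping the dual-disk choice for $E$, and then to realize this swap by the order-$2$ symmetry of the $(p,q)$-curve $\partial E_0'$ on $\partial V_E$ that reverses the longitude of $V_E$; combined with the uniqueness of the genus-$2$ Heegaard splitting of $L(p,q)$, this produces an element of $\mathcal{G}$ carrying out the required reversal. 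Once $\sigma_E$ (and the analogous $\sigma_D$) is in hand, the remainder of the argument is bookkeeping on vertex orbits and on the common-dual dichotomy.
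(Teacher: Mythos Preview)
Your overall strategy matches the paper's: reduce an arbitrary edge to a model edge via Lemma~\ref{lem:shell}, distinguish the resulting orbits by the common-dual-disk dichotomy and by Lemma~\ref{thm:number of orbits of primitive disks}, and settle the exchange question by appealing to the symmetry of the situation. The paper carries this out a bit more economically: rather than building explicit involutions, it invokes Lemma~\ref{lem:shells_crossing} directly. That lemma says that if $\{A,B\}$ has a common dual disk, then $A$ sits at position $1$ (or $p-1$) in a $(p,q)$-shell centered at $B$ \emph{and vice versa}; the homogeneity of such shell configurations under $\mathcal{G}$ then immediately gives an element sending $(A,B)$ to $(B,A)$. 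The same symmetry handles the type-$0$ edge in case (2), without ever bringing in Bonahon's $\iota$. Your route through $\iota$ can be made to work, but it requires the extra step of arranging $\iota(E)=D$ rather than merely $\iota(V_E)\simeq V_D$, which in the end forces you back to the same shell-homogeneity argument.

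There is one genuine slip in your description of $\sigma_E$. You ask for an orientation-preserving involution of $V$ that fixes both $E$ and $E_0$ setwise and reverses the core of $V_E$. Such a map cannot exist: on the solid torus $V_E$ it would fix the meridian class $[\partial E_0]$ while reversing the longitude, hence act orientation-reversingly on $\partial V_E$. The element you actually want fixes $E$ and \emph{swaps} $E_0\leftrightarrow E_p$ (equivalently, swaps the two dual disks $E'$ and $E''$ of $E$); this is precisely the generator $\gamma$ of $\mathcal{G}_{\{E\}}$ in Lemma~\ref{lem:stabilizer of a primitive disk}, and the remark after Lemma~\ref{lem:sequence} confirms it reverses the shell. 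Note also that $\sigma_E$ alone does not exchange the endpoints of $\{E,E_1\}$, since it fixes $E$; you still need the symmetry of Lemma~\ref{lem:shells_crossing} (or an element carrying one $(p,q)$-shell to another) to produce the actual swap $E\leftrightarrow E_1$. Once you correct these two points, your argument and the paper's are essentially the same.
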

\begin{proof}
(1) Let $\{ A , B \}$ be a primitive pair.
Then by Lemma \ref{lem:shell}, there exists a unique shell $\mathcal S_B = \{B_0,  B_1, B_2, \ldots, B_p \}$
centered at $B$ containing $A$.
Without loss of generality, we may assume that $A = B_1$.
By the definition of shells,
we have $\{A, B \} \in \mathcal{G} \cdot \{ E, E_1 \}$.
Since in this case we have $q = q' = 1$,
it follows from Lemma \ref{lem:shells_crossing} that
the two end points of the edge $\{ E, E_1 \}$ can be exchanbed
by the action of $\mathcal{G}$.

\noindent (2) In this case, we have $q = q' \neq 1$.
Let $\{ A , B \}$ be a primitive pair.
Then by Lemma \ref{lem:shell}, there exists a unique shell $\mathcal S_B = \{B_0,  B_1, B_2, \ldots, B_p \}$
centered at $B$ containing $A$.
Without loss of generality, we may assume that $A = B_1$ or $B_q$.
It follows directly from the definition of shells
that in the former case we have $\{A, B \} \in \mathcal{G} \cdot \{ E, E_1 \}$, and
in the latter case we have $\{A, B \} \in \mathcal{G} \cdot \{ E, D \}$.
Since the primitive pair $\{E, E_1\}$ admits a common dual disk whereas the pair $\{E, D\}$ does not,
we see that  $\mathcal{G} \cdot \{ E, D \} \cap \mathcal{G} \cdot \{ E, E_1 \} = \emptyset$.
By Lemma \ref{lem:shells_crossing},
the two end points of each of the edges $\{ E, D \}$ and $\{ E, E_1 \}$ can be exchanged
by the action of $\mathcal{G}$.

\noindent (3)
In this case we have $q \neq q'$, $q > 1$ and $q' > 1$.
Let $\{ A , B \}$ be a primitive pair.
Then by Lemma \ref{lem:shell}, there exists a unique shell $\mathcal S_B = \{B_0,  B_1, B_2, \ldots, B_p \}$
centered at $B$ containing $A$.
Without loss of generality, we may assume that $A = B_i$, where $1 \leq i \leq p/2$.
Again by the definition of shells we have:
\begin{description}
\item[{\it Case }$1$]
If $\mathcal S_B$ is a $(p,q)$-shell and $A=B_1$, then $\{A, B \} \in \mathcal{G} \cdot \{ E, E_1 \}$.
\item[{\it Case }$2$]
If $\mathcal S_B$ is a $(p,q')$-shell and $A=B_1$, then $\{A, B \} \in \mathcal{G} \cdot \{ D, D_1 \}$.
\item[{\it Case }$3$]
If $\mathcal S_B$ is a $(p,q)$-shell and $A=B_{q'}$, or
if $\mathcal S_B$ is a $(p,q')$-shell and $A=B_{q}$,
then $\{A, B \} \in \mathcal{G} \cdot \{ E, D \}$.
\end{description}
By Lemma \ref{lem:shells_crossing},
the two end points of each of the edges $\{ E, E_1 \}$ and $\{ D, D_1 \}$ can be exchanged
by an involution of $\mathcal{G}$.
Since $\mathcal{G} \cdot \{E\} \cap \mathcal{G} \cdot \{D\} = \emptyset$ by
Lemma \ref{thm:number of orbits of primitive disks},
the two end points of $\{ E, D \}$ cannot be exchanged.
\end{proof}

\subsection{Presentations of the Goertiz gorups}
\label{subsec:presentations}

The following is a specialized version of Bass-Serre Structure Theorem, which is the key to obtain a  presentation
of the Goeritz group $\mathcal{G}$.
\begin{theorem}[Serre \cite{S}]
\label{thm:theorem by Brown}
Suppose that a group $G$ acts on a tree $\mathcal{T}$
without inversion on the edges.
If there exists a subtree $\mathcal{L}$ of $\mathcal{T}$
such that every vertex $($every edge, respectively$)$ of $\mathcal{T}$ is equivalent modulo $G$
to a unique vertex $($a unique edge, respectively$)$ of $\mathcal{L}$.
Then $G$ is the free product of the isotropy groups $G_v$ of the
vertices $v$ of $\mathcal{L}$, amalgamated along the isotropy groups $G_e$
of the edges $e$ of $\mathcal{L}$.
\end{theorem}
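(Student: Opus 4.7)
The plan is to apply the standard Bass--Serre dictionary between group actions on trees and graphs of groups, exploiting the fact that the hypothesis on $\mathcal{L}$ says exactly that $\mathcal{L}$ is a strict fundamental domain for the $G$-action on $\mathcal{T}$. First I would endow $\mathcal{L}$ with the structure of a graph of groups $(\mathcal{L}, G_{\bullet})$: to each vertex $v$ of $\mathcal{L}$ attach the isotropy group $G_v \leq G$, to each edge $e$ attach $G_e$, and let the edge-to-vertex maps be the inclusions inside $G$, which are well defined because the endpoints of $e \subset \mathcal{T}$ are fixed by every element of $G_e$. Since $\mathcal{L}$ is a subtree of a tree, it is itself a tree, so the fundamental group $\pi_1(\mathcal{L}, G_{\bullet})$ is the iterated amalgamated free product $\Gamma := \ast_{G_e} G_v$, and its universal property produces a canonical homomorphism $\phi \colon \Gamma \to G$ extending the inclusions $G_v \hookrightarrow G$.

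For surjectivity, fix a base vertex $v_0 \in \mathcal{L}$, and given $g \in G$ consider the unique geodesic $v_0 = u_0, e_1, u_1, \ldots, e_n, u_n = g v_0$ in $\mathcal{T}$. I would trace this geodesic inductively: at step $i$, the edge $e_i$ emanating from $u_{i-1}$ is $G$-equivalent to a unique edge of $\mathcal{L}$, so a suitable element of the isotropy group $G_{u_{i-1}}$ can be chosen to carry $e_i$ onto that representative. Accumulating these local corrections produces an element of $\phi(\Gamma)$ that sends $v_0$ to $g v_0$; after adjusting by an element of $G_{v_0}$ one concludes $g \in \phi(\Gamma)$. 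The no-inversion hypothesis guarantees that each fold is unambiguous, and the uniqueness clause of the hypothesis is what makes the inductive choice well defined.

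The injectivity of $\phi$ is the main obstacle. Here I would build the Bass--Serre tree $\widetilde{\mathcal{T}}$ of $(\mathcal{L}, G_{\bullet})$ as the simplicial graph whose vertex set is $\bigsqcup_{v \in \mathcal{L}} \Gamma / G_v$, whose edge set is $\bigsqcup_{e \in \mathcal{L}} \Gamma / G_e$, with the natural incidence. The normal form theorem for tree products guarantees that $\widetilde{\mathcal{T}}$ really is a tree and that no nontrivial element of $\Gamma$ acts on it with a global fixed point. There is then a natural $\Gamma$-equivariant simplicial map $\Psi \colon \widetilde{\mathcal{T}} \to \mathcal{T}$ sending $\gamma G_v \mapsto \phi(\gamma) \cdot v$, and the uniqueness half of the fundamental-domain hypothesis translates directly into the statement that $\Psi$ is a bijection on vertices and on edges. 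Once $\Psi$ is known to be a $\Gamma$-equivariant isomorphism of trees, any $w \in \ker \phi$ acts trivially on $\mathcal{T}$ and hence, via $\Psi^{-1}$, on $\widetilde{\mathcal{T}}$; the normal form then forces $w = 1$.

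I expect the folding argument for surjectivity to be straightforward but slightly fiddly to package, and the verification that $\Psi$ is well defined, simplicial, and bijective --- the step in which the uniqueness hypothesis is entirely consumed --- to be the principal bookkeeping task. The conceptual heart, namely the normal form theorem for amalgamated products over a tree, I would simply quote from Serre's \emph{Trees} rather than redevelop from scratch.
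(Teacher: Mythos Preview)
The paper does not prove this theorem at all: it is stated as a quotation from Serre's \emph{Trees} and used as a black box in the subsequent computations of the Goeritz groups. So there is no ``paper's own proof'' to compare against. Your outline is the standard Bass--Serre argument one finds in Serre's book, and as a sketch it is correct in spirit; in the context of this paper, however, the appropriate move is simply to cite \cite{S} rather than to reprove the result.
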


In the following we will denote by $\mathcal{G}_{\{A_1 , A_2 , \ldots , A_k \}}$ the subgroup of
the genus-2 Goeritz group $\mathcal{G}$ consisting of elements that preserve each of
$A_1$, $A_2 , \ldots , A_k$ setwise, where each $A_i$ will be a disk or the union of disks in $V$ or $W$.

\begin{lemma}
\label{lem:stabilizer of a primitive disk}
Let $A$ be a primitive disk in $V$.
Then we have
$\mathcal{G}_{\{ A \}} =
\langle \alpha \mid \alpha^2 \rangle
\oplus \langle \beta, \gamma \mid
{\gamma}^2 \rangle$, where
$\alpha$ is the hyperelliptic involution of both
$V$ and $W$, $\beta$ is the half-twist along a reducing sphere, and
$\gamma$ exchanges two disjoint dual disks of $A$ as described in Figure ${\rm \ref{fig:isotropy_of_a_primitive_disk}}$.
\end{lemma}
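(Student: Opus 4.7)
The strategy is to cut $V$ along the primitive disk $A$ to reduce the analysis of $\mathcal{G}_{\{A\}}$ to understanding the mapping class group of the resulting genus-$1$ Heegaard splitting of $L(p,q)$ together with the marking data coming from $A$.

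First, I would exploit that $A$ is primitive to observe that the solid torus $V_A = \cl(V - \Nbd(A))$ together with $W$ forms a genus-$1$ Heegaard splitting of $L(p,q)$. Cutting along $A$ produces two parallel meridian disks $A^+$ and $A^-$ on $\partial V_A$, and the original splitting is recovered by identifying $A^+$ with $A^-$ via a $1$-handle. This identifies $\mathcal{G}_{\{A\}}$ with the group of isotopy classes of orientation-preserving self-homeomorphisms of $L(p,q)$ preserving $V_A$ setwise and the unordered pair $\{A^+, A^-\}$ on $\partial V_A$, up to the $1$-handle twists that form a distinct factor.

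Next, I would verify that $\alpha$, $\beta$, $\gamma$ all lie in $\mathcal{G}_{\{A\}}$ and satisfy the stated orders. The hyperelliptic involution $\alpha$ extends to both $V$ and $W$, preserves every non-separating disk of $V$ (in particular $A$), and lies in the center of the mapping class group of $\Sigma$; this yields $\alpha^2=1$, centrality of $\alpha$, and the direct summand $\langle \alpha \mid \alpha^2 \rangle$. The half-twist $\beta$ along a reducing sphere disjoint from $A$ clearly preserves $A$, and corresponds, after cutting, to a Dehn twist in the $1$-handle gluing $A^+$ to $A^-$. The involution $\gamma$ swapping two disjoint dual disks of $A$ in $W$ restricts on $\partial V_A$ to the involution interchanging $A^+$ and $A^-$, and $\gamma^2 = 1$ follows directly from its construction.

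Finally, the main step is to prove that $\alpha$, $\beta$, $\gamma$ exhaust $\mathcal{G}_{\{A\}}$ and satisfy no further relations. Using the uniqueness of the genus-$1$ Heegaard splitting of $L(p,q)$ (Lemma \ref{lem:Uniqueness of the Heegaard splitting of a lens space}) and the structure of the mapping class group of a solid torus preserving a pair of parallel meridian disks, I would identify $\beta$ and $\gamma$ as generators of the quotient $\mathcal{G}_{\{A\}}/\langle\alpha\rangle$. To rule out further relations, I would show that $\langle\beta,\gamma\rangle$ is the free product $\mathbb{Z}\ast\mathbb{Z}/2$ via a normal-form argument on the cut-open picture, verifying that every nontrivial alternating word in $\beta^{\pm 1}$ and $\gamma$ acts nontrivially on some invariant (for instance, by tracking the induced action on isotopy classes of properly embedded arcs in $V_A$ connecting $A^+$ and $A^-$). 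The principal obstacle will be this last verification: ruling out a hidden relation such as $\gamma\beta\gamma = \beta^{\pm 1}$ that would collapse the free product to an infinite dihedral group.
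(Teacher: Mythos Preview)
Your approach is plausible but genuinely different from the paper's. The paper does not cut $V$ along $A$ and analyze a marked genus-$1$ splitting; instead it lets $\mathcal{G}_{\{A\}}$ act on the tree $\mathcal{P}_A\subset\mathcal{D}(W)$ spanned by the dual disks of $A$, checks (via the surgery criterion, Theorem~\ref{thm:surgery}) that $\mathcal{P}_A$ is a tree, observes that the quotient of its barycentric subdivision by $\mathcal{G}_{\{A\}}$ is a single edge, and then reads off $\mathcal{G}_{\{A\}}=\mathcal{G}_{\{A,A'\}}\ast_{\mathcal{G}_{\{A,A',B'\}}}\mathcal{G}_{\{A,A'\cup B'\}}$ from Bass--Serre theory (Theorem~\ref{thm:theorem by Brown}). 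The three small isotropy groups are then computed directly by Alexander's trick.

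The practical difference is exactly the obstacle you flag. In your outline you must \emph{separately} prove generation and then rule out hidden relations like $\gamma\beta\gamma=\beta^{\pm1}$ by a normal-form or ping-pong argument on some set of arcs; the paper's route gets both generation and the free-product-with-amalgamation structure in one stroke from the tree action, with no residual relation-chasing. In fact the invariant you propose to track---isotopy classes of arcs joining $A^+$ to $A^-$---is essentially the vertex set of $\mathcal{P}_A$ (each dual disk of $A$ meets $\partial A$ once and so gives such an arc), so your ping-pong set and the paper's tree are the same object; the paper simply uses it more efficiently via Bass--Serre. One minor point: the scars $A^{\pm}$ on $\partial V_A$ are disks in the boundary torus, not meridian disks of $V_A$, so the phrase ``parallel meridian disks'' and the appeal to ``the mapping class group of a solid torus preserving a pair of parallel meridian disks'' would need to be reformulated before your generation step goes through.
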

\begin{center}
\begin{overpic}[width=14cm, clip]{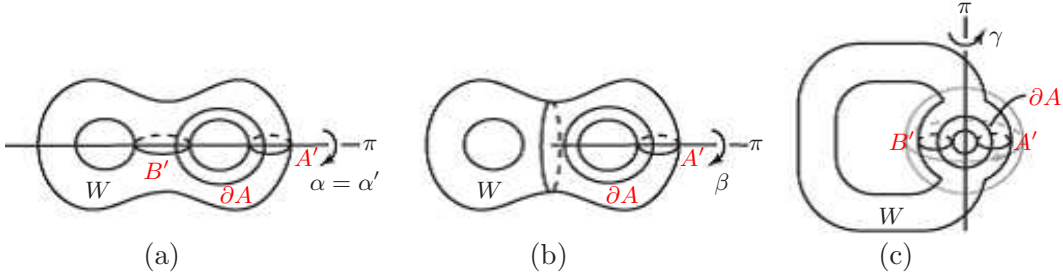}
  \linethickness{3pt}
  \put(52,0){(a)}
  \put(30,25){\footnotesize $W$}
  \put(52,32){\footnotesize {\color{red}$B'$}}
  \put(81,23){\footnotesize {\color{red}$\partial A$}}
  \put(109,37){\footnotesize {\color{red}$A'$}}
  \put(135,43){\footnotesize $\pi$}
  \put(115,28){\footnotesize $\alpha = \alpha'$}

  \put(198,0){(b)}
  \put(178,25){\footnotesize $W$}
  \put(227,23){\footnotesize {\color{red}$\partial A$}}
  \put(255,37){\footnotesize {\color{red}$A'$}}
  \put(281,43){\footnotesize $\pi$}
  \put(268,28){\footnotesize $\beta$}

  \put(330,0){(c)}
  \put(330,15){\footnotesize $W$}
  \put(334,43){\footnotesize {\color{red}$B'$}}
  \put(381,43){\footnotesize {\color{red}$A'$}}
  \put(387,60){\footnotesize {\color{red}$\partial A$}}
  \put(360,95){\footnotesize $\pi$}
  \put(372,84){\footnotesize $\gamma$}
\end{overpic}
\captionof{figure}{Generators of $\mathcal{G}_{\{ A \}}$.}
\label{fig:isotropy_of_a_primitive_disk}
\end{center}
\begin{proof}
Since the argument is almost the same as Lemma 5.1 of \cite{C2},
we explain the outline.
Let $\mathcal{P}_{A}$ be the full-subcomplex of $\mathcal{D} (W)$
spanned by all the dual disks of $A$.
Then we can show that any dual disk of $A$ in $W$ is disjoint from the
unique semiprimitive disk $A_0'$ disjoint from $\partial A$, which implies
that $\mathcal{P}_A$ is 1-dimensional.
Further, $\mathcal P_A$ is a subcomplex of the disk complex for $W$ satisfying the condition in Theorem \ref{thm:surgery}, and hence $\mathcal{P}_{A}$ is a tree.
Let $\mathcal{P}_{A}^\prime$ be a first barycentric subdivision of $\mathcal{P}_{A}$.
Let $A^\prime$ and $B^\prime$ be disjoint dual disks of $A$.
The quotient of $\mathcal{P}_{A}^\prime$ by the action of
$\mathcal{G}$ is a single edge.
It follows from Theorem \ref{thm:theorem by Brown} that $\mathcal{G}_{\{A\}} =
\mathcal{G}_{\{A , A^\prime\}} *_{\mathcal{G}_{\{A , A^\prime, B^\prime\}}}
\mathcal{G}_{\{ A , A^\prime \cup B^\prime \}}$.
An easy computation shows the following:
\begin{itemize}
\item
$\mathcal{G}_{\{A , A^\prime\}} =
\langle \alpha \mid \alpha^2 \rangle
\oplus
\langle \beta \mid - \rangle $, where
$\alpha$ is the hyperelliptic involution of both $V$ and $W$, and
$\beta$ is the half-twist along the reducing sphere
$\partial ( \Nbd ( A \cup A^\prime ) )$;
see Figure \ref{fig:isotropy_of_a_primitive_disk}
(a) and (b),
\item
$\mathcal{G}_{\{ A , A^\prime \cup B^\prime \}} =
\langle \alpha^\prime \mid {\alpha^\prime}^2 \rangle
 \oplus \
\langle \gamma \mid \gamma^2 \rangle
$, where
$\alpha^\prime$ is the hyperelliptic involution of both $V$ and $W$, and
$\gamma$ exchanges $A^\prime$ and $B^\prime$;
see Figure \ref{fig:isotropy_of_a_primitive_disk}
(a) and (c),
\item
$\mathcal{G}_{\{A , A^\prime, B^\prime \}} =
\langle \alpha \mid \alpha^2 \rangle $, where
$\alpha$ is the hyperelliptic involution of both $V$ and $W$;
see Figure \ref{fig:isotropy_of_a_primitive_disk} (a).
\end{itemize}
Since the unique non-trivial element $\alpha$
of $\mathcal{G}_{\{A , A^\prime, B^\prime \}}$ provides
a relation $\alpha = \alpha^\prime$ in
the free product
$\mathcal{G}_{\{A , A^\prime\}} *
\mathcal{G}_{\{ A , A^\prime \cup B^\prime \}}$,
we obtain the required presentation of
$\mathcal{G}_{\{A\}}$.
\end{proof}

\begin{lemma}
\label{lem:stabilizers of primitive pairs}
Suppose that $p \geq 3$.
Let $\{A, B\}$ be an edge of the primitive disk complex $\mathcal P(V)$.
Then we have $\mathcal{G}_{ \{ A, B \} } = \langle \alpha \mid \alpha^2 \rangle$.
If the two end points of the edge $\{ A, B \}$
can be exchanged by the action of $\mathcal{G}$,
then we have
$\mathcal{G}_{ \{ A \cup B \} } =
\langle \alpha \mid \alpha^2 \rangle \oplus
\langle \sigma \mid \sigma^2 \rangle$, where $\sigma$ is an element of $\mathcal G$ exchanging $A$ and $B$.
Otherwise, we have
$\mathcal{G}_{ \{ A \cup B \} } =
\langle \alpha \mid \alpha^2 \rangle$.
\end{lemma}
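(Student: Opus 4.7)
I plan to adapt the Bass--Serre strategy used in Lemma~\ref{lem:stabilizer of a primitive disk}, in conjunction with a rigidity analysis of how the generators of $\mathcal{G}_{\{A\}}$ act on $B$.

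First, for the assertion $\mathcal{G}_{\{A, B\}} = \langle \alpha \mid \alpha^2 \rangle$: the inclusion $\supseteq$ is immediate, since the hyperelliptic involution preserves every essential disk of $V$ setwise. For the reverse, note that $\mathcal{G}_{\{A, B\}}$ is a subgroup of $\mathcal{G}_{\{A\}} = \langle \alpha \mid \alpha^2 \rangle \oplus \langle \beta, \gamma \mid \gamma^2 \rangle$. Writing any element as $\alpha^i w$ with $w$ in the free product $\langle \beta, \gamma \mid \gamma^2 \rangle$, it suffices to show no non-trivial $w$ preserves the isotopy class of $B$. To make this precise, I would pick disjoint dual disks $A'$ of $A$ and $B'$ of $B$ in $W$ (available by standard disk-surgery arguments using that $\{A,B\}$ is a complete meridian system of $V$, being part of a shell by Lemma~\ref{lem:shell}), and consider the subcomplex of $\mathcal D(W)$ spanned by all such pairs. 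This is a tree by Theorem~\ref{thm:surgery}, and $\mathcal{G}_{\{A, B\}}$ acts on it. The vertex stabilizer $\mathcal{G}_{\{A, B, A' \cup B'\}}$ is computed by cutting $L$ along $A \cup B \cup A' \cup B'$ together with the semiprimitive disk in $V$ disjoint from $A \cup B$: the result is a $3$-ball, and by Alexander's trick the group of self-homeomorphisms fixing all the marked boundary disks is trivial, so the stabilizer reduces to $\langle \alpha \rangle$. An analogous analysis shows that every edge stabilizer is $\langle \alpha \rangle$, and Theorem~\ref{thm:theorem by Brown} then gives $\mathcal{G}_{\{A, B\}} = \langle \alpha \mid \alpha^2 \rangle$.

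For the assertion about $\mathcal{G}_{\{A \cup B\}}$, consider the short exact sequence
\begin{equation*}
1 \longrightarrow \mathcal{G}_{\{A, B\}} \longrightarrow \mathcal{G}_{\{A \cup B\}} \longrightarrow S \longrightarrow 1,
\end{equation*}
where $S \leq \mathbb{Z}/2$ records whether an element of $\mathcal{G}_{\{A \cup B\}}$ swaps $A$ and $B$. If the two endpoints of the edge $\{A, B\}$ cannot be exchanged by $\mathcal{G}$, then $S$ is trivial and $\mathcal{G}_{\{A \cup B\}} = \mathcal{G}_{\{A, B\}} = \langle \alpha \mid \alpha^2 \rangle$. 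If they can be exchanged, then some element of $\mathcal{G}$ swaps $A$ and $B$; using the uniqueness (up to isotopy) of such a swap together with the geometric symmetry of the splitting, such an exchanging element can be realized as an involution $\sigma \in \mathcal{G}$ with $\sigma(A) = B$, arising as a $\pi$-rotation interchanging the two ``handles'' of $V$ corresponding to $A$ and $B$. Because $\sigma$ and $\alpha$ both come from commuting geometric symmetries of $L$, we have $\sigma \alpha = \alpha \sigma$, and the sequence splits as a direct product, giving $\mathcal{G}_{\{A \cup B\}} = \langle \alpha \mid \alpha^2 \rangle \oplus \langle \sigma \mid \sigma^2 \rangle$.

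The main obstacle lies in the first assertion: one needs to verify that the additional constraint of preserving $B$ kills the entire free factor $\langle \beta, \gamma \mid \gamma^2 \rangle$ of $\mathcal{G}_{\{A\}}$, leaving only $\alpha$. Concretely, this amounts to showing that no Dehn-twist--like element $w$ (generated by the half-twist $\beta$ along the reducing sphere $\partial \mathrm{Nbd}(A \cup A')$ and the involution $\gamma$ swapping a pair of dual disks) fixes the isotopy class of the second primitive disk $B$, and then that the associated tree stabilizers are all $\langle \alpha \rangle$ via Alexander's trick on an appropriately cut $3$-ball. The bookkeeping of this cut-and-paste argument, in particular keeping track of how the various semiprimitive and dual disks interact depending on whether $\{A, B\}$ admits a common dual disk, will be the most delicate part of the proof.
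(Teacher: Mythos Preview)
Your approach diverges substantially from the paper's, and in doing so it misses the key simplifying idea: shell uniqueness (Lemma~\ref{lem:shell}). The paper's argument for $\mathcal{G}_{\{A,B\}} = \langle \alpha \rangle$ is entirely direct. Given $f \in \mathcal{G}_{\{A,B\}}$, the shell $\mathcal{S}_B$ centered at $B$ containing $A$ is unique, so $f$ must fix every disk $B_j$ in it, and hence also the unique dual disk $B'$ of $B$ disjoint from $B_0$ and the semiprimitive disk $B'_0$ in $W$. At that point one has a complete collection of disks in both $V$ and $W$ whose complement is a ball, and Alexander's trick finishes: $f$ is either the identity or $\alpha$, according to whether it preserves or reverses the orientation of $B$. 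There is no need to invoke Bass--Serre a second time, nor to analyze how $\beta$ and $\gamma$ act on $B$.

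Your proposed route has two concrete gaps. First, the auxiliary complex you build in $\mathcal{D}(W)$ (``spanned by all such pairs'' of disjoint dual disks $A', B'$) is not clearly defined, and you have not justified that it is a tree or that Theorem~\ref{thm:surgery} applies to it; indeed, when $\{A,B\}$ has no common dual disk it is not even obvious that disjoint dual disks $A'$ and $B'$ exist. Second, for $\mathcal{G}_{\{A \cup B\}}$ you assert that the swapping element $\sigma$ can be realized as a geometric $\pi$-rotation commuting with $\alpha$. This is plausible for pairs with a common dual disk, but for type-$0$ edges it is not evident, and you give no argument. The paper instead proves $\sigma^2 = 1$ algebraically: $\sigma$ carries the shell $\mathcal{S}_B$ to the shell $\mathcal{S}_A$, and a compatible-orientation argument on the triples $\{B, B_{i-1}, B_i\}$ and $\{A, A_{i-1}, A_i\}$ shows that $\sigma|_B$ and $\sigma|_A$ have the same orientation behavior, forcing $\sigma^2 \in \mathcal{G}_{\{A,B\}}$ to be trivial rather than $\alpha$. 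The direct-sum structure then follows because any two swapping elements differ by an element of $\mathcal{G}_{\{A,B\}} = \langle \alpha \rangle$.
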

\begin{proof}
Let $\{ A,B \}$ be an edge of $\mathcal P(V)$.
Then by Lemma \ref{lem:shell} there exists
a unique shell $\mathcal{S}_B = \{B_0 , B_1 , \ldots , B_p \}$
centered at $B$ containing $A$ such that $A$ is one of $B_0, B_1, \cdots, B_p$.
Without loss of generality, we may assume that
$A = B_i$, where $1 \leq i < p/2$.
(We assumed $p \geq 3$.)
Let $f$ be an element of $\mathcal{G}_{ \{ A, B \} }$.
By the uniqueness of the shell, we have
$f(B_j) = B_j$ for $0 \leq j \leq p$.
Let $B'$ be the unique dual disk of $B$ disjoint from $B_0$,
and let $B_0'$ be the unique semi-primitive disk disjoint from $B$ as in
the proof of Lemma \ref{lem:shell}.
Then again by the uniqueness of the shell, we have
$f(B') = B'$ and $f(B_0') = B_0'$.
If $f$ preserves an orientation of $B$, then $f$ preserves orientations
of all of $B_j$, $B'$ and $B_0'$ since
$\{B , B_{j-1}, B_j\}$ is a triple of pairwise disjoint disks cutting $V$ into two 3-balls.
Then by Alexander's trick,
$f$ is the trivial element of $\mathcal{G}$.
If $f$ reverses an orientation of $B$, then $f$ reverses orientations
of all of $B_j$, $B'$ and $B_0'$.
Then again by Alexander's trick,
$f$ is the hyperelliptic involution $\alpha$.

If the two end points of the edge $\{ A, B \}$
cannot be exchanged by the action of $\mathcal{G}$,
it is clear that
$\mathcal{G}_{ \{ A \cup B \} } = \mathcal{G}_{ \{ A , B \} } =
\langle \alpha \mid \alpha^2 \rangle$.

Suppose that there exists an element
$\sigma \in \mathcal{G}$ that exchanges
the two end points of the edge $\{ A, B \}$.
In this case,
by Lemma \ref{lem:shell} there exists
a unique shell $\mathcal{S}_A = \{ A_0 , A_1 , \ldots , A_p \}$
centered at $A$ containing $B$ such that $B = A_i$.
Using the triple $\{B , B_{i-1}, B_i\}$, we may put {\it compatible} orientations on
$B$, $B_{j-1}$ and $B_j = A$ in a sense that
the orientations are coming from an orientation of $V$ cut off by
$B \cup B_{i-1} \cup B_i$.
We may also put an orienation on $A_{i-1}$ so that
the triple $\{A , A_{i-1}, A_i\}$ with the pre-fixed orientations on $A$ and $A_j = B$ are compatible.
Since $\sigma$ maps the shell $\mathcal{S}_B = \{B_0 , B_1 , \ldots , B_p \}$
to the shell $\mathcal{S}_A$ we see that
$\sigma \mid _B : B \to A$ is orientation-presering if and only if
so is $\sigma \mid _A : A \to B$.
This implies that $\sigma^2 = 1 \in \mathcal{G}$.
Let $\sigma_1$ and $\sigma_2$ be elements of $\mathcal{G}$ that
interchanges $D$ and $E$.
Then $\sigma_1 \sigma_2 = 1 $ or $\alpha$.
This implies $\sigma_1 = \sigma_2$ or $\alpha \sigma_1 = \sigma_2$.
Therefore we have $\mathcal{G}_{ \{ A \cup B \} } =
\langle \alpha \mid \alpha^2 \rangle \oplus
\langle \sigma \mid \sigma^2 \rangle$.
\end{proof}

We remark that, in the case of $p = 2$ or $q = 1$, the presentations of $\mathcal{G}_{ \{ A, B \} }$ and $\mathcal{G}_{ \{ A, B \} }$ have been obtained in Lemmas 5.2 and 5.3 in \cite{C2}.
Using the presentations of the isotropy groups, we have the following main theorem:

\begin{theorem}
\label{thm:presentations of the Goeritz groups}
The genus-$2$ Goeritz group $\mathcal{G}$ of a lens space $L(p, q)$, $1 \leq q \leq p/2$, with
$p \equiv \pm 1 \pmod q$
has the following presentations:
\begin{enumerate}
\item
\label{item:q1}
If $q=1$, then we have:
\begin{enumerate}
\item
\label{item:(2,1)}
$\langle \beta , \rho , \gamma \mid
{\rho}^4 , {\gamma}^2 , (\gamma \rho)^2, \rho^2 \beta \rho^2 \beta^{-1} \rangle$ if $p = 2$;
\item
\label{item:(3,1)}
$\langle \alpha \mid \alpha^2 \rangle \oplus \langle \beta , \delta , \gamma \mid
{\delta}^3 , {\gamma}^2 , (\gamma \delta)^2 \rangle$ if $p = 3$;
\item
\label{item:(4,1)}
$\langle \alpha \mid \alpha^2 \rangle \oplus \langle \beta, \gamma, \sigma \mid
{\gamma}^2, {\sigma}^2 \rangle$ if $p \geq 4$;
\end{enumerate}
\item
\label{item:q2}
If $q>1$, then we have:
\begin{enumerate}
\item
\label{item:(5,2)}
$\langle \alpha \mid \alpha^2 \rangle \oplus \langle \beta_1, \beta_2, \gamma_1, \gamma_2 \mid
{\gamma_1}^2, {\gamma_2}^2 \rangle$ if $p = 5$;
\item
\label{item:(7,2)}
$\langle \alpha \mid \alpha^2 \rangle \oplus \langle \beta_1, \beta_2, \gamma_1, \gamma_2, \sigma \mid
{\gamma_1}^2, {\gamma_2}^2, \sigma^2 \rangle$
if $p = 2q + 1$ and $q \geqslant 3$, or $p>5$ and $q=2$;
\item
\label{item:(8,3)}
$\langle \alpha \mid \alpha^2 \rangle \oplus \langle \beta, \gamma,
\sigma_1, \sigma_2 \mid
{\gamma}^2, {\sigma_1}^2, {\sigma_2}^2 \rangle$ if $q^2 \equiv 1 \pmod p$;
\item
\label{item:(10,3)}
$\langle \alpha \mid \alpha^2 \rangle \oplus \langle \beta_1, \beta_2, \gamma_1, \gamma_2,
\sigma_1, \sigma_2 \mid
{\gamma_1}^2, {\gamma_2}^2, {\sigma_1}^2, {\sigma_2}^2 \rangle$ otherwise.
\end{enumerate}
\end{enumerate}
\end{theorem}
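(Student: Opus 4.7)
The plan is to apply the Bass-Serre Theorem (Theorem \ref{thm:theorem by Brown}) to the action of $\mathcal{G}$ on a $\mathcal{G}$-invariant tree $\mathcal{T}$ derived from the primitive disk complex $\mathcal{P}(V)$. The required inputs are already in place: Lemmas \ref{thm:number of orbits of primitive disks} and \ref{lem:number of orbits of primitive pairs} compute the vertex and edge orbits, while Lemmas \ref{lem:stabilizer of a primitive disk} and \ref{lem:stabilizers of primitive pairs} compute the vertex, edge and pair stabilizers. In each subcase I would identify a fundamental domain $\mathcal{L}$, read off the relevant stabilizers, and form the amalgamated product.

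When $\mathcal{P}(V)$ is itself a tree --- subcases (1)(c), (2)(c) and (2)(d) of Theorem \ref{thm:structure} --- I would take $\mathcal{T}$ to be its first barycentric subdivision, in order to remove the edge inversions permitted by Lemma \ref{lem:number of orbits of primitive pairs}. The fundamental domain is then a short path whose vertices correspond to the representative disk $E$ (and also $D$ when there are two vertex orbits) together with the midpoints of the representative edges. In subcase (1)(c) for example, $\mathcal{L}$ is a single edge from $E$ to the midpoint of $\{E,D\}$, and amalgamating $\mathcal{G}_{\{E\}} = \langle\alpha\mid\alpha^2\rangle \oplus \langle\beta,\gamma\mid\gamma^2\rangle$ with $\mathcal{G}_{\{E\cup D\}} = \langle\alpha\mid\alpha^2\rangle \oplus \langle\sigma\mid\sigma^2\rangle$ over $\mathcal{G}_{\{E,D\}} = \langle\alpha\mid\alpha^2\rangle$ identifies $\alpha$ across the amalgamation and yields $\langle\alpha\mid\alpha^2\rangle \oplus \langle\beta,\gamma,\sigma\mid\gamma^2,\sigma^2\rangle$. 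Subcases (2)(c) and (2)(d) differ only in whether the number of vertex and edge orbits is one or two, as dictated by Lemma \ref{thm:number of orbits of primitive disks} via $q^2 \equiv 1 \pmod p$ or not; accordingly, the generators $\beta,\gamma,\sigma$ either appear once or are duplicated. The degenerate case (1)(a) with $p=2$ must be handled separately along the lines of \cite{C2}: here Theorem \ref{thm:common_dual} furnishes two common dual disks for the pair $\{E,D\}$, and these combine to produce the order-four rotation $\rho$ and the extra relations $\rho^4$, $(\gamma\rho)^2$, $\rho^2\beta\rho^2\beta^{-1}$ appearing in the presentation.

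For subcases (1)(b), (2)(a) and (2)(b), where $\mathcal{P}(V)$ is two-dimensional, I would first replace $\mathcal{P}(V)$ by a $\mathcal{G}$-invariant spine $\mathcal{T}$ inside its barycentric subdivision. Such a spine exists because $\mathcal{P}(V)$ is contractible by Theorem \ref{thm:contractible}, and can be chosen canonically, for instance by joining the barycenter of each $2$-simplex to the three edge-midpoints of its boundary. The fundamental domain $\mathcal{L}$ then acquires one additional ``triple-barycenter'' vertex for each $\mathcal{G}$-orbit of primitive triples. I would compute the stabilizer of such a vertex along the same lines as Lemma \ref{lem:stabilizers of primitive pairs}, using Theorem \ref{thm:triple}(4) to decide when the three disks of the triple are $\mathcal{G}$-equivalent: for $p=3$ all three are equivalent so the stabilizer gains a $\mathbb{Z}/3$ rotation $\delta$, giving presentation (1)(b); for $p \geq 5$ only one pair of the triple admits a common dual disk, so the stabilizer contributes only the involutions $\sigma$, $\sigma_1$, $\sigma_2$ listed in the theorem, giving (2)(a) and (2)(b).

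The main obstacle is the two-dimensional cases. One must pin down the correct $\mathcal{G}$-invariant spine, verify that each orbit of primitive triples (classified by Theorem \ref{thm:triple}) contributes exactly the extra involution or rotation listed in the presentation, and confirm that the Bass-Serre amalgamation across these new vertices introduces no additional relations. The contractibility of $\mathcal{P}(V)$ (Theorem \ref{thm:contractible}) is what legitimises the reduction to a tree in the first place; without it, extra relations from $2$-cell orbits would need to be tracked. Once these stabilizer calculations are complete, assembling the amalgamated product subcase by subcase yields the seven presentations stated.
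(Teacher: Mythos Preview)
Your treatment of the tree cases (1)(a), (1)(c), (2)(c), (2)(d) is essentially the paper's argument: pass to a barycentric subdivision to kill edge inversions, identify the fundamental domain from Lemma~\ref{lem:number of orbits of primitive pairs}, and amalgamate the stabilizers from Lemmas~\ref{lem:stabilizer of a primitive disk} and \ref{lem:stabilizers of primitive pairs}. Minor point: in (2)(d) the paper only subdivides the type-$1$ edges, since the type-$0$ edge $\{E,D\}$ already has its endpoints in distinct orbits, but your full subdivision gives the same amalgam after an obvious simplification.

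The gap is in the $2$-dimensional cases. Your proposed spine, joining the barycenter of each $2$-simplex to its three edge-midpoints, is \emph{not connected}, and hence not a tree on which Bass--Serre can be applied. By Theorem~\ref{thm:structure}, in $L(3,1)$ every edge lies in a unique $2$-simplex, so your graph is a disjoint union of tripods; in $L(5,2)$ the type-$1$ edges lie in a unique $2$-simplex and the type-$0$ edges in exactly two, so the components of your graph are the ``bands'' (distinct bands meet only at vertices of $\mathcal P(V)$, never along edges); in case (2)(b) every edge again lies in at most one $2$-simplex. The underlying reason is that the link of a vertex in $\mathcal P(V)$ is not contractible here, so the dual-tree retraction that works for $\mathcal D(V)$ fails for $\mathcal P(V)$; there is no $\mathcal G$-equivariant way to push the original vertices onto your graph.

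The paper sidesteps this by constructing, case by case, a $\mathcal G$-invariant tree that is \emph{not} a spine in your sense. For $L(3,1)$ it retracts each $2$-simplex to the cone over its three \emph{vertices} (so the original primitive-disk vertices remain and carry the connectivity). For $L(5,2)$ and for case (2)(b) it simply takes the subgraph of type-$0$ edges together with all vertices, which Theorem~\ref{thm:structure} shows is already a tree; for (2)(b) one then subdivides the surviving type-$1$ edges to remove inversions. This choice is what makes the $L(5,2)$ presentation come out with no $\sigma$: the fundamental domain is the single type-$0$ edge $\{E,D\}$, whose endpoints lie in distinct orbits by Lemma~\ref{thm:number of orbits of primitive disks}, giving $\mathcal G = \mathcal G_{\{E\}} *_{\mathcal G_{\{E,D\}}} \mathcal G_{\{D\}}$ directly. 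Your route through triple-barycenters, even once the connectivity issue is repaired, would force you to compute stabilizers of primitive triples and then argue that the resulting extra generators are redundant.
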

\begin{proof}
We use the four primitive disks $E$, $D$, $E_1$ and $D_1$ defined in Section
\ref{subsec:the_primitive_disks_under_the_action_of_the_Goeritz_group}, 
but we use the same symbols $\alpha$, $\beta$, $\gamma$ and $\sigma$ in Lemmas \ref{lem:stabilizer of a primitive disk} and \ref{lem:stabilizers of primitive pairs} for the isotropy subgroups of the disks and their unions in the above.

\smallskip

\noindent (\ref{item:q1}) Since this case of $q = 1$ is already described in \cite{C2}, we briefly sketch the proof.

\smallskip

\noindent \noindent (\ref{item:q1})-(a)
By Theorem \ref{thm:structure}
the primitive disk complex $\mathcal{P}(V)$ for the genus-$2$ Heegaard splitting of $L(2,1)$ is a tree, which is described in Figure \ref{shape} (1)-(a).
Let $\mathcal{T}$ be the first barycentric subdivision
of $\mathcal{P}(V)$.
By Lemma \ref{lem:number of orbits of primitive pairs}
the quotient of ${\mathcal{T}}$ by the action of
$\mathcal{G}$ is a single edge with distinct ends.
By Theorem \ref{thm:theorem by Brown},
we have:
\[ \mathcal{G} =
\mathcal{G}_{\{ E \cup D  \}} *_{\mathcal{G}_{ \{ E, D \} }}
\mathcal{G}_{\{ E \}}.  \]
The presentation in (\ref{item:q1})-(a)
is obtained by computing each of those isotropy groups.

\smallskip

\noindent \noindent (\ref{item:q1})-(b)
By Theorem \ref{thm:structure}
the primitive disk complex $\mathcal{P}(V)$ for the genus-$2$ Heegaard splitting of $L(3,1)$ is a $2$-dimensional complex, which is described in Figure \ref{shape} (2)-(a).
In this case,
there is a deformation retraction of $\mathcal{P}(V)$ that
shrinks each 2-simplex into the cone over its 3 vertices as shown in
Figure \ref{fig:primitive_disk_complex_5}.

\begin{center}
\begin{overpic}[width=8cm, clip]{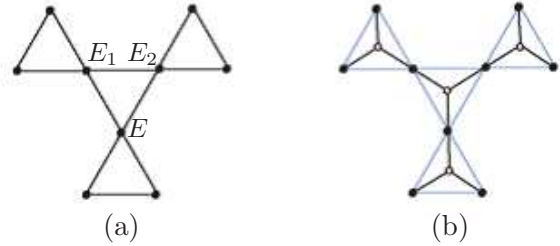}
  \linethickness{3pt}
  \put(44,0){(a)}
  \put(38,66){\small $E_1$}
  \put(53,66){\small $E_2$}
  \put(53,37){\small $E$}
  \put(168,0){(b)}
 \end{overpic}
\captionof{figure}{(a) The primitive disk complex $\mathcal{P}(V)$.
(b) The tree $\mathcal{T}$. }
\label{fig:primitive_disk_complex_5}
\end{center}
Let $\mathcal{T}$ be the resulting complex, which is a tree.
By Lemma \ref{lem:number of orbits of primitive pairs}
the quotient of ${\mathcal{T}}$ by the action of
$\mathcal{G}$ is a single edge with distinct ends.
By Theorem \ref{thm:theorem by Brown},
we have:
\[ \mathcal{G} =
\mathcal{G}_{\{ E \cup E_1 \cup E_2  \}} *_{\mathcal{G}_{ \{ E, E_1 \cup E_2 \} }}
\mathcal{G}_{\{ E \}}.  \]
The presentation in (\ref{item:q1})-(b)
is obtained by computing each of those isotropy groups.

\smallskip

\noindent (\ref{item:q1})-(c)
By Theorem \ref{thm:structure}
the primitive disk complex $\mathcal{P}(V)$ for the genus-$2$ Heegaard splitting of $L(p,1)$, $p>3$, is a tree, which is described in Figure \ref{shape} (1)-(b).
Let $\mathcal{T}$ be the first barycentric subdivision
of $\mathcal{P}(V)$.
By Lemma \ref{lem:number of orbits of primitive pairs}
the quotient of ${\mathcal{T}}$ by the action of
$\mathcal{G}$ is a single edge with distinct ends.
By Theorem \ref{thm:theorem by Brown},
we have:
\[ \mathcal{G} =
\mathcal{G}_{\{ E \cup D  \}} *_{\mathcal{G}_{ \{ E, D \} }}
\mathcal{G}_{\{ E \}}.  \]
The presentation in (\ref{item:q1})-(c)
is obtained by computing each of those isotropy groups.

\smallskip

\noindent (\ref{item:q2}) Suppose that $q>1$.

\smallskip

\noindent (\ref{item:q2})-(a)
By Theorem \ref{thm:structure},
the primitive disk complex $\mathcal{P}(V)$ for the genus-$2$ Heegaard splitting of $L(5,2)$
is a 2-dimensional contractible complex, which is described in Figure \ref{shape} (2)-(b).
A portion of $\mathcal P(V)$ containing the vertices $E$, $D$, $E_1$ and $D_1$ is illustrated
in Figure \ref{fig:primitive_disk_complex_1} (a).
\begin{center}
\begin{overpic}[width=14cm, clip]{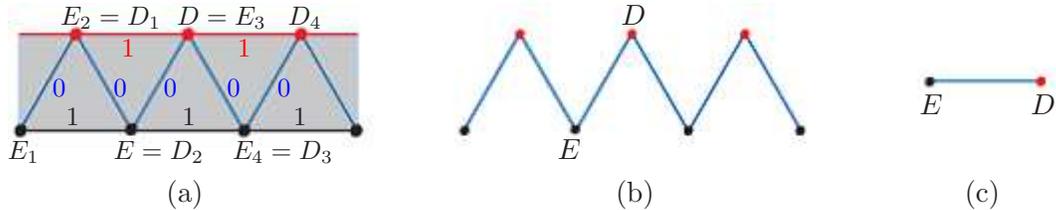}
  \linethickness{3pt}
  \put(60,0){(a)}
  \put(40,16){\small $E=D_2$}
  \put(0,16){\small $E_1$}
  \put(20,67){\small $E_2=D_1$}
  \put(64,67){\small $D=E_3$}
  \put(85,16){\small $E_4=D_3$}
  \put(106,67){\small $D_4$}
  \put(230,0){(b)}
  \put(208,16){$E$}
  \put(232,67){$D$}
  \put(362,0){(c)}
  \put(345,33){$E$}
  \put(387,33){$D$}
  \put(17,40){\color{blue} \small $0$}
  \put(40,40){\color{blue} \small $0$}
  \put(59,40){\color{blue} \small $0$}
  \put(83,40){\color{blue} \small $0$}
  \put(102,40){\color{blue} \small $0$}
  \put(22,29){\small $1$}
  \put(66,29){\small $1$}
  \put(108,29){\small $1$}
  \put(43,55){\color{red} \small $1$}
  \put(87,55){\color{red} \small $1$}
 \end{overpic}
\captionof{figure}{(a) The primitive disk complex $\mathcal{P}(V)$.
(b) The tree $\mathcal{T}$. (c) The quotient $\mathcal{T} / \mathcal{G}$.}
\label{fig:primitive_disk_complex_1}
\end{center}
We recall that
each 2-simplex of $\mathcal{P}(V)$ contains exactly two edges of type-0 (both of which are elements of
$\mathcal{G} \cdot \{ E, E_1 \}$)
and one edge of type-1 (which is an element of $\mathcal{G} \cdot \{ E, D \}$).
We observe that the subcomplex of $\mathcal P(V)$ which consists only of the type-0 edges with the vertices is a tree, which we denote by $\mathcal T$.
See Figure \ref{fig:primitive_disk_complex_1} (b).
By Lemma \ref{lem:number of orbits of primitive pairs}
the Goeritz group $\mathcal{G}$
acts without inversion on the edges of
$\mathcal{T}$ and
the two endpoints of each edge belong to different
orbits of vertices under the action of $\mathcal{G}$.
Moreover, the action is transitive on the set of the edges of $\mathcal{T}$.
Hence the quotient of ${\mathcal{T}}$ by the action of
$\mathcal{G}$ is a single edge, see
Figure \ref{fig:primitive_disk_complex_1} (c).
By Theorem \ref{thm:theorem by Brown},
we have:
\[ \mathcal{G} =
\mathcal{G}_{\{ E  \}} *_{\mathcal{G}_{ \{ E, D \} }}
\mathcal{G}_{\{ D \}}  . \]
By Lemmas \ref{lem:stabilizer of a primitive disk} and
\ref{lem:stabilizers of primitive pairs}, we get the presentation in (\ref{item:q2})-(a).

\smallskip

\noindent (\ref{item:q2})-(b)
Let $L(p,q)$ be a lens space such that
$p = 2q + 1$ and $q \geqslant 3$, or $p>5$ and $q=2$.
By Theorem \ref{thm:structure}
the primitive disk complex $\mathcal{P}(V)$ is
a 2-dimensional contractible complex, which is described in Figure \ref{shape} (2)-(c).
A portion of $\mathcal P(V)$ containing the vertices $E$, $D$, $E_1$ and $D_1$ is illustrated
in Figure \ref{fig:primitive_disk_complex_2} (a).
\begin{center}
\begin{overpic}[width=14cm, clip]{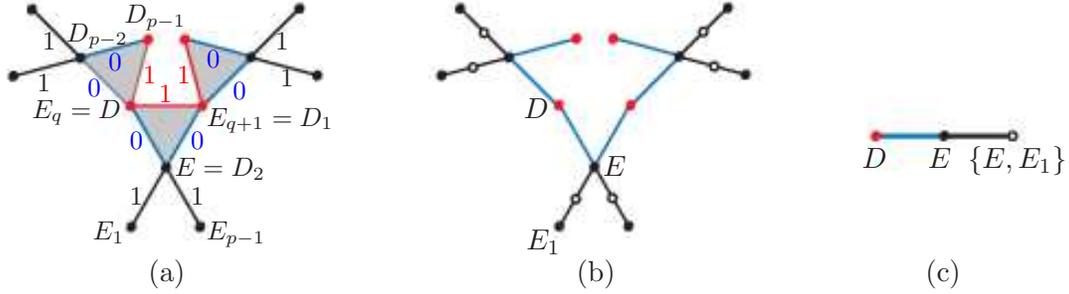}
  \linethickness{3pt}
  \put(60,0){(a)}
  \put(39,16){\small $E_1$}
  \put(70,40){\small $E = D_2$}
  \put(16,62){\small $E_q = D$}
  \put(82,59){\small $E_{q+1}=D_1$}
  \put(82,16){\small $E_{p-1}$}
  \put(29,91){\small $D_{p-2}$}
  \put(51,98){\small $D_{p-1}$}
  \put(222,0){(b)}
  \put(232,40){$E$}
  \put(202,62){$D$}
  \put(203,12){$E_1$}
  \put(354,0){(c)}
  \put(330,43){$D$}
  \put(355,43){$E$}
  \put(370,43){$\{ E , E_1 \}$}
  \put(18,72){\small $1$}
  \put(20,88){\small $1$}
  \put(110,74){\small $1$}
  \put(108,88){\small $1$}
  \put(53,29){\small $1$}
  \put(76,29){\small $1$}
  \put(53,50){\small \color{blue} $0$}
  \put(76,50){\small \color{blue} $0$}
  \put(37,70){\small \color{blue} $0$}
  \put(45,80){\small \color{blue} $0$}
  \put(92,69){\small \color{blue} $0$}
  \put(82,81){\small \color{blue} $0$}
  \put(58,75){\color{red} \small $1$}
  \put(71,75){\color{red} \small $1$}
  \put(64,68){\color{red} \small $1$}
\end{overpic}
\captionof{figure}{(a) The primitive disk complex $\mathcal{P}(V)$.
(b) The tree $\mathcal{T}'$. (c) The quotient $\mathcal{T}' / \mathcal{G}$.}
\label{fig:primitive_disk_complex_2}
\end{center}
In this case
each 2-simplex of $\mathcal{P}(V)$
contains exactly one edge of type-1
(which is an element of $\mathcal{G} \cdot \{ D, D_1 \}$)
and two edges of type-0 (both of which are elements of $\mathcal{G} \cdot \{ E, D \}$).
Substituting each $2$-simplex of $\mathcal P(V)$ by the union of the two edges of type-$0$ with their vertices in the $2$-simplex, we have a subcomplex of $\mathcal P(V)$, which is a tree.
We denote it by $\mathcal{T}$.
Let $\mathcal{T}'$ be the tree obtained from $\mathcal T'$ by adding the barycenter of each of the remaining edges of type-$1$.
See Figure \ref{fig:primitive_disk_complex_2} (b).
By Lemma \ref{lem:number of orbits of primitive pairs}
the Goeritz group $\mathcal{G}$
acts without inversion on the edges of
$\mathcal{T}'$ and
the two endpoints of each edge belong to different
orbits of vertices under the action of $\mathcal{G}$.
Moreover the complex $\mathcal{T}'$ modulo the action of $\mathcal{G}$
consists of exactly three vertices and two edges.
Hence the quotient of $\mathcal{T}'$ by the action of
$\mathcal{G}$ is the path graph on three vertices,
that is,
the tree with 3 vertices containing only vertices of degree 1 or 2.
See Figure \ref{fig:primitive_disk_complex_2} (c).
By Theorem \ref{thm:theorem by Brown},
we have
\[ \mathcal{G} =
\mathcal{G}_{\{ D  \}} *_{\mathcal{G}_{ \{ E, D \} }}
\mathcal{G}_{\{ E \}} *_{\mathcal{G}_{ \{ E , E_1 \} }}
\mathcal{G}_{\{ E \cup E_1 \}} . \]
By Lemmas \ref{lem:stabilizer of a primitive disk} and
\ref{lem:stabilizers of primitive pairs}, we obtain
the presentation in (\ref{item:q2})-(b).

\smallskip

\noindent (\ref{item:q2})-(c)
Let $L(p,q)$ be a lens space such that
$q^2 \equiv 1 \pmod p$ and $q \geqslant 3$.
By Theorem \ref{thm:structure}
the primitive disk complex $\mathcal{P}(V)$ is a tree, which is described in Figure \ref{shape} (1)-(c).
A portion of $\mathcal P(V)$ containing the vertices $E$, $D$, $E_1$ and $D_1$ is illustrated
in Figure \ref{fig:primitive_disk_complex_3} (a).
\begin{center}
\begin{overpic}[width=14cm, clip]{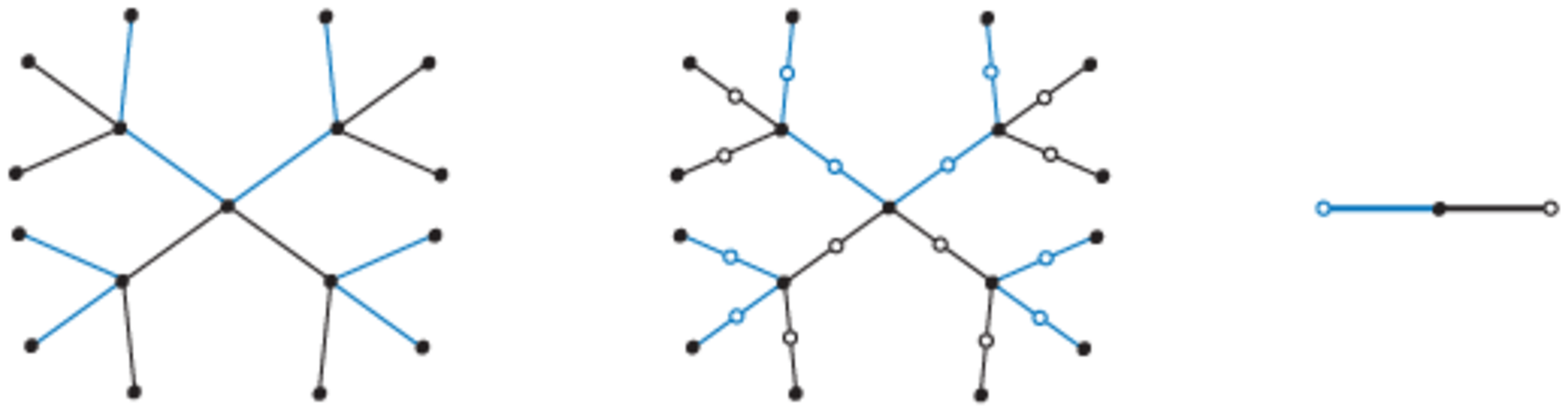}
  \linethickness{3pt}
  \put(56,0){(a)}
  \put(40,42){\small $E_1$}
  \put(-3,72){\small $D_1$}
  \put(69,64){\small $E = D_q$}
  \put(40,85){\small $D = E_q$}
  \put(65,45){\small $E_{p-1}$}
  \put(0,106){\small $D_{p-1}$}
  \put(42,111){\small $D_{p-q}$}
  \put(97,83){\small $E_{p-q}$}
  \put(218,0){(b)}
  \put(232,63){$E$}
  \put(203,85){$D$}
  \put(354,0){(c)}
  \put(315,52){$\{E, D\} $}
  \put(355,52){$E$}
  \put(373,52){$\{ E , E_1 \}$}
  \put(23,71){\small $1$}
  \put(20,87){\small $1$}
  \put(100,71){\small $1$}
  \put(97,95){\small $1$}
  \put(41,29){\small $1$}
  \put(80,29){\small $1$}
  \put(44,57){\small $1$}
  \put(78,56){\small $1$}
  \put(44,70){\small \color{blue} $0$}
  \put(79,70){\small \color{blue} $0$}
  \put(41,95){\small \color{blue} $0$}
  \put(82,95){\small \color{blue} $0$}
  \put(25,31){\small \color{blue} $0$}
  \put(20,46){\small \color{blue} $0$}
  \put(96,31){\small \color{blue} $0$}
  \put(103,46){\small \color{blue} $0$}
\end{overpic}
\captionof{figure}{(a) The primitive disk complex $\mathcal{P}(V)$.
(b) The tree $\mathcal{T}'$. (c) The quotient $\mathcal{T}' / \mathcal{G}$.}
\label{fig:primitive_disk_complex_3}
\end{center}
Let $\mathcal{T}$ be the first barycentric subdivision
of $\mathcal{P}(V)$.
See Figure \ref{fig:primitive_disk_complex_3}  (b).
By Lemma \ref{lem:number of orbits of primitive pairs}
the Goeritz group $\mathcal{G}$
acts without inversion on the edges of
$\mathcal{T}$ and
the two endpoints of each edge belong to different
orbits of vertices under the action of $\mathcal{G}$.
Moreover the complex $\mathcal{T}$ modulo the action of $\mathcal{G}$
consists of exactly three vertices and two edges.
Hence the quotient of ${\mathcal{T}}$ by the action of
$\mathcal{G}$ is the path graph on three vertices.
See Figure \ref{fig:primitive_disk_complex_3} (c).
By Theorem \ref{thm:theorem by Brown},
we have:
\[ \mathcal{G} =
\mathcal{G}_{\{ E \cup D  \}} *_{\mathcal{G}_{ \{ E, D \} }}
\mathcal{G}_{\{ E \}} *_{\mathcal{G}_{ \{ E , E_1 \} }}
\mathcal{G}_{\{ E \cup E_1 \}} . \]
By Lemmas \ref{lem:stabilizer of a primitive disk} and
\ref{lem:stabilizers of primitive pairs}, we obtain the presentation in (\ref{item:q2})-(c).

\smallskip

\noindent (\ref{item:q2})-(d)
Let $L(p,q)$ be a lens space such that
$q > 1$, $p \equiv \pm 1 \pmod q$, and
homeomorphic to none of the above.
We assume that $p \equiv 1 \pmod q$.
The argument for the case where $p \equiv -1 \pmod q$ is the same.
By Theorem \ref{thm:structure}
the primitive disk complex $\mathcal{P}(V)$ is a tree, which is described in Figure \ref{shape} (1)-(c) again.
A portion of $\mathcal P(V)$ containing the vertices $E$, $D$, $E_1$ and $D_1$ is illustrated
in Figure \ref{fig:primitive_disk_complex_4} (a).
\begin{center}
\begin{overpic}[width=14cm, clip]{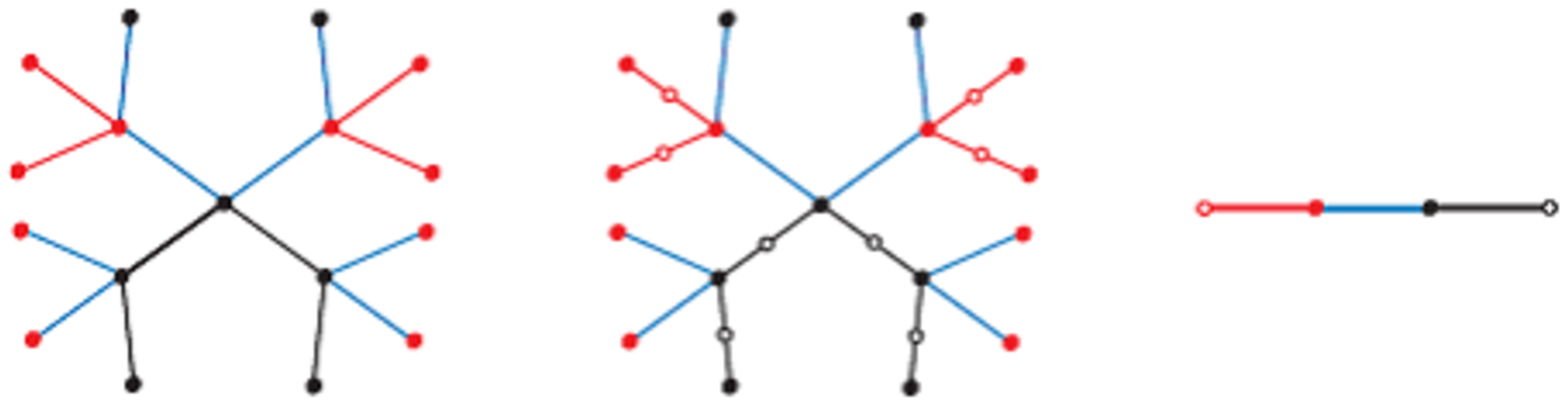}
  \linethickness{3pt}
  \put(53,0){(a)}
  \put(36,41){\small $E_1$}
  \put(-7,72){\small $D_1$}
  \put(64,63){\small $E = D_q$}
  \put(36,85){\small $D = E_{q'}$}
  \put(60,45){\small $E_{p-1}$}
  \put(-4,106){\small $D_{p-1}$}
  \put(38,111){\small $D_{p-q}$}
  \put(94,83){\small $E_{p-q'}$}
  \put(200,0){(b)}
  \put(213,62){$E$}
  \put(185,84){$D$}
  \put(339,0){(c)}
  \put(285,51){$\{ D , D_1 \}$}
  \put(326,51){$D$}
  \put(354,51){$E$}
  \put(374,51){$\{ E , E_1 \}$}
  \put(19,71){\small \color{red} $1$}
  \put(16,87){\small \color{red} $1$}
  \put(96,71){\small \color{red} $1$}
  \put(93,95){\small \color{red} $1$}
  \put(37,29){\small $1$}
  \put(76,29){\small $1$}
  \put(39,57){\small $1$}
  \put(75,56){\small $1$}
  \put(40,70){\small \color{blue} $0$}
  \put(75,70){\small \color{blue} $0$}
  \put(37,95){\small \color{blue} $0$}
  \put(78,95){\small \color{blue} $0$}
  \put(21,31){\small \color{blue} $0$}
  \put(16,46){\small \color{blue} $0$}
  \put(92,31){\small \color{blue} $0$}
  \put(99,46){\small \color{blue} $0$}
\end{overpic}
\captionof{figure}{(a) The primitive disk complex $\mathcal{P}(V)$.
(b) The tree $\mathcal{T}'$. (c) The quotient $\mathcal{T}' / \mathcal{G}$.}
\label{fig:primitive_disk_complex_4}
\end{center}
Let $\mathcal{T}$ be the tree obtained from $\mathcal P(V)$ by
adding the barycenter of each edge of type-1
(which is an element of $\mathcal{G} \cdot \{ E, E_1 \}$ or
$\mathcal{G} \cdot \{ D, D_1 \}$).
See Figure \ref{fig:primitive_disk_complex_4} (b).
By Lemma \ref{lem:number of orbits of primitive pairs}
the Goeritz group $\mathcal{G}$
acts without inversion on the edges of
$\mathcal{T}$ and
the two endpoints of each edge belong to different
orbits of vertices under the action of $\mathcal{G}$.
Moreover the complex $\mathcal{T}$ modulo the action of $\mathcal{G}$
consists of exactly four vertices and three edges.
Hence the quotient of ${\mathcal{T}}$ by the action of
$\mathcal{G}$ is the path graph on four vertices.
See Figure \ref{fig:primitive_disk_complex_4} (c).
By Theorem \ref{thm:theorem by Brown},
we have:
\[ \mathcal{G} =
\mathcal{G}_{\{ D \cup D_1  \}} *_{\mathcal{G}_{ \{ D, D_1 \} }}
\mathcal{G}_{\{ D \}} *_{\mathcal{G}_{ \{ E , D \} }}
\mathcal{G}_{\{ E \}} *_{\mathcal{G}_{ \{ E , E_1 \} }}
\mathcal{G}_{\{ E \cup E_1 \}} . \]
By Lemmas \ref{lem:stabilizer of a primitive disk} and
\ref{lem:stabilizers of primitive pairs}, we obtain the presentation in (\ref{item:q2})-(d).
\end{proof}

\smallskip
\noindent {\bf Acknowledgments.}
The authors wish to express their gratitude to Darryl McCullough for helpful discussions with his valuable advice and comments.
Part of this work was carried out while the second-named author was visiting
Universit\`a di Pisa as a
JSPS Postdoctoral Fellow for Reserch Abroad.	
He is grateful to the university and its staffs for
the warm hospitality.

\bibliographystyle{amsplain}

\end{document}